 \def\cocoa{{\hbox{\rm C\kern-.13em o\kern-.07em C\kern-.13em o\kern-.15em A}}}
\newtheorem{theorem}{Theorem}[section]
\newtheorem{lemma}[theorem]{Lemma}
\newtheorem{proposition}[theorem]{Proposition}
\newtheorem{remark}[theorem]{Remark}
\theoremstyle{definition}
\newtheorem{definition}[theorem]{Definition}
\newcommand {\Hom}{\mathrm{Hom}}
\newcommand {\Ext}{\mathrm{Ext}}
\newcommand {\im}{\mathrm{im}}
\newcommand {\rk}{\mathrm{rk}}
\newcommand {\Hilb}{\mathcal{H}\kern -0.25ex{\mathit ilb\/}}
\newcommand {\bN}{\mathbb{N}}
\newcommand {\bZ}{\mathbb{Z}}
\newcommand {\bP}{\mathbb{P}}
\newcommand{\cC}{{\mathcal C}}
\newcommand{\cE}{{\mathcal E}}
\newcommand{\cM}{{\mathcal M}}
\newcommand{\cN}{{\mathcal N}}
\newcommand{\cO}{{\mathcal O}}
\newcommand{\cI}{{\mathcal I}}
\newcommand{\Pic}{\operatorname{Pic}}
\def\p#1{{\bP^{#1}}}
\def\ga#1{{{\accent"12 #1}}}
\def\mapright#1{\mathbin{\smash{\mathop{\longrightarrow}
\limits^{#1}}}}
\title[rank two aCM bundles]{Rank two aCM bundles on the del Pezzo threefold with Picard number $3$}
\subjclass[2000]{Primary 14J60; Secondary 14J45}
\keywords{}
\author[G. Casnati, D. Faenzi, F. Malaspina]{Gianfranco Casnati, Daniele Faenzi, Francesco Malaspina}
\thanks{All the authors are members of GRIFGA--GDRE project, supported by CNRS
  and INdAM, and of the GNSAGA group of INdAM. The first and third authors are supported by the framework of PRIN 2010/11 \lq Geometria delle variet\ga a algebriche\rq, cofinanced by MIUR. The second author is partially supported by ANR GEOLMI contract ANR-11-BS03-0011}
\begin{document}

\begin{abstract}
A del Pezzo threefold $F$ with maximal Picard number is isomorphic to $\p1\times\p1\times\p1$. In the present paper we completely classify locally free sheaves $\cE$ of rank $2$ 
such that $h^i\big(F,\cE(t)\big)=0$ for $i=1,2$ and $t\in\bZ$. Such a classification extends similar  results proved by E. Arrondo and L. Costa regarding del Pezzo threefolds with Picard number $1$.
\end{abstract}

\maketitle

\section{Introduction}
Let  $\p N$ be the $N$-dimensional projective space over an algebraically closed field $k$ of characteristic $0$. A
well--known theorem of Horrocks (see \cite{O--S--S} and the references
therein) states that a locally free sheaf $\cE$
on $\p
N$ splits as direct sum of invertible sheaves if and only if it has no
intermediate cohomology, i.e. $h^i\big(\p N,\cE(t)\big)=0$ for $0<i<N$
and $t\in\bZ$. 

It is thus natural to 
ask for the meaning of such a vanishing on other
kind of algebraic varieties $F$. Such a vanishing makes
sense only if a natural polarization is defined on $F$.
For instance, if there
is a natural embedding $F\subseteq\p N$, then one can consider
$\cO_F(h):=\cO_{\p N}(1)\otimes\cO_F$ and we can ask for locally free
sheaves $\cE$ on $F$ such that $  H^i_*\big(F,\cE\big):=\bigoplus_{t\in\bZ}H^i\big(F,\cE(t h)\big)=0$ for
$0<i<\dim(F)$ which are called arithmetically
Cohen-Macaulay (aCM for short) bundles. We are particularly interested in characterizing indecomposable aCM bundles, i.e. bundles of rank $r\ge2$ which do not split as sum of invertible sheaves.
Among aCM bundles, there are bundles $\cE$ such that $H^0_*\big(F,\cE\big)$ has the highest possible number of
generators in degree $0$. After \cite{Ul} such bundles are simply called Ulrich bundles. Ulrich bundles have many good properties, thus their description is of particular interest.

There are a lot of classical and recent papers devoted to the aforementioned
topics. For example, in the case of smooth quadrics and Grassmannians,
there are some classical result generalizing 
Horrocks' criterion by adding suitable vanishing to $\cE$ to force splitting
(see \cite{Kno}, \cite{A--O}, \cite{A--M}, and \cite{Ot}).
More recently, several authors considered cohomological splitting
criteria on hypersurfaces or Segre products, most notably for the first non-trivial
case, namely bundles of rank 2 (see
e.g. \cite{Ma1}, \cite{Ma2}, \cite{Ma3}, \cite{C--M1}, \cite{Ma4},
\cite{C--M2}, \cite{C--F}, \cite{MK--R--R1}, \cite{MK--R--R2}, \cite{B--M2}).
Ulrich bundles (even of higher rank) have been recently object of deep inspection (see, e.g. \cite{C--H1}, \cite{C--H2}, \cite{C--K--M}, \cite{C--M--P}).

Another case which deserves particular attention is that
of Fano and del Pezzo $n$--folds. We recall that a smooth
$n$--fold $F$ is Fano if its anticanonical sheaf $\omega_F^{-1}$ is
ample (see \cite{I--P} for results about Fano and del
Pezzo varieties). The greatest positive integer $r$ such that
$\omega_F\cong\cO_F(-r h)$ for some ample class $h\in\Pic(F)$ is called
the index of $F$ and one has
 $1\le r\le n+1$.
If $r=n-1$, then $F$ is
called del Pezzo. For such an $F$, the group $\Pic(F)$ is
torsion--free so that $\cO_F(h)$ is uniquely determined.
By definition, the degree of $F$ is the integer $d:=h^n$. In the case $n=3$ several results about aCM bundles on Fano and del Pezzo threefolds can be found in \cite{A--C}, \cite{A--G}, \cite{B--F1}, \cite{Ca}, \cite{Ma3}, \cite{Fa1}, \cite{A--F} for particular values of $d$. 

Let us restrict to the case of del Pezzo threefolds. It is well--known that $1\le
d\le8$. When $d\ge3$ the sheaf $\cO_F(h)$ is actually very ample so
$h$ is the hyperplane class of a natural embedding $F\subseteq\p {d+1}$.
These varieties are classified (see again \cite{I--P}).
For $3\le d\le 5$ and $d=8$, the complete classification of aCM
bundles of rank $2$ on $F$ can be found in \cite{A--C} and \cite{Ca} (where one can also find some generalization to the cases  $d=1,2$).

A fundamental hypothesis in almost all the aforementioned papers is that the varieties always have Picard number
$\varrho(F):=\rk(\Pic(F))=1$. Indeed in this case both $c_1$ and $c_2$
can be handled as integral numbers. 

The aim of the present paper is to give a complete classification of aCM bundle of rank
$2$ on $F:=\p1\times\p1\times\p1\subseteq \p7$. This case is at the opposite
extreme. Indeed we have $\varrho(F)=3$ and all the other del Pezzo threefolds have Picard number at most $2$.

To formulate our main result, we introduce a bit of
notation now. In what follows we will denote by $A(X)$  the Chow ring of a variety $X$, so that $A^r(X)$ denotes the set of  cycles of codimension $r$. 

We have three different projections $\pi_i\colon F\to\p1$ and we denote
by $h_i$ the pull--back in $A^1(F)$ of the class of a point in the
$i$--th copy of $\p1$. The exterior product morphism
$A(\p1)\otimes A(\p1)\otimes A(\p1)\to A(F)$ is an isomorphism (see
\cite{Fu}, Example 8.3.7), thus we finally obtain 
$$
A(F)\cong\bZ[h_1,h_2,h_3]/(h_1^2,h_2^2,h_3^2).
$$
In particular, if $\cE$ is any bundle of rank $2$ on $F$, then we can write $c_1(\cE)=\alpha_1h_1+\alpha_2h_2+\alpha_3h_3$ for suitable integers $\alpha_1,\alpha_2,\alpha_3$. There is an obvious action of the symmetric group of order $3$ on $F$ that lifts to an analogous action on $A(F)$. Thanks to such an action, we can often restrict our attention to the case $\alpha_1\le\alpha_2\le\alpha_3$.

Our first main result is the following.
\medbreak
\noindent
{\bf Theorem A.}
{\it Let $\cE$ be an indecomposable aCM bundle of rank $2$ on $F$ and let $c_1(\cE)=\alpha_1h_1+\alpha_2h_2+\alpha_3h_3$ with $\alpha_1\le\alpha_2\le\alpha_3$. 
Assume that $h^0\big(F,\cE\big)\ne0$ and $h^0\big(F,\cE(-h)\big)=0$ (we briefly say that $\cE$ is initialized).  Then:
  \begin{enumerate}
  \item the zero locus $E:=(s)_0$ of a general section $s\in H^0\big(F,\cE\big)$ has codimension $2$ inside $F$;
  \item either $0\le\alpha_i\le 2$, $i=1,2,3$, or $c_1=h_1+2h_2+3h_3$;
  \item if $c_1=2h$, or $c_1=h_1+2h_2+3h_3$, then $\cE$ is Ulrich.
  \end{enumerate}}
\medbreak

After a basic reminder on aCM bundles in Section \ref{saCM}, we give the proof
of this result, which occupies Sections
\ref{sLowerChern6-1}, \ref{sUpperChern6-1} and \ref{sTheorem}.

In order to better understand the above statement we recall that in \cite{A--C} the authors prove that for each indecomposable, initialized aCM bundle $\cE$ of rank $2$ on del Pezzo threefolds
of degree $d=3,4,5$, the bound $0\le c_1\le2$ holds.
Such a bound remains true also in the cases $d=1,2$.

Thus, from such a viewpoint, the statement of Theorem A can be
read as follows: either $c_1$
satisfies the  {\it standard bound} \lq\lq$0\le c_1\le2$\rq\rq, or $\cE$ is a {\it sporadic bundle}\/, i.e. $c_1=h_1+2h_2+3h_3$.

In Section \ref{sExtremal} we provide a complete classification of
bundles satisfying the standard bound. Then we examine the sporadic
case in Section \ref{sSporadic}. Finally, we will answer to the
natural question of determining which intermediate cases are actually
admissible in Section \ref{sIntermediate}. We summarize this second main result in the following simplified statement (see Theorems \ref{tLine}, \ref{tElliptic}, \ref{tRational}, \ref{tIntermediate} for an expanded and detailed statement).

\medbreak
\noindent
{\bf Theorem B.}
{\it There exists an indecomposable and initalized aCM bundle $\cE$ of rank $2$ on
  $F$ with
$c_1(\cE)=\alpha_1h_1+\alpha_2h_2+\alpha_3h_3$ with $\alpha_1\le\alpha_2\le\alpha_3$ if and only if  $(\alpha_1,\alpha_2,\alpha_3)$ is one of the following:
$$
(0,0,0),\quad (2,2,2),\quad (1,2,3),\quad (0,0,1),\quad(1,2,2).
$$
Moreover, denote by $E\subseteq F\subseteq\p7$ the zero--locus of a general section of such an $\cE$. Then:
  \begin{enumerate}
  \item if $\alpha_1=\alpha_2=\alpha_3=0$, then $E$ is a line and each such a curve on $F$ can be obtained in this way;
  \item if $\alpha_1=\alpha_2=\alpha_3=2$, then $E$ is a smooth elliptic normal curve of degree $8$ and each such a curve on $F$ can be obtained in this way;
  \item if $\alpha_i=i$, then $E$ is a rational normal curve of degree $7$ and each such a curve on $F$ can be obtained in this way;
 \item if $\alpha_1=\alpha_2=0$, $\alpha_3=1$, then $E$ is a line and each such a curve on $F$ can be obtained in this way;
 \item if $\alpha_1=1$, $\alpha_2=\alpha_3=2$, then $E$ is a, possibly reducible, quintic with arithmetic genus $0$.
  \end{enumerate}}
\medbreak

In particular, for each
line $E\subseteq F$  we show the existence of exactly four
indecomposable non--isomorphic aCM bundles of rank $2$ whose general section vanishes exactly
along $E$. 

The study of (semi)stability of aCM bundles of rank $2$ on $\p1\times\p1\times\p1$ and of the moduli spaces of such (semi)stable bundles are the object of the paper \cite{C--F--M1}.

The case of del Pezzo threefolds with Picard number $2$ will be the object of forthcoming papers.

Throughout the whole paper we refer to \cite{I--P} and \cite{Ha2} for all the unmentioned definitions, notations and results.

We conclude this introduction by expressing our thanks to the referee and to M. Filip for their comments that allowed us to considerably improve the exposition of the paper.

\section{Some facts on aCM locally free sheaves}
\label{saCM}
Throughout the whole paper $k$ will denote an algebraically closed field of characteristic $0$. Let $X\subseteq \p N$ be a subvariety, i.e. an integral closed subscheme defined over $k$. We set $\cO_X(h):=\cO_{\p N}(1)\otimes\cO_X$. We start this section by recalling two important definitions. 

The variety $X\subseteq \p N$ is called arithmetically Cohen--Macaulay (aCM for short) if and only if the natural restriction maps $H^0\big(\p N,\cO_{\p N}(t)\big)\to H^0\big(X,\cO_{X}(th)\big)$ are surjective and $h^i\big(X,\cO_{X}(th)\big)=0$, $1\le i\le \dim(X)-1$. 

The variety $X\subseteq \p N$ is called arithmetically Gorenstein (aG for short) if and only if it is aCM and $\alpha$--subcanonical, i.e. its dualizing sheaf satisfies $\omega_X\cong\cO_X(\alpha h)$ for some $\alpha\in \bZ$.

In what follows  $F$ will denote an aCM, integral and smooth subvariety of $\p N$ of positive dimension $n$. 

\begin{definition}
  Let $\cE$ be a vector bundle on $F$.
 We say that $\cE$ is {\sl arithmetically Cohen--Macaulay}
  (aCM for short) if  $  H^i_*\big(F,\cE\big)=0$ for each $i=1,\dots,n-1$.
\end{definition}

If $\cE$ is an aCM bundle, then the minimal number of
generators $m(\cE)$ of $H^0_*\big(F,\cE\big)$ as a module over the
graded coordinate ring of $F$ is $\rk(\cE)\deg(F)$ at
most (e.g. see \cite{C--H1}). The aCM bundles for which the maximum is
attained are worth of particular interest for several reasons: e.g. they are semistable and they form an open subset in their moduli space. 

Such bundles are called Ulrich
bundles in the sequel.
We recall the following definition (see \cite{C--H2}, Definition 2.1 and Lemma 2.2: see also \cite{C--H1}, Definition 3.4
and which is slightly different).

\begin{definition}
  Let $\cE$ be a vector bundle on $F$.
  We say that $\cE$ is {\sl initialized} if 
  $$
  \min\{\ t\in\bZ\ \vert\ h^0\big(F,\cE(t h)\big)\ne0\ \}=0.
  $$
  We say that $\cE$ is {\sl Ulrich} if it is initialized, aCM and
  $h^0\big(F,\cE\big)=\rk(\cE)\deg(F)$. 
\end{definition}

Let $\cE$ be an Ulrich bundle. On one hand we know that
$m(\cE)=\rk(\cE)\deg(F)$. On the other hand the generators of
$H^0\big(F,\cE\big)$ are minimal generators of $H^0_*\big(F,\cE\big)$
due to the vanishing of $H^0\big(F,\cE(-h)\big)$. We conclude that
$\cE$ is necessarily globally generated. Several other results are
known for Ulrich bundles (e.g. see \cite{E--S}, \cite{C--H1}, \cite{C--H2},
\cite{C--K--M}). 

Assume now $n=3$ and let $\cE$ be a bundle on $F$. We denote by
$\omega_i$ the Chern classes of the sheaf $\Omega_{F\vert k}^1$. In
this case Riemann--Roch theorem is 
\begin{equation}
  \label{RRgeneral}
  \begin{aligned}
    \chi(\cE)=-{\rk(\cE)\over24}\omega_1\omega_2&+{1\over6}(c_1(\cE)^3-3c_1(\cE)c_2(\cE)+3c_3(\cE))-\\
    &-{1\over4}(\omega_1c_1(\cE)^2-2\omega_1c_2(\cE))+{1\over{12}}(\omega_1^2c_1(\cE)+\omega_2c_1(\cE)).
  \end{aligned}
\end{equation}

Assume $\rk(\cE)=2$ and let $s$ be a global section of  $\cE$. In general its zero--locus
$(s)_0\subseteq F$ is either empty or its codimension is at most
$2$. Thus, in this second case, we can always write $(s)_0=E\cup D$
where $E$ has codimension $2$ (or it is empty) and $D$ has pure codimension
$1$ (or it is empty). In particular $\cE(-D)$ has a section vanishing
on $E$, thus we can consider its Koszul complex 
\begin{equation}
  \label{seqIdeal}
  0\longrightarrow \cO_F(D)\longrightarrow \cE\longrightarrow \cI_{E\vert F}(c_1-D)\longrightarrow 0.
\end{equation}
If $D=0$, then $E$ is locally complete intersection, being $\rk(\cE)=2$, hence it has no embedded components.

Moreover we also have the following exact sequence
\begin{equation}
  \label{seqStandard}
  0\longrightarrow \cI_{E\vert F}\longrightarrow \cO_F\longrightarrow \cO_E\longrightarrow 0.
\end{equation}

The above construction can be reversed. Indeed on a smooth threefold the following particular case of the more general Hartshorne--Serre correspondence holds (for further details about the statement in the general case see \cite{Vo}, \cite{Ha1}, \cite{Ar}).

\begin{theorem}
  \label{tSerre}
  Let $E\subseteq F$ be a local complete intersection subscheme of codimension $2$ and $\mathcal L$ an invertible sheaf on $F$ such that $H^2\big(F,{\mathcal L}^\vee\big)=0$. If $\det(\cN_{E\vert F})\cong\mathcal L\otimes\cO_E$, then there exists a vector bundle $\cE$ of rank $2$ on $F$ such that:
  \begin{enumerate}
  \item $\det(\cE)\cong\mathcal L$;
  \item $\cE$ has a section $s$ such that $E$ coincides with the zero locus $(s)_0$ of $s$.
  \end{enumerate}
  Moreover, if $H^1\big(F,{\mathcal L}^\vee\big)= 0$, the above two conditions  determine $\cE$ up to isomorphism.
\end{theorem}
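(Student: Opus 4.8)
The plan is to obtain $\cE$ as an extension of $\cI_{E\vert F}\otimes\mathcal L$ by $\cO_F$ and to read all the stated properties off a suitably chosen extension class; this is the usual proof of the Hartshorne--Serre correspondence, and I would follow \cite{Ar}, \cite{Ha1}, \cite{Vo}. Isomorphism classes of extensions
$$
0\lra\cO_F\lra\cE\lra\cI_{E\vert F}\otimes\mathcal L\lra0
$$
are classified by $\Ext^1_F\big(\cI_{E\vert F}\otimes\mathcal L,\cO_F\big)$, which I would study through the local--to--global spectral sequence $E_2^{p,q}=H^p\big(F,\mathcal{E}xt^q(\cI_{E\vert F}\otimes\mathcal L,\cO_F)\big)\Rightarrow\Ext^{p+q}_F\big(\cI_{E\vert F}\otimes\mathcal L,\cO_F\big)$.

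The first real step is the computation of the $\mathcal{E}xt$ sheaves. Applying $\mathcal{H}om(-,\cO_F)$ to $0\to\cI_{E\vert F}\otimes\mathcal L\to\mathcal L\to\cO_E\otimes\mathcal L\to0$ and using that $\mathcal L$ is locally free while $\codim_F E=2$, one gets $\mathcal{H}om\big(\cI_{E\vert F}\otimes\mathcal L,\cO_F\big)\cong\mathcal L^\vee$, $\mathcal{E}xt^1\big(\cI_{E\vert F}\otimes\mathcal L,\cO_F\big)\cong\mathcal{E}xt^2\big(\cO_E\otimes\mathcal L,\cO_F\big)$, and $\mathcal{E}xt^q\big(\cI_{E\vert F}\otimes\mathcal L,\cO_F\big)=0$ for $q\ge2$. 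Since $E$ is a local complete intersection of codimension $2$, the Koszul complex gives $\mathcal{E}xt^2\big(\cO_E,\cO_F\big)\cong\det(\cN_{E\vert F})$, hence $\mathcal{E}xt^2\big(\cO_E\otimes\mathcal L,\cO_F\big)\cong\det(\cN_{E\vert F})\otimes\mathcal L^\vee$, which is $\cO_E$ by the hypothesis $\det(\cN_{E\vert F})\cong\mathcal L\otimes\cO_E$. The five--term exact sequence of the spectral sequence then reads
$$
0\lra H^1\big(F,\mathcal L^\vee\big)\lra\Ext^1_F\big(\cI_{E\vert F}\otimes\mathcal L,\cO_F\big)\lra H^0\big(E,\cO_E\big)\lra H^2\big(F,\mathcal L^\vee\big),
$$
so the vanishing $H^2\big(F,\mathcal L^\vee\big)=0$ produces a surjection $\Ext^1_F\big(\cI_{E\vert F}\otimes\mathcal L,\cO_F\big)\twoheadrightarrow H^0\big(E,\cO_E\big)$, which becomes an isomorphism when moreover $H^1\big(F,\mathcal L^\vee\big)=0$.

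Next I would choose an extension class $\xi$ mapping to the unit section of $H^0\big(E,\cO_E\big)$ and let $\cE$ be the corresponding middle term. To see that $\cE$ is locally free of rank $2$ one invokes the Serre (Cayley--Bacharach type) condition: at a point of $E$ the ideal $\cI_{E\vert F}$ is generated by a regular sequence of length $2$, and $\xi$ restricts there to a generator of the free rank one $\cO_E$--module $\mathcal{E}xt^1\big(\cI_{E\vert F}\otimes\mathcal L,\cO_F\big)$, so the extension is locally the Koszul extension $\cO_F^{\oplus2}$; hence $\cE$ is a rank $2$ bundle. For (1), away from the codimension $2$ locus $E$ the sequence specialises to $0\to\cO_F\to\cE\to\mathcal L\to0$, so $\det\cE$ and $\mathcal L$ agree on $F\setminus E$ and therefore on all of $F$, $F$ being smooth, hence normal. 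For (2), the tautological map $s\colon\cO_F\to\cE$ is injective with locally free cokernel, hence nowhere vanishing, on $F\setminus E$, while near a point of $E$ it is the Koszul syzygy $(g,-f)$ whose zero scheme is $V(f,g)=E$; thus $(s)_0=E$ scheme--theoretically. Finally, when $H^1\big(F,\mathcal L^\vee\big)=0$ the above map is an isomorphism, so two bundles as in the statement arise from classes both mapping to nowhere vanishing sections of $\cO_E$; rescaling one of them we may assume the classes coincide, whence the two extensions, and in particular the two bundles, are isomorphic.

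The point needing most care is the local freeness criterion of the third step: pinning down which classes in $\Ext^1$ yield a locally free middle term and matching the chosen global extension with the local Koszul complexes, together with, for the uniqueness part, the bookkeeping when $E$ is disconnected so that $H^0\big(E,\cO_E\big)$ is larger than $k$. The remaining ingredients are the standard Hartshorne--Serre arguments of \cite{Ar}, \cite{Ha1}, \cite{Vo}.
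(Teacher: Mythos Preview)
Your proposal is correct and is precisely the standard Hartshorne--Serre argument. Note, however, that the paper does not give its own proof of this theorem: it is stated as a known result with references to \cite{Vo}, \cite{Ha1}, \cite{Ar}, and your outline follows exactly the approach of those references (in particular Arrondo's local--to--global spectral sequence presentation), so there is nothing to compare.
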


From now on, $F$ will be $\p1\times\p1\times\p1$. 
We recall that the canonical sheaf satisfies $\omega
_F\cong\cO_F(-2h)$ where $h$ is the hyperplane class on $F$ given by
the natural embedding
$F\subseteq\p{7}$. Since $F$ is also aCM, it follows that it is also aG, thus a vector bundle $\cE$ on $F$ is aCM if and only if the same holds for ${\cE}^\vee$.

As pointed out in the introduction $A(F)\cong \bZ[h_1,h_2,h_3]/(h_1^2,h_2^2,h_3^2]$: thus $\Pic(F)$ is freely generated by $h_1$, $h_2$, $h_3$.  We have  $h=h_1+h_2+h_3$, $\deg(F)=h^3=6$,
$\omega_1=\omega_F=-2h$, $\omega_1\omega_2=-24$ in Formula
\eqref{RRgeneral}. More generally, for each $D\in \Pic(F)$, then there are $\delta_1,\delta_2,\delta_3\in\bZ$ such that $\cO_F(D)\cong\cO_F(\delta_1h_1+\delta_2h_2+\delta_3h_3)$. The following K\"unneth's formulas will be
repeatedly used without explicit mention throughout the paper 
\begin{equation*}
  \label{Kunneth}
    h^i\big(F,\cO_F(D)\big)=\sum_{(i_1,i_2,i_3)\in\bN^3,\atop{i_1+i_2+i_3=i}}h^{i_1}\big(F,\cO_{\p1}(\delta_1)\big)h^{i_2}\big(F,\cO_{\p1}(\delta_2)\big)h^{i_3}\big(F,\cO_{\p1}(\delta_3)\big).
\end{equation*}
We will write $D\ge0$ to denote that $D$ has sections, i.e. $\delta_j\ge0$, $j=1,2,3$.

In particular we immediately have the following result.

\begin{lemma}
  \label{lInvaCM}
  The initialized aCM bundles of rank $1$ on $F$ are:
  \begin{gather*}
    \cO_F,\quad \cO_F(h_1),\quad\cO_F(h_2),\quad\cO_F(h_3),\\
    \cO_F(h_1+h_2),\quad\cO_F(h_1+h_3),\quad\cO_F(h_1+h_2),\\
    \cO_F(2h_1+h_2),\quad\cO_F(2h_1+h_3),\quad\cO_F(h_1+2h_2),\\
    \cO_F(2h_2+h_3),\quad\cO_F(h_1+2h_3),\quad\cO_F(h_2+2h_3).
  \end{gather*}
\end{lemma}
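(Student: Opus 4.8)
The plan is to reduce everything to one–dimensional computations via the K\"unneth formula and to exploit the symmetric group action on the three factors. I write an arbitrary rank $1$ bundle as $\mathcal L=\cO_F(\delta_1h_1+\delta_2h_2+\delta_3h_3)$ and first dispose of the \emph{initialized} condition. By K\"unneth, $h^0\big(F,\mathcal L(th)\big)=\prod_{j=1}^{3}h^0\big(\p1,\cO_{\p1}(\delta_j+t)\big)$, which is nonzero precisely when $\delta_j+t\ge0$ for every $j$, i.e. when $t\ge-\min_j\delta_j$. Hence $\mathcal L$ is initialized if and only if $\min_j\delta_j=0$; in particular $\delta_j\ge0$ for all $j$ and at least one $\delta_j$ vanishes. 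Since the symmetric group permutes the $h_j$ through automorphisms of $F$ that fix the polarization $h=h_1+h_2+h_3$, it preserves both the aCM and the initialized properties, so I may assume $\delta_1\le\delta_2\le\delta_3$, whence $\delta_1=0$.

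Next I would characterize the aCM property numerically, keeping in mind that the vanishing must hold for \emph{every} twist $t\in\bZ$. Recalling that $h^1\big(\p1,\cO_{\p1}(d)\big)\ne0$ exactly when $d\le-2$, the group $h^1\big(F,\mathcal L(th)\big)$ is the sum of the three K\"unneth products in which a single factor is an $h^1$; such a product is nonzero iff one coordinate satisfies $\delta_j+t\le-2$ while the remaining two satisfy $\delta_k+t\ge0$. A twist realizing this exists if and only if some index $j$ has $\delta_k\ge\delta_j+2$ for both $k\ne j$, which for the ordered triple amounts to $\delta_2-\delta_1\ge2$. Similarly $h^2\big(F,\mathcal L(th)\big)$ is the sum of the products with two $h^1$ factors, and a twist making it nonzero exists iff some $\delta_j\ge\delta_k+2$ for both $k\ne j$, i.e. $\delta_3-\delta_2\ge2$; alternatively this second statement follows from the first by the aG duality (a bundle $\cE$ is aCM if and only if $\cE^\vee$ is aCM) together with Serre duality, applied to $\mathcal L^\vee=\cO_F(-\delta_1h_1-\delta_2h_2-\delta_3h_3)$. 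Therefore $\mathcal L$ is aCM if and only if $\delta_2-\delta_1\le1$ and $\delta_3-\delta_2\le1$.

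Finally I would combine the two conditions. Imposing $\delta_1=0$ together with $\delta_2\in\{0,1\}$ and $\delta_3\in\{\delta_2,\delta_2+1\}$ leaves exactly the ordered triples $(0,0,0)$, $(0,0,1)$, $(0,1,1)$ and $(0,1,2)$. Undoing the symmetric group reduction, these orbits contain respectively $1$, $3$, $3$ and $6$ line bundles, for a total of $13$, which are precisely those displayed in the statement.

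The computations are entirely routine; the only point requiring genuine care is that the aCM hypothesis demands the vanishing of $h^1$ and $h^2$ after \emph{all} twists, not merely for $\mathcal L$ itself. Accordingly, the heart of the argument is the quantified ``there exists a twist'' analysis of the K\"unneth terms, which is what converts the cohomological condition into the clean inequalities $\delta_2-\delta_1\le1$ and $\delta_3-\delta_2\le1$. Getting those existence–of–twist bounds exactly right, and verifying the duality shortcut for the $h^2$ part, is the main thing to be careful about.
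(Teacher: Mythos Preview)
Your argument is correct and is precisely the routine K\"unneth computation the paper has in mind; the paper's own proof reads simply ``Straightforward.'' You have spelled out exactly what that word hides: the initialized condition forces $\min_j\delta_j=0$, and the aCM condition translates via K\"unneth into the gap bounds $\delta_2-\delta_1\le1$ and $\delta_3-\delta_2\le1$, yielding the four ordered triples and hence the thirteen listed line bundles.
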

\begin{proof}
  Straightforward.
\end{proof}

We now turn our attention on rank $2$ vector bundles $\cE$ on $F$. 
In what follows, we will denote its  Chern classes by
\begin{gather*}
  c_1:=c_1(\cE)=\alpha_1h_1+\alpha_2h_2+\alpha_3h_3,\\ 
  c_2:=c_2(\cE)=\beta_1h_2h_3+\beta_2h_1h_3+\beta_3h_1h_2.
\end{gather*}

\section{A lower bound on the first Chern class}\label{sLowerChern6-1}
In this section we will find a bound from below for the first Chern
class $c_1$ of an indecomposable initialized aCM bundle $\cE$ of rank
$2$ on $F$. 

The following lemma will be useful.
\begin{lemma}
  \label{lGG6-1}
  Let $\cE$ be an initialized aCM bundle of rank $2$ on $F$. Then $\cE^\vee(2h)$ is globally generated.
\end{lemma}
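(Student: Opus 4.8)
The plan is to show that $\cE^\vee(2h)$ is $0$-regular in the sense of Castelnuovo--Mumford with respect to the very ample class $h$ of the embedding $F\subseteq\p7$. Since a $0$-regular coherent sheaf is globally generated, and since the cohomology of a coherent sheaf on $F$ coincides with that of its push--forward to $\p7$, this immediately yields the assertion. Thus the whole proof reduces to verifying the three vanishings
$$
h^1\big(F,\cE^\vee(h)\big)=0,\qquad h^2\big(F,\cE^\vee\big)=0,\qquad h^3\big(F,\cE^\vee(-h)\big)=0,
$$
which are exactly the conditions $h^i\big(F,\cE^\vee(2h-ih)\big)=0$ for $i=1,2,3$.

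First I would dispose of the cases $i=1$ and $i=2$. As recalled just before Lemma \ref{lInvaCM}, $F$ is aG, hence $\cE^\vee$ is aCM because $\cE$ is; therefore $H^1_*\big(F,\cE^\vee\big)=H^2_*\big(F,\cE^\vee\big)=0$. In particular $h^1\big(F,\cE^\vee(h)\big)=0$ and $h^2\big(F,\cE^\vee\big)=0$, settling the first two required vanishings.

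The remaining vanishing $h^3\big(F,\cE^\vee(-h)\big)=0$ is where the hypothesis that $\cE$ is initialized enters. By Serre duality on the threefold $F$, together with $\omega_F\cong\cO_F(-2h)$, one has
$$
h^3\big(F,\cE^\vee(-h)\big)=h^0\big(F,\cE(h)\otimes\omega_F\big)=h^0\big(F,\cE(-h)\big),
$$
and the right--hand side vanishes precisely because $\cE$ is initialized. This completes the check of $0$-regularity, hence $\cE^\vee(2h)$ is globally generated.

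I do not expect a genuine obstacle here: the argument is short and uses only Serre duality, the aCM property of $\cE^\vee$, and the initialization hypothesis. The only point requiring a little care is the legitimate use of Castelnuovo--Mumford regularity with respect to $\cO_F(h)=\cO_{\p7}(1)\otimes\cO_F$, which is justified because $h$ is the hyperplane class of the embedding into $\p7$. Alternatively one could avoid regularity altogether and argue directly, e.g. via a Beilinson/Koszul type resolution of $\cO_F$ on $F$, but the regularity route is by far the most economical.
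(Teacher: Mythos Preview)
Your proof is correct and follows essentially the same route as the paper: verify that $\cE^\vee$ is $2$-regular (equivalently $\cE^\vee(2h)$ is $0$-regular) in the sense of Castelnuovo--Mumford and conclude global generation. The only cosmetic difference is that the paper applies Serre duality uniformly to all three vanishings, writing $h^i\big(F,\cE^\vee((2-i)h)\big)=h^{3-i}\big(F,\cE((i-4)h)\big)$ for $i=1,2,3$ and then invoking the aCM and initialized hypotheses on $\cE$, whereas you cite the aCM property of $\cE^\vee$ directly for $i=1,2$; since that property is itself obtained via Serre duality, the two arguments are the same.
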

\begin{proof}
  We have $h^i\big(F,\cE^\vee((2-i)h)\big)=h^{3-i}\big(F,\cE((i-4)h)\big)=0$, $i=1,2,3$. It follows that $\cE$ is $2$--regular in the sense of Castelnuovo--Mumford (see \cite{Mu}), hence it follows the first assertion. 
\end{proof}

\begin{remark}
\label{rBoundAlpha}
An immediate consequence of Lemma \ref{lGG6-1} is that $4h-c_1=c_1(\cE^\vee(2h))$ is effective, hence $\alpha_i\le4$. 

If $\alpha_i\ge3$, $i=1,2,3$, then there would be an injective morphism $\cO_F(2h-c_1)\to\cO_F(-h)$, which would induce an injective morphism $H^0\big(F,\cE(2h-c_1)\big)\to H^0\big(F,\cE(-h)\big)$. On the one hand we know that the target space is zero. On the other hand $H^0\big(F,\cE(2h-c_1)\big)\ne0$ since $\cE(2h-c_1)\cong{\cE}^\vee(2h)$ is globally generated, a contradiction. It follows that at least one of the $\alpha_i$ is at most $2$. 
\end{remark}

We now concentrate our attention in the proof that each non--zero section of an indecomposable initialized aCM bundle of rank $2$ vanishes on $F$ exactly along a curve. We first check that its zero--locus is non--empty.

\begin{lemma}
  \label{lNonEmpty6-1}
  Let $\cE$ be an indecomposable initialized aCM bundle of rank $2$ on $F$. Then the zero locus $(s)_0\subseteq F$ of a section of $\cE$ is non--empty. 
\end{lemma}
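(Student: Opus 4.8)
The plan is to argue by contradiction: suppose $\cE$ is an indecomposable initialized aCM bundle of rank $2$ whose general (equivalently, by genericity and the structure of the vanishing locus, whose every) section has empty zero locus. If a section $s\in H^0(F,\cE)$ has $(s)_0=\emptyset$, then $s$ is nowhere vanishing and thus gives an exact sequence
\begin{equation*}
  0\longrightarrow \cO_F\longrightarrow \cE\longrightarrow \cO_F(c_1)\longrightarrow 0,
\end{equation*}
exhibiting $\cE$ as an extension of $\cO_F(c_1)$ by $\cO_F$, with the extension class living in $\Ext^1_F(\cO_F(c_1),\cO_F)\cong H^1(F,\cO_F(-c_1))$. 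So the first step is to show this extension group vanishes, which would force $\cE\cong\cO_F\oplus\cO_F(c_1)$, contradicting indecomposability.

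The key input is the numerical control on $c_1$ already available: by Remark \ref{rBoundAlpha} we have $\alpha_i\le 4$ for all $i$ and at least one $\alpha_i\le 2$, and since $\cE$ is initialized with $h^0(F,\cE)\ne 0$ while $h^0(F,\cE(-h))=0$, the sub-line-bundle $\cO_F\hookrightarrow\cE$ cannot be destabilized too much; in particular $c_1=\alpha_1h_1+\alpha_2h_2+\alpha_3h_3$ cannot have all $\alpha_i\le 0$ unless $c_1=0$ (if $c_1=0$ the sequence splits since $\cE$ aCM of rank $2$ with trivial determinant and a nowhere-zero section is $\cO_F\oplus\cO_F$, again contradicting indecomposability; more simply $h^1(F,\cO_F)=0$). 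The heart of the matter is therefore: for the relevant range of $c_1$, show $h^1(F,\cO_F(-c_1))=0$. By the Künneth formula recalled in the text,
\begin{equation*}
  h^1\big(F,\cO_F(-c_1)\big)=\sum_{i_1+i_2+i_3=1} h^{i_1}\big(\p1,\cO_{\p1}(-\alpha_1)\big)\,h^{i_2}\big(\p1,\cO_{\p1}(-\alpha_2)\big)\,h^{i_3}\big(\p1,\cO_{\p1}(-\alpha_3)\big),
\end{equation*}
so this cohomology is nonzero exactly when some $-\alpha_j\le -2$, i.e. some $\alpha_j\ge 2$, while the other two $-\alpha_k\ge 0$, i.e. the other two $\alpha_k\le 0$. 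Combined with $h^0(F,\cE)\ne 0$ forcing (via the section) some positivity and with the initialization hypothesis, I would rule out every such pattern: if two of the $\alpha_i$ are $\le 0$ and one is $\ge 2$, I can twist down by an appropriate $h_j$ using the sequence $0\to\cO_F(-h_j)\to\cE(-h_j)\to\cO_F(c_1-h_j)\to 0$ to contradict $h^0(F,\cE(-h))=0$, since $\cO_F(c_1-h)$ or a further twist would still have sections; here one exploits that $h^0(F,\cO_F(-h_j))=0$ so the sections of $\cE(-h_j)$ inject into those of $\cO_F(c_1-h_j)$.

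The main obstacle I expect is the bookkeeping in this last step: there are several sign patterns for $(\alpha_1,\alpha_2,\alpha_3)$ compatible with $h^1(F,\cO_F(-c_1))\ne 0$, and for each I must produce the right twist $h_j$ and track both $h^0(F,\cE(-h_j))$ and the initialization constraint to derive the contradiction; the symmetric group action lets me assume an ordering, which cuts the casework down but does not eliminate it. The alternative, cleaner route—which I would try first—is to observe that if $(s)_0=\emptyset$ for the general section then in fact every nonzero section is nowhere-vanishing, so $\cE$ is globally generated by a sub-pencil and one gets $\cE\cong\cO_F\oplus\cO_F(c_1)$ directly from the splitting of the extension whenever $h^1(F,\cO_F(-c_1))=0$; then the only surviving danger is $h^1(F,\cO_F(-c_1))\ne 0$, handled as above. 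Either way the conclusion is that $\cE$ decomposes, contradicting the hypothesis, so $(s)_0\ne\emptyset$.
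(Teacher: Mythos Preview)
Your outline has a genuine gap: the initialization condition $h^0\big(F,\cE(-h)\big)=0$ alone does not rule out the sign patterns for which $h^1\big(F,\cO_F(-c_1)\big)\ne0$. Concretely, assume $\alpha_1\le\alpha_2\le0$ and $\alpha_3\ge2$. Twisting the sequence $0\to\cO_F\to\cE\to\cO_F(c_1)\to0$ by $\cO_F(-h)$ and using $h^0\big(F,\cO_F(-h)\big)=h^1\big(F,\cO_F(-h)\big)=0$ gives $h^0\big(F,\cE(-h)\big)=h^0\big(F,\cO_F(c_1-h)\big)$, and the right-hand side is already zero because $\alpha_1-1<0$. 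So initialization is automatically satisfied and yields no contradiction; twisting by a single $h_j$ instead of $h$ does not help, since initialization says nothing about $h^0\big(F,\cE(-h_j)\big)$.

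What actually does the work is the aCM vanishing for $h^2$, which you never invoke. Twisting by $\cO_F(-2h)$ and using $h^2\big(F,\cO_F(-2h)\big)=0$, $h^3\big(F,\cO_F(-2h)\big)=1$, one finds $h^2\big(F,\cE(-2h)\big)\ge h^2\big(F,\cO_F(c_1-2h)\big)-1$, and K\"unneth shows this is positive whenever $\alpha_1\le-1$ or $\alpha_3\ge3$. This still leaves the single case $c_1=2h_3$, where $h^1\big(F,\cO_F(-2h_3)\big)=1$ and a nontrivial extension genuinely exists: one must then recognize it as the pullback via $\pi_3$ of the Euler sequence on $\p1$, forcing $\cE\cong\cO_F(h_3)^{\oplus2}$, which is decomposable. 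Your proposal neither isolates this case nor supplies the Euler-sequence identification.
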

\begin{proof}
  Assume that $\alpha_1\le\alpha_2\le\alpha_3$. If $(s)_0=\emptyset$, then sequence \eqref{seqIdeal} becomes
  $$
  0\longrightarrow \cO_F\longrightarrow \cE\longrightarrow \cO_F(c_1)\longrightarrow 0.
  $$
  Such a sequence corresponds to an element of $\Ext^1\big(\cO_F(c_1),\cO_F\big)=H^1\big(F,\cO_F(-c_1)\big)$. Since $\cE$ is indecomposable, it follows that the last space must be non--zero. Thus $\alpha_3\ge2$ and $\alpha_1\le\alpha_2\le0$.

  If either $\alpha_1\le-1$ or $\alpha_3\ge3$, then the cohomology of the above sequence twisted by $-2h$ gives
  \begin{align*}
    0\longrightarrow H^2\big(F,\cE(-2h)\big)\longrightarrow H^2\big(F,\cO_F(c_1-2h)\big)\longrightarrow H^3\big(F,\cO_F(-2h)\big)\cong k.
  \end{align*}
  Due to the hypothesis on $c_1$ we have $1\le h^2\big(F,\cE(-2h)
  \big)$, contradicting that $\cE$ is aCM. We conclude that $\alpha_1=\alpha_2=0$, i.e. $c_1=2h_3$. 

  Since $h^1\big(F,\cO_F(-2h_3)\big)=1$ there exists exactly one non--trivial exact sequence of the form
  $$
  0\longrightarrow \cO_F(-2h_3)\longrightarrow \cE(-2h_3)\longrightarrow \cO_F\longrightarrow 0.
  $$
  Notice that we always have on $F$ the pull--back via $\pi_3^*$ of the Euler exact sequence, i.e. 
  $$
  0\longrightarrow \cO_F(-2h_3)\longrightarrow \cO_F(-h_3)\oplus \cO_F(-h_3)\longrightarrow \cO_F\longrightarrow 0.
  $$
  We conclude that the two sequences above are isomorphic, hence
  $\cE\cong\cO_F(h_3)\oplus \cO_F(h_3)$. It follows that there are no
  indecomposable initialized aCM vector bundles of rank $2$ on $F$
  with $c_1=2h_3$.
 \end{proof}

Lemma \ref{lNonEmpty6-1} implies that for each $s\in H^0\big(F,\cE\big)$ we have $(s)_0=E\cup D$ where $E$ has codimension $2$ (or it is empty) and $D\in\vert\delta_1h_1+\delta_2h_2+\delta_3h_3\vert$ has codimension $1$ (or it is empty). 

On the one hand, let us take the cohomology of Sequence \eqref{seqIdeal} twisted by $\cO_F(-h)$. Then the vanishing of $h^0\big(F,\cE(-h)\big)$
implies $h^0\big(F,\cO_F(D-h)\big)=0$. In particular we know that at
least one of the $\delta_i$ is zero.

On the other hand, twisting the same sequence by $\cO_F(2h-c_1)$, using the isomorphism $\cE(-c_1)\cong{\cE}^\vee$ and taking into account that ${\cE}^\vee(2h)$ is globally generated (see Lemma \ref{lGG6-1}), we obtain that $\cI_{E\vert F}(2h-D)$ is globally generated too. Thus
$$
0\ne H^0\big(F, \cI_{E\vert F}(2h-D)\big)\subseteq H^0\big(F, \cO_F(2h-D)\big),
$$
hence $\delta_i\le2$, $i=1,2,3$. Thus, up to permutations, 
\begin{equation}
\label{delta}
(\delta_1,\delta_2,\delta_3)\in\left\{\ (0,0,0), (0,0,1), (0,1,1), (0,0,2), (0,1,2), (0,2,2)\ \right\}.
\end{equation}

We now prove that $c_1-D$ is effective: notice that this implies the effectiveness of $c_1$ too. Assume $c_1-D$ is non--effective on $F$: thanks to Sequence \eqref{seqStandard} we have
\begin{equation}
\label{L0}
h^0\big(F,\cI_{E\vert F}(c_1-D)\big)\le h^0\big(F,\cO_{F}(c_1-D)\big)=0.
\end{equation}
  Taking into account that $\cE$ is aCM and $\delta_i\ge0$, the cohomology of Sequence \eqref{seqIdeal} yields also the vanishings
\begin{equation}
\label{L1}
  h^1\big(F,\cI_{E\vert F}(c_1-D)\big)=h^2\big(F,\cI_{E\vert F}(c_1-D)\big)=0.
\end{equation}
  Twisting Sequence \eqref{seqIdeal} by $\cO_F(-h)$, the same argument also gives
\begin{equation}
\label{L2}
  h^0\big(F,\cI_{E\vert F}(c_1-D-h)\big)=h^1\big(F,\cI_{E\vert F}(c_1-D-h)\big)=\\
h^2\big(F,\cI_{E\vert F}(c_1-D-h)\big)=0.
  \end{equation}

Finally, the cohomology of Sequence \eqref{seqIdeal} twisted by $\cO_F(-2h)$ gives
$$
0\longrightarrow H^1\big(F,\cI_{E\vert F}(c_1-D-2h)\big)\longrightarrow H^2\big(F,\cO_F(D-2h)\big)\longrightarrow 0
$$
because $\cE$ is aCM. Let
$$
\eta_1(D):=\left\lbrace\begin{array}{ll} 
0\quad&\text{if $D\not\in\left\vert2h_j\right\vert$,}\\
1\quad&\text{if $D\in\left\vert2h_j\right\vert$.}
\end{array}\right.
$$
Due to the restrictions on the $\delta_i$'s, we immediately have
\begin{equation}
\label{L3}
h^1\big(F,\cI_{E\vert F}(c_1-D-2h)\big)=h^2\big(F,\cO_F(D-2h)\big)=\eta_1(D).
\end{equation}
We also have the exact sequence
\begin{align*}
0\longrightarrow H^2\big(F,\cI_{E\vert F}(c_1-D-2h)\big)&\longrightarrow H^3\big(F,\cO_F(D-2h)\big)\longrightarrow \\
\longrightarrow H^3\big(F,\cE(-2h)\big)&\longrightarrow H^3\big(F,\cI_{E\vert F}(c_1-D-2h)\big)\longrightarrow 0.
\end{align*}
If $D\ne0$, then $h^3\big(F,\cO_F(D-2h)\big)=0$, thus $h^2\big(F,\cI_{E\vert F}(c_1-D-2h)\big)=0$ too. Assume $D=0$: then $E\ne\emptyset$ (see Lemma \ref{lNonEmpty6-1}), so that $h^0\big(F,\cI_{E\vert F}\big)=0$, hence
\begin{align*}
h^3\big(F,\cE(-2h)\big)&=h^0\big(F,\cE(-c_1)\big)=h^0\big(F,\cO_F(-c_1)\big)=\\
&=h^3\big(F,\cO_F(c_1-2h)\big)=h^3\big(F,\cO_F(c_1-D-2h)\big).
\end{align*}
Sequence \eqref{seqStandard} twisted by $\cO_F(c_1-D-2h)$, also yields $h^3\big(F,\cI_{E\vert F}(c_1-D-2h)\big)=
h^3\big(F,\cO_F(c_1-D-2h)\big)$. Thus the last map in the above sequence is an isomorphism. We conclude that if we define
$$
\eta_2(D):=\left\lbrace\begin{array}{ll} 
0\quad&\text{if $D\ne0$,}\\
1\quad&\text{if $D=0$,}
\end{array}\right.
$$
then
\begin{equation}
\label{L4}
h^2\big(F,\cI_{E\vert F}(c_1-D-2h)\big)=h^3\big(F,\cO_F(D-2h)\big)=\eta_2(D).
\end{equation}

\begin{lemma}
  Let $\cE$ be an indecomposable initialized aCM bundle of rank $2$ on $F$. Assume that $s\in H^0\big(F,\cE\big)$ is such that $(s)_0=E\cup D$ where $E$ has codimension $2$ (or it is empty) and $D$ has codimension $1$. If $c_1-D$ is not effective, then $E=\emptyset$.
  \end{lemma}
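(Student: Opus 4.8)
The goal is to show that if $c_1-D$ is not effective, then the codimension-$2$ part $E$ of $(s)_0$ must be empty. The strategy is a proof by contradiction: assume $E\ne\emptyset$ and derive that $\cE$ splits, contradicting indecomposability. All the cohomological bookkeeping has already been carried out in the lead-up: under the non-effectiveness hypothesis we have \eqref{L0}, \eqref{L1}, \eqref{L2} for $\cI_{E\vert F}(c_1-D)$ and its twist by $-h$, and \eqref{L3}, \eqref{L4} for the twist by $-2h$, where the two correction terms $\eta_1(D)$ and $\eta_2(D)$ record the exceptional behaviour when $D\in\vert 2h_j\vert$ or $D=0$.

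First I would feed these vanishings into Sequence \eqref{seqIdeal}, which reads $0\to\cO_F(D)\to\cE\to\cI_{E\vert F}(c_1-D)\to0$, twisted successively by $\cO_F$, $\cO_F(-h)$ and $\cO_F(-2h)$, to compute the cohomology of $\cE$ itself in negative and low degrees. Since $\cE$ is assumed aCM, $H^1_*(F,\cE)=H^2_*(F,\cE)=0$ automatically, so the interesting output is on $H^0$ and $H^3$. Combining \eqref{L0} with $h^0(F,\cO_F(D))\ge1$ (as $D\ge0$) one gets that in fact $H^0(F,\cE)=H^0(F,\cO_F(D))$, i.e. every section of $\cE$ comes from $\cO_F(D)$; since $\cE$ is initialized and $h^0(F,\cE)\ne 0$ this already forces $D=0$ — otherwise $s$ itself would be divisible by the equation of $D$, so it would vanish on a divisor and $(s)_0$ would have a codimension-one component, which is fine, but initializedness combined with the forthcoming splitting argument will be what really pins things down. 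More precisely, once $D=0$ the correction terms become $\eta_1(0)=0$ (a point is not in $\vert 2h_j\vert$... here $D=0$, so $\eta_1(0)=0$) and $\eta_2(0)=1$, and Sequence \eqref{seqIdeal} degenerates to the Koszul sequence $0\to\cO_F\to\cE\to\cI_{E\vert F}(c_1)\to0$ of the (now purely codimension-$2$, locally complete intersection) subscheme $E$.

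The heart of the argument is then to show this Koszul extension splits. I would use \eqref{L4} with $D=0$: it gives $h^2(F,\cI_{E\vert F}(c_1-2h))=h^3(F,\cO_F(-2h))=1$, and more importantly, running the snake-lemma-style sequence displayed just before \eqref{L4}, one sees $H^3(F,\cE(-2h))\cong H^3(F,\cI_{E\vert F}(c_1-2h))$. Dualizing, $H^3(F,\cE(-2h))=H^0(F,\cE^\vee(-h))^\vee=H^0(F,\cE(-c_1-h))^\vee$; since $\cE$ is aCM iff $\cE^\vee$ is (by the aG property of $F$, noted in the text) and since $4h-c_1$ effective gives control on $c_1$, one argues this forces $c_1$ to be effective — which is exactly the assertion "$c_1-D=c_1$ not effective" being contradicted, unless $E=\emptyset$. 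Concretely: the class of the extension \eqref{seqIdeal} (with $D=0$) lives in $\Ext^1(\cI_{E\vert F}(c_1),\cO_F)$; I would compute this $\Ext$ via the local-to-global spectral sequence or via Sequence \eqref{seqStandard}, reducing it to $H^1(F,\cO_F(-c_1))$ together with $H^0$ of the normal-bundle determinant; the non-effectiveness of $c_1$ makes $H^1(F,\cO_F(-c_1))$ vanish (on $\p1\times\p1\times\p1$, if some $\alpha_j<0$ then a Künneth argument kills $H^1(F,\cO_F(-c_1))$ unless exactly the "$2h_j$-type" exception occurs, and that case is excluded because $D=0$ not that divisor), so the extension is trivial and $\cE\cong\cO_F\oplus\cO_F(c_1)$ — contradicting indecomposability. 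Hence $E=\emptyset$.

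The main obstacle I anticipate is the $\Ext^1$ computation and making airtight the claim that non-effectiveness of $c_1$ kills the relevant extension group: on $\p1\times\p1\times\p1$ there are genuinely non-effective classes $c_1$ with $h^1(F,\cO_F(-c_1))\ne0$ (e.g. $c_1=2h_3$, which is why Lemma \ref{lNonEmpty6-1} had to treat that case by hand), so one cannot simply say "non-effective $\Rightarrow$ $H^1$ vanishes." The careful point is that here $D=0$ and, crucially, we already know from the chain of vanishings \eqref{L1}--\eqref{L4} that $\cI_{E\vert F}(c_1-2h)$ and $\cO_F(-2h)$ have matching $H^2$ and $H^3$; I would exploit exactly this matching, rather than a naive Künneth estimate, to conclude that the extension class is forced to be zero, or alternatively to directly produce a splitting $\cE\to\cO_F$ of the inclusion $\cO_F\hookrightarrow\cE$ by showing $h^0(F,\cE^\vee)\ne0$, i.e. $h^3(F,\cE(c_1-2h))\ne 0$, which again follows from the computed cohomology once $D=0$. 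Either way, indecomposability is violated and the lemma follows.
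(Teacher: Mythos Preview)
There are two genuine gaps.

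First, the reduction to $D=0$ is not valid. From \eqref{L0} you correctly obtain $H^0(F,\cE)\cong H^0(F,\cO_F(D))$, but this only says every section of $\cE$ factors through the inclusion $\cO_F(D)\hookrightarrow\cE$, hence vanishes on $E$ together with some member of $|D|$; it does not force $|D|$ to be trivial. The lemma makes no such claim --- in fact the case $E=\emptyset$, $D\ne 0$ still has to be treated afterwards, in Proposition \ref{pLower6-1}. Everything you write after this point addresses only a special case.

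Second, and fatally, the splitting strategy cannot succeed even when $D=0$. The Koszul sequence $0\to\cO_F(D)\to\cE\to\cI_{E\vert F}(c_1-D)\to 0$ \emph{never} splits when $E\ne\emptyset$ and $\cE$ is locally free, irrespective of whether $c_1-D$ is effective. Indeed, the extension class maps under the edge morphism of the local-to-global spectral sequence to
\[
H^0\big(E,\mathcal{E}xt^1(\cI_{E\vert F},\cO_F)\otimes\cO_F(2D-c_1)\big)\cong H^0\big(E,\det(\cN_{E\vert F})\otimes\cO_E(2D-c_1)\big)\cong H^0(E,\cO_E),
\]
using $\det(\cN_{E\vert F})\cong\cO_E(c_1-2D)$; its image there is precisely the nowhere-vanishing section encoding the local freeness of $\cE$, so the class is non-zero. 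Your alternative route confirms this: in the $D=0$ case one finds $h^0(F,\cE^\vee)=h^3(F,\cI_{E\vert F}(c_1-2h))=h^3(F,\cO_F(c_1-2h))=h^0(F,\cO_F(-c_1))$, which vanishes unless $-c_1$ is effective --- a condition strictly stronger than ``$c_1$ not effective'' --- so no retraction is produced.

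The paper's proof is entirely different and computational. Assuming $E\ne\emptyset$, it cuts with a general hyperplane $H$, sets $S=F\cap H$, $Z=E\cap H$, and feeds the vanishings \eqref{L0}--\eqref{L4} into the restriction sequences \eqref{seqSectionIdeal} and \eqref{seqSectionScheme} to obtain $h^0(S,\cO_S(c_1-D))=\deg(E)$ together with $\big|h^0(S,\cO_S(c_1-D-h))-\deg(E)\big|\le 1$. These numbers are then computed directly from $0\to\cO_F(c_1-D-h)\to\cO_F(c_1-D)\to\cO_S(c_1-D)\to 0$ and K\"unneth on $F$, yielding explicit polynomial relations in the coordinates $\epsilon_i$ of $c_1-D$; a short case analysis shows these admit no integer solutions with $\epsilon_1\le -1$.
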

  \begin{proof}
 Assume $E\ne\emptyset$, hence $\deg(E)\ge1$. Let $H$ be a general hyperplane in $\p7$. Define $S:=F\cap H$ and $Z:=E\cap H$, so that $\dim(Z)=0$. We have the following exact sequence
  \begin{equation}
    \label{seqSectionIdeal}
    0\longrightarrow \cI_{E\vert F}(c_1-D-h)\longrightarrow \cI_{E\vert F}(c_1-D)\longrightarrow \cI_{Z\vert S}(c_1-D)\longrightarrow 0.
  \end{equation}
  The vanishing of the cohomologies of $\cI_{E\vert F}(c_1-D)$ and $\cI_{E\vert F}(c_1-D-h)$ (see Equalities \eqref{L0}, \eqref{L1}, \eqref{L2}) implies
  $$
  h^0\big(S,\cI_{Z\vert S}(c_1-D)\big)=h^1\big(S,\cI_{Z\vert S}(c_1-D)\big)=0.
  $$
  The above equalities and the cohomology of
  \begin{equation}
    \label{seqSectionScheme}
  0\longrightarrow \cI_{Z\vert S}(c_1-D)\longrightarrow \cO_S(c_1-D)\longrightarrow \cO_Z\longrightarrow 0,
 \end{equation}
 give $h^1\big(S,\cO_{S}(c_1-D)\big)=0$ and
  $$
  h^0\big(S,\cO_{S}(c_1-D)\big)=h^0\big(Z,\cO_Z\big)=\deg(E)\ge1.
  $$

  Twisting Sequence \eqref{seqSectionIdeal} by $\cO_F(-h)$, thanks to \eqref{L3} and \eqref{L4}, a similar argument also gives
  \begin{gather*}
    h^0\big(S,\cI_{Z\vert S}(c_1-D-h)\big)=h^1\big(F,\cI_{E\vert F}(c_1-D-2h)\big)=\eta_1(D),\\
    h^1\big(S,\cI_{Z\vert S}(c_1-D-h)\big)=h^2\big(F,\cI_{E\vert F}(c_1-D-2h)\big)=\eta_2(D).
  \end{gather*}
  The above equalities and the cohomology of Sequence \eqref{seqSectionScheme} twisted by $\cO_S(-h)$ yield
  $$
h^0\big(Z,\cO_Z\big)= h^0\big(S,\cO_{S}(c_1-D-h)\big)-  h^1\big(S,\cO_{S}(c_1-D-h)\big)-\eta_1(D) +\eta_2(D),
  $$
and $h^1\big(S,\cO_{S}(c_1-D-h)\big)\le \eta_2(D)$. It follows 
  $$
 h^0\big(S,\cO_{S}(c_1-D-h)\big)-1\le  h^0\big(Z,\cO_Z\big)\le h^0\big(S,\cO_{S}(c_1-D-h)\big)+1.
 $$

Set $c_1-D:=\epsilon_1h_1+\epsilon_2h_2+\epsilon_3h_3$ and assume $\epsilon_1\le\epsilon_2\le\epsilon_3$: $\epsilon_1\le -1$ because $c_1-D$ is assumed to be not effective. Thus the cohomology of 
 $$
 0\longrightarrow\cO_F(c_1-D-h)\longrightarrow\cO_F(c_1-D)\longrightarrow\cO_S(c_1-D)\longrightarrow0
 $$
and the vanishing $h^1\big(S,\cO_{S}(c_1-D)\big)=0$ proved above, yield
$$
h^0\big(S,\cO_{S}(c_1-D)\big)=h^1\big(F,\cO_{F}(c_1-D-h)\big)-h^1\big(F,\cO_{F}(c_1-D)\big).
$$
Since $h^0\big(S,\cO_{S}(c_1-D)\big)\ge1$, it follows that $h^1\big(F,\cO_{F}(c_1-D-h)\big)\ge1$. In particular $1\le \epsilon_2\le\epsilon_3$, hence
$$
h^0\big(F,\cO_{F}(c_1-D)\big)=\epsilon_1\epsilon_2+\epsilon_1\epsilon_3+\epsilon_2\epsilon_3+\epsilon_1+\epsilon_2+\epsilon_3+1.
$$
Taking again the cohomology of the above sequence twisted by $\cO_F(c_1-D-h)$ we obtain the exact sequence (recall that $c_1-D$ is not effective)
\begin{align*}
 0\longrightarrow H^0(S,\cO_S(c_1-D-h)\big)&\longrightarrow H^1(F,\cO_F(c_1-D-2h)\big)\mapright\varphi\\
\longrightarrow& H^1(F,\cO_F(c_1-D-h)\big)\longrightarrow H^1(S,\cO_S(c_1-D-h)\big).
\end{align*}

If $\varphi$ is not surjective then necessarily $1\le h^1(S,\cO_S(c_1-D-h)\big)\le \eta_2(D)\le 1$, thus equality must hold. In particular $D=0$, $
h^0\big(S,\cO_{S}(c_1-D-h)\big)=h^0\big(Z,\cO_Z\big)\ge1$ and 
$$
h^0\big(S,\cO_{S}(c_1-D-h)\big)=h^1\big(F,\cO_{F}(c_1-D-2h)\big)-h^1\big(F,\cO_{F}(c_1-D-h)\big)+1.
$$
If $\epsilon_2=1$, then $h^1\big(F,\cO_{F}(c_1-D-2h)\big)=0$ and $h^1\big(F,\cO_{F}(c_1-D-h)\big)\ge1$, hence $h^0\big(S,\cO_{S}(c_1-D-h)\big)\le0$, a contradiction. Thus $2\le \epsilon_2\le\epsilon_3$, hence 
$$
h^0\big(S,\cO_{S}(c_1-D-h)\big)=\epsilon_1\epsilon_2+\epsilon_1\epsilon_3+\epsilon_2\epsilon_3-\epsilon_1-\epsilon_2-\epsilon_3+2.
$$
The equality $h^0\big(S,\cO_{S}(c_1-D)\big)=h^0\big(Z,\cO_Z\big)=h^0\big(S,\cO_{S}(c_1-D-h)\big)$ implies $2\epsilon_1+2\epsilon_2+2\epsilon_3+1=0$, which is not possible, because the $\epsilon_i$'s are integers.

If $\varphi$ is surjective, then 
$$
h^0\big(S,\cO_{S}(c_1-D-h)\big)=h^1\big(F,\cO_{F}(c_1-D-2h)\big)-h^1\big(F,\cO_{F}(c_1-D-h)\big),
$$
and we can easily again infer $2\le \epsilon_2\le\epsilon_3$, hence 
$$
h^0\big(S,\cO_{S}(c_1-D-h)\big)=\epsilon_1\epsilon_2+\epsilon_1\epsilon_3+\epsilon_2\epsilon_3-\epsilon_1-\epsilon_2-\epsilon_3+1.
$$
If $h^0\big(Z,\cO_Z\big)=h^0\big(S,\cO_{S}(c_1-D-h)\big)\pm1$ we obtain $2\epsilon_1+2\epsilon_2+2\epsilon_3\pm1=0$ which is not possible.

If $h^0\big(Z,\cO_Z\big)=h^0\big(S,\cO_{S}(c_1-D-h)\big)$, then $\epsilon_1+\epsilon_2+\epsilon_3=0$, thus $\epsilon_1=-\epsilon_2-\epsilon_3$. Substituting in the expression of $h^0\big(S,\cO_{S}(c_1-D)\big)$ we obtain 
  $$
  h^0\big(S,\cO_{S}(c_1-D)\big)=-\epsilon_2^2-\epsilon_2\epsilon_3-\epsilon_3^2+1\le -11
  $$
  an absurd.

We conclude that the assumption $E\ne\emptyset$ yields a contradiction. This completes the proof of the statement.
\end{proof}

We are now ready to prove the claimed effectiveness of $c_1-D$. 

\begin{proposition}
  \label{pLower6-1}
  If $\cE$ is an indecomposable initialized aCM bundle of rank $2$ on $F$, $s\in H^0\big(F,\cE\big)$ and $D$ denotes the component of codimension $1$ of $(s)_0$, if any, then $c_1\ge c_1-D\ge0$. 

  In particular if $c_1=0$, then the zero--locus of each section $s\in H^0\big(F,\cE\big)$ has codimension $2$. 
\end{proposition}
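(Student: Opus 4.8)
The plan is to argue by contradiction, so assume $c_1-D$ is not effective. By the preceding Lemma we then get $E=\emptyset$, hence $(s)_0=D$, and since $(s)_0\ne\emptyset$ by Lemma~\ref{lNonEmpty6-1} we conclude $D\ne 0$. With $E=\emptyset$ we have $\cI_{E\vert F}\cong\cO_F$, so \eqref{seqIdeal} is the short exact sequence of line bundles
$$
0\longrightarrow\cO_F(D)\longrightarrow\cE\longrightarrow\cO_F(c_1-D)\longrightarrow 0.
$$
It cannot split, for otherwise $\cE\cong\cO_F(D)\oplus\cO_F(c_1-D)$ would be decomposable; hence
$$
0\ne\Ext^1\big(\cO_F(c_1-D),\cO_F(D)\big)\cong H^1\big(F,\cO_F(2D-c_1)\big).
$$

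Next I would convert this non--vanishing into numerical constraints. By K\"unneth, $H^1$ of an invertible sheaf on $F$ is non--zero exactly when one of its three bidegrees is $\le-2$ while the other two are $\ge 0$; applying this to $2D-c_1=(2\delta_1-\alpha_1)h_1+(2\delta_2-\alpha_2)h_2+(2\delta_3-\alpha_3)h_3$ and using $\delta_i\ge 0$, there is an index $i$ with $\alpha_i-\delta_i\ge\delta_i+2\ge 2$, i.e. $c_1-D$ has a bidegree $\ge 2$. Up to the $S_3$--action I may then write $c_1-D=\gamma_1h_1+\gamma_2h_2+\gamma_3h_3$ with $\gamma_1\le\gamma_2\le\gamma_3$, so that $\gamma_1\le-1$ (non--effectiveness) and $\gamma_3\ge 2$. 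Since $E=\emptyset$, \eqref{L2} gives $h^1\big(F,\cO_F(c_1-D-h)\big)=0$; expanding by K\"unneth and using $\gamma_1-1\le-2$ (so $h^0\big(\cO_{\p1}(\gamma_1-1)\big)=0$) and $\gamma_3-1\ge 1$, this equation reduces to $h^1\big(\cO_{\p1}(\gamma_1-1)\big)h^0\big(\cO_{\p1}(\gamma_2-1)\big)h^0\big(\cO_{\p1}(\gamma_3-1)\big)=0$, which forces $h^0\big(\cO_{\p1}(\gamma_2-1)\big)=0$, i.e. $\gamma_2\le 0$.

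Now the three bidegrees of $c_1-D-2h$ are $\gamma_1-2\le-3$, $\gamma_2-2\le-2$ and $\gamma_3-2\ge 0$, so by K\"unneth
$$
h^2\big(F,\cO_F(c_1-D-2h)\big)\ \ge\ h^1\big(\cO_{\p1}(\gamma_1-2)\big)\,h^1\big(\cO_{\p1}(\gamma_2-2)\big)\,h^0\big(\cO_{\p1}(\gamma_3-2)\big)\ >\ 0,
$$
contradicting \eqref{L4}, since $D\ne 0$ makes $\eta_2(D)=0$ there. This contradiction proves $c_1-D\ge 0$, and since $D\ge 0$ we also get $c_1=(c_1-D)+D\ge c_1-D\ge 0$. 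Finally, if $c_1=0$ then for every section $s$ we have $c_1-D=-D\ge 0$ together with $D\ge 0$, hence $D=0$, so $(s)_0=E$ has codimension $2$ (and is non--empty by Lemma~\ref{lNonEmpty6-1}).

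The step I expect to be the crux is the middle one: extracting exactly the three inequalities $\gamma_1\le-1$, $\gamma_2\le 0$, $\gamma_3\ge 2$ on the sorted bidegrees of $c_1-D$. Each is individually cheap — two come from "non--effective" and "indecomposable", the third from a single aCM vanishing — but it is precisely this combination that forces $c_1-D-2h$ into the range where $h^2$ is non--zero, contradicting the remaining aCM vanishing recorded in \eqref{L4}. Everything else is routine K\"unneth bookkeeping together with the formal properties of the Koszul sequence \eqref{seqIdeal}.
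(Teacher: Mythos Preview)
Your proof is correct and follows essentially the same strategy as the paper: assume $c_1-D$ non--effective, reduce to the split sequence of line bundles, use indecomposability to get $H^1\big(F,\cO_F(2D-c_1)\big)\ne0$, and then contradict the aCM vanishings via K\"unneth. The one difference is organizational: by invoking the already--established equalities \eqref{L2} and \eqref{L4} (which hold for every effective $D$), you handle all cases uniformly, whereas the paper first disposes of $D\in\vert 2h_2+2h_3\vert$ and $D\in\vert 2h_3\vert$ by separate ad hoc arguments before running the same endgame on the remaining $D$'s --- so your version is in fact a bit cleaner.
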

\begin{proof}

Assume again that $c_1-D$ is non--effective, so that  $E=\emptyset$ due to the above Lemma. Thus $\cI_{E\vert F}\cong\cO_F$, hence Sequence \eqref{seqIdeal} becomes
  \begin{equation}
    \label{seqIdealO}
    0\longrightarrow \cO_F(D)\longrightarrow \cE\longrightarrow \cO_{F}(c_1-D)\longrightarrow 0.
  \end{equation}
Thanks to Lemma \ref{lNonEmpty6-1} we know that $D$ is non--zero.   If $D\in\left\vert2h_2+2h_3\right\vert$, then the cohomology of Sequence \eqref{seqIdealO} twisted by $\cO_F(-2h)$ would give
  $$
  0=h^0\big(F,\cO_F(c_1-D)\big)\ge h^0\big(F,\cO_F(c_1-D-2h)\big)=h^1\big(F,\cO_F(D-2h)\big)=1,
  $$
  a contradiction. If $D\in\left\vert2h_3\right\vert$, then $h^1\big(F,\cO_F(c_1-D-2h)\big)=h^2\big(F,\cO_F(D-2h)\big)=1$ contradicting the condition $\epsilon_1\le-1$. 

  In the remaining cases we know that $h^i\big(F,\cO_F(D-h)\big)=h^i\big(F,\cO_F(D-2h)\big)=0$, $i=1,2$. Taking the cohomology of Sequence \eqref{seqIdealO} suitably twisted, we obtain $h^i\big(F,\cO_F(c_1-D-h)\big)=h^i\big(F,\cO_F(c_1-D-2h)\big)=0$ in the same range, because both $\cE$ and $\cO_F(D)$ are aCM. 

  If Sequence \eqref{seqIdealO} does not split, then
  $$
  H^1\big(F,\cO_F(2D-c_1)\big)=\Ext_{F}^1\big(\cO_{F}(c_1-D),\cO_F(D)\big)\ne0.
  $$
  Thus $\delta_j-\epsilon_j\le-2$ for exactly one $j=1,2,3$. Since
  $\delta_1\ge0$ and $\epsilon_1\le-1$ (as we assumed above) such a
  $j$ is not $1$. We can assume $\delta_3-\epsilon_3\le-2$, hence we
  obtain $\epsilon_3\ge2$. If $\epsilon_2\ge1$, then
  $h^1\big(F,\cO_F(c_1-D-h)\big)\ne0$. If $\epsilon_2\le0$, then
  $h^2\big(F,\cO_F(c_1-D-2h)\big)\ne0$. In both cases we have a contradiction. 
  \end{proof}

\section{An upper bound on the first Chern class}\label{sUpperChern6-1}
In this section we will find a bound from above for the first Chern class $c_1$ of an indecomposable initialized aCM bundle $\cE$ of rank $2$ on $F$. Thus $h^i\big(F,\cE^\vee(-h)\big)=h^{3-i}\big(F,\cE(-h)\big)=0$ for $i\ge1$. As pointed out in Lemma \ref{lNonEmpty6-1} the zero locus of a general section $s\in H^0\big(F,\cE\big)$ is non--empty. We will still decompose it as $(s)_0=E\cup D$, where $D$ is the divisorial part and $E$ has pure codimension $2$.

We have 
$$
h^0\big(F,\cI_{E\vert F}(-D-h)\big)\le h^0\big(F,\cO_{F}(-D-h)\big)=0
$$
(see Sequence \eqref{seqStandard}). Thus the cohomology of Sequence \eqref{seqIdeal} twisted by $\cO_F(-c_1)$, the standard isomorphism $\cE^\vee\cong\cE(-c_1)$ and  the effectiveness of $c_1-D$ proved in Proposition \ref{pLower6-1} imply
$$
\chi\big(\cE^\vee(-h)\big)=h^0\big(F,\cE^\vee(-h)\big)=h^0\big(F,\cO_F(D-c_1-h)\big)=0.
$$
Such an equality, Formula \eqref{RRgeneral} and the equalities
\begin{gather*}
  c_1^3=6\alpha_1\alpha_2\alpha_3,\\ 
  c_1^2h=2(\alpha_1\alpha_2+\alpha_1\alpha_3+\alpha_2\alpha_3),\\
  c_1h^2=2(\alpha_1+\alpha_2+\alpha_3),\\ 
  \omega_2c_1=4(\alpha_1+\alpha_2+\alpha_3),
\end{gather*}
finally imply
\begin{equation}
  \label{c_1c_2-6-1}
  c_1c_2=2\alpha_1\alpha_2\alpha_3.
\end{equation}
Since $\cE$ is initialized and aCM we also have $h^i\big(F,\cE(-2h)\big)=0$ for $i\le 2$. Thus $h^i\big(F,\cE^\vee\big)=h^{3-i}\big(F,\cE(-2h)\big)=0$, for $i\ge 1$. Let $D\ne0$: $H^0\big(F,\cI_{E\vert F}\big)$ is contained in the kernel of the natural injective map $H^0\big(F,\cO_F\big)\to H^0\big(E,\cO_E\big)$, hence $h^0\big(F,\cI_{E\vert F}\big)=h^0\big(F,\cI_{E\vert F}(-D)\big)=0$. If $D\ne0$, then
$$
h^0\big(F,\cI_{E\vert F}(-D)\big)\le h^0\big(F,\cO_{F}(-D)\big)=0.
$$
We conclude that $h^0\big(F,\cI_{E\vert F}(-D)\big)=0$ without restrictions on $D$. Let
$$
e(c_1,D):=\left\lbrace\begin{array}{ll} 
    0\quad&\text{if $D\ne c_1$,}\\
    1\quad&\text{if $D=c_1$.}
  \end{array}\right.
$$
Since $(s)_0\ne\emptyset$, it follows that 
$$
h^0\big(F,\cE^\vee\big)=h^0\big(F,\cO_F(D-c_1)\big)=e(c_1,D)
$$
Combining as above Sequences \eqref{seqStandard}, \eqref{seqIdeal} and Formulas \eqref{RRgeneral}, \eqref{c_1c_2-6-1} we finally obtain
\begin{equation}
  \label{hc_2-6-1}
  h c_2=\alpha_1\alpha_2\alpha_3+(1-\alpha_1)(1-\alpha_2)(1-\alpha_3)+1-e(c_1,D).
\end{equation}
In what follows we will often combine the above equalities with the trivial ones
\begin{equation}
  \label{AlphaBeta}
  \left\lbrace\begin{array}{l} 
      \alpha_1\beta_1+\alpha_2\beta_2+\alpha_3\beta_3=c_1c_2\\
      \beta_1+\beta_2+\beta_3=h c_2.
    \end{array}\right.
\end{equation}
Finally, assume that $E\ne\emptyset$. Its class in $A^2(F)$ is $c_2(\cE(-D))=c_2-c_1D+D^2$. Since $\vert h_i\vert$, $i=1,2,3$ is base--point--free on $F$, $D\ge0$ and $c_1-D\ge0$ (see Proposition \ref{pLower6-1}), it follows that
\begin{equation}
  \label{positivity}
  \beta_i\ge \beta_i-h_i(c_1-D)D=h_i c_2-h_i(c_1-D)D=h_i c_2(\cE(-D))\ge0,\qquad i=1,2,3.
\end{equation}
The numbers $c_1c_2$ and $h c_2$ will be very important in what follows.

\begin{lemma}
  \label{lInvariantE}
  Let $\cE$ be a vector bundle of rank $2$ on $F$ and $s\in H^0\big(F,\cE\big)$ a section such that $E:=(s)_0$ has codimension $2$. Then it has no embedded components and
  $$
  \deg(E)=h c_2,\qquad
  p_a(E)=\frac12c_1c_2-hc_2+1.
  $$
 \end{lemma}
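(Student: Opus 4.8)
The plan is to read off $\deg(E)$ and $p_a(E)$ from the Hilbert polynomial of $E$, using the two short exact sequences \eqref{seqIdeal} and \eqref{seqStandard} together with Riemann--Roch, so no (semi)stability or aCM hypothesis is needed. First I would observe that, since $E=(s)_0$ has codimension $2$ and $\rk(\cE)=2$, the section $s$ is regular; hence $E$ is cut out locally by two equations, i.e. it is a local complete intersection, in particular Cohen--Macaulay and equidimensional, so it has no embedded components. Moreover, being the zero scheme of a regular section of a rank $2$ bundle, its fundamental class in $A^2(F)$ is $c_2(\cE)=c_2$ (this is the case $D=0$ of the class computation made just before \eqref{positivity}). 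Since the degree of a one--dimensional scheme equals the intersection of its class with $h$, this gives at once $\deg(E)=h\cdot[E]=hc_2$.

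For the arithmetic genus I would argue purely numerically. By definition $p_a(E)=1-\chi\big(F,\cO_E\big)$, and Sequence \eqref{seqStandard} gives $\chi(\cO_E)=\chi(\cO_F)-\chi(\cI_{E\vert F})=1-\chi(\cI_{E\vert F})$, hence $p_a(E)=\chi(\cI_{E\vert F})$. Now twist the Koszul sequence \eqref{seqIdeal} (here $D=0$, so it reads $0\to\cO_F\to\cE\to\cI_{E\vert F}(c_1)\to0$) by $\cO_F(-c_1)$ and use the standard isomorphism $\cE^\vee\cong\cE(-c_1)$, valid for rank $2$ bundles: this yields $\chi(\cI_{E\vert F})=\chi(\cE^\vee)-\chi(\cO_F(-c_1))$, hence $p_a(E)=\chi(\cE^\vee)-\chi(\cO_F(-c_1))$.

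It then remains only to evaluate the two Euler characteristics. K\"unneth's formula gives $\chi(\cO_F(-c_1))=(1-\alpha_1)(1-\alpha_2)(1-\alpha_3)$. For $\chi(\cE^\vee)$ I would apply Riemann--Roch \eqref{RRgeneral} to $\cE^\vee$, which has rank $2$, $c_1(\cE^\vee)=-c_1$, $c_2(\cE^\vee)=c_2$ and $c_3(\cE^\vee)=0$, substituting $\omega_1=-2h$, $\omega_1\omega_2=-24$, $h^3=6$ together with the identities $c_1h^2=2(\alpha_1+\alpha_2+\alpha_3)$, $c_1^2h=2(\alpha_1\alpha_2+\alpha_1\alpha_3+\alpha_2\alpha_3)$, $c_1^3=6\alpha_1\alpha_2\alpha_3$ and $\omega_2c_1=4(\alpha_1+\alpha_2+\alpha_3)$ recorded above. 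Expanding the difference $\chi(\cE^\vee)-\chi(\cO_F(-c_1))$, all the monomials in the $\alpha_i$ cancel and one is left precisely with $p_a(E)=\frac12 c_1c_2-hc_2+1$. There is no real conceptual obstacle: the only slightly delicate point is this final bookkeeping in Riemann--Roch (and remembering that the two claimed quantities are, respectively, the linear and constant coefficients of $\chi\big(F,\cO_E(th)\big)$, so that $\deg(E)$ and $p_a(E)$ come out correctly); everything else is formal manipulation of the two exact sequences.
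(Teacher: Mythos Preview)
Your proof is correct, but it takes a different route from the paper's for the arithmetic genus. The paper argues by adjunction: since $E$ is a local complete intersection with $\cN_{E\vert F}\cong\cE\otimes\cO_E$, one has $\omega_E\cong\omega_F\otimes\det(\cN_{E\vert F})\cong\cO_E(c_1-2h)$; taking degrees against the class $[E]=c_2$ gives $2p_a(E)-2=(c_1-2h)c_2$, hence $p_a(E)=\tfrac12 c_1c_2-hc_2+1$ in one line. Your approach instead expresses $p_a(E)=\chi(\cI_{E\vert F})=\chi(\cE^\vee)-\chi(\cO_F(-c_1))$ via the twisted Koszul sequence and then expands Riemann--Roch for $\cE^\vee$, cancelling all the $\alpha_i$--monomials against those of $(1-\alpha_1)(1-\alpha_2)(1-\alpha_3)$. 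This is perfectly valid (and I checked the cancellation does go through), but it trades a single adjunction step for a somewhat heavier Chern--class computation; the paper's argument is shorter and uses only the normal bundle, while yours has the minor advantage of never invoking the dualizing sheaf of $E$.
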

\begin{proof}
We already noticed that if $E$ has codimension $2$, then it has no embedded components. 

The assertion on $\deg(E)$ is immediate, because the class in $A^2(F)$ is $c_2$. 
Let us compute $p_a(E)$. Adjunction formula on $F$ yields $\omega_E\cong\omega_F\otimes\det({\cN}_{E\vert F})$. We have ${\cN}_{E\vert F}\cong\cE\otimes\cO_E$, thus $\det({\cN}_{E\vert F})\cong\cO_E(c_1)$. We also have that $\omega_F\cong\cO_F(-2h)$. Equating the degrees of the two members of the above formula we obtain also the second equality.
\end{proof}

Now we turn our attention on the upper bound on $c_1$.

\begin{proposition}
  \label{pUpper6-1}
  If $\cE$ is an indecomposable initialized aCM bundle of rank $2$ on $F$, then either $2h-c_1\ge0$ or $c_1=h_1+2h_2+3h_3$ up to permutations of the $h_i$'s.
  
  Moreover, if either $c_1=2h$ or $c_1=h_1+2h_2+3h_3$ up to permutations of the $h_i$'s, then $\cE$ is Ulrich, hence the zero--locus $E:=(s)_0$ of a general section $s\in H^0\big(F,\cE\big)$ is a smooth curve. 
\end{proposition}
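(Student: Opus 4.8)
The plan is to treat the statement in three steps: a Riemann--Roch identity valid for any indecomposable initialized aCM bundle of rank $2$ on $F$, the numerical bound on $c_1$, and the Ulrich conclusion. For the first step, note that since $\cE$ is aCM and initialized we have $h^1\big(F,\cE\big)=h^2\big(F,\cE\big)=0$, while $h^3\big(F,\cE\big)=h^0\big(F,\cE^\vee(-2h)\big)=h^0\big(F,\cE(-c_1-2h)\big)$ vanishes too, the inclusion $\cO_F(-c_1-2h)\hookrightarrow\cO_F(-h)$ (valid because $c_1\ge0$) bounding it by $h^0\big(F,\cE(-h)\big)=0$. Thus $h^0\big(F,\cE\big)=\chi(\cE)$, and substituting the expressions for $c_1^3,\,c_1^2h,\,c_1h^2,\,\omega_2c_1$ recalled before \eqref{c_1c_2-6-1}, together with \eqref{c_1c_2-6-1} and \eqref{hc_2-6-1}, into \eqref{RRgeneral}, one obtains the closed form
\begin{equation*}
h^0\big(F,\cE\big)=\chi(\cE)=2(\alpha_1+\alpha_2+\alpha_3)+e(c_1,D).
\end{equation*}
Because $\cE$ is initialized, every element of $H^0\big(F,\cE\big)$ is a minimal generator of $H^0_*\big(F,\cE\big)$, so $h^0\big(F,\cE\big)\le m(\cE)\le\rk(\cE)\deg(F)=12$; as $e(c_1,D)\ge0$ this forces $\alpha_1+\alpha_2+\alpha_3\le6$.

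For the bound, assume $\alpha_1\le\alpha_2\le\alpha_3$. By Proposition \ref{pLower6-1} and Remark \ref{rBoundAlpha} one has $0\le\alpha_1\le2$ and $\alpha_i\le4$, and we have just seen $\alpha_1+\alpha_2+\alpha_3\le6$; if $\alpha_3\le2$ there is nothing to prove, so assume $\alpha_3\in\{3,4\}$ — only finitely many triples remain. For each of them every possible divisorial part $D$ satisfies $\delta_i\le2$ by \eqref{delta}, while $\alpha_3\ge3$, so $D\ne c_1$ and $e(c_1,D)=0$; then \eqref{c_1c_2-6-1} and \eqref{hc_2-6-1} fix $c_1c_2=2\alpha_1\alpha_2\alpha_3$ and $hc_2$ completely. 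Feeding these into \eqref{AlphaBeta} and using $\beta_i\ge0$ — which holds in general, by \eqref{positivity} when $E\ne\emptyset$ and because $c_2=D(c_1-D)$ is effective when $E=\emptyset$ — together with $\deg E=hc_2\ge0$ and $p_a(E)=\alpha_1\alpha_2\alpha_3-hc_2+1\ge0$ from Lemma \ref{lInvariantE}, already discards several triples (for instance $c_1=3h_3$ and $c_1=4h_3$, which have $hc_2<0$). The remaining ones are eliminated one by one, pinning down $D$ and $[E]=c_2-D(c_1-D)$ from the numerical data and then: realizing $E$ as the complete intersection of two effective divisors $\Gamma,\Gamma'$ on $F$ with $[\Gamma]+[\Gamma']=c_1-D$, so that the uniqueness in Theorem \ref{tSerre} (available because $H^1\big(F,\cO_F(-c_1)\big)=0$ for every triple at hand) forces $\cE\cong\cO_F(\Gamma)\oplus\cO_F(\Gamma')$, contrary to indecomposability; or, when $h^0\big(F,\cE\big)=12$ and $[E]$ is a positive multiple of some $h_ih_j$, using that $\cE$ is then Ulrich, hence globally generated, while no smooth curve can carry such a class (the projections onto the two relevant rulings would be constant on $E$, forcing it into a single line); or, when $E=\emptyset$, observing that \eqref{seqIdealO} is then pulled back from one of the three $\p1$ factors exactly as in the proof of Lemma \ref{lNonEmpty6-1}, so that $\cE$ again splits. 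A further inspection of the restriction of $\cE$ to the rulings, combined with $\delta_i\le2$, disposes of the last triple not covered by the above. In the end only $c_1=h_1+2h_2+3h_3$ survives.

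Finally, suppose $c_1=2h$ or $c_1=h_1+2h_2+3h_3$. In either case $\alpha_1+\alpha_2+\alpha_3=6$ and $e(c_1,D)=0$, since $D=c_1$ cannot occur: $(2,2,2)$ does not appear in \eqref{delta}, and $h_1+2h_2+3h_3$ has a coordinate exceeding $2$. Hence $h^0\big(F,\cE\big)=12=\rk(\cE)\deg(F)$, i.e. $\cE$ is Ulrich; being Ulrich it is globally generated, so by Bertini's theorem the zero locus $E$ of a general section is smooth of pure codimension $2$, hence a smooth curve — non--empty because $\deg E=hc_2$ equals $8$ when $c_1=2h$ and $7$ when $c_1=h_1+2h_2+3h_3$.

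I expect the main obstacle to be the finite case analysis in the proof of the bound: the purely numerical constraints ($\alpha_1+\alpha_2+\alpha_3\le6$, $\beta_i\ge0$, $hc_2\ge0$, $p_a(E)\ge0$) do not by themselves rule out all triples with $\alpha_3\ge3$ — several, such as $(0,1,3)$, $(1,1,3)$, $(0,2,3)$, $(1,1,4)$, survive them — and eliminating each one requires one of the geometric arguments sketched above rather than Riemann--Roch alone.
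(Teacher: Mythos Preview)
Your opening and closing steps are correct: the identity $h^0\big(F,\cE\big)=\chi(\cE)=2(\alpha_1+\alpha_2+\alpha_3)+e(c_1,D)$ is valid (since $h^3\big(F,\cE\big)=h^0\big(F,\cE(-c_1-2h)\big)=0$ once $c_1\ge0$), and the Ulrich conclusion for $c_1=2h$ and $c_1=h_1+2h_2+3h_3$ follows exactly as you write. The trouble lies in the middle.

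First, two of the constraints you invoke are false. The inequality $p_a(E)\ge0$ does not hold in general: $E$ may well be disconnected (indeed in the cases $(0,3,3)$ and $(0,2,4)$ the paper shows $E$ is a disjoint union of $c\ge4$ skew lines, with $p_a(E)=1-c\le-3$), so this cannot be used to discard triples. Likewise your argument (b) --- ``no smooth curve can carry the class $c\,h_ih_j$ because the two projections would be constant'' --- only shows that each \emph{connected component} of $E$ is a line in that class; it does not preclude $E$ being $c$ such lines. So neither device eliminates $(0,2,3)$, $(0,3,3)$, $(0,2,4)$, etc., and you are left with at least eight triples (and for each one several possible divisorial parts $D$), none of which is actually dispatched by the sketches you give.

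The missing idea is the dichotomy on Castelnuovo--Mumford regularity that the paper uses. If $\cE$ is \emph{not} regular then $h^0\big(F,\cE^\vee(h)\big)=h^3\big(F,\cE(-3h)\big)\ne0$, so $\cE^\vee(th)$ is initialized for some $t\le1$; applying Proposition~\ref{pLower6-1} to this bundle gives $2th-c_1\ge0$, hence $2h-c_1\ge0$ with no further work. If $\cE$ \emph{is} regular then it is globally generated, so automatically $D=0$ and $E$ is smooth; moreover $h^i\big(F,\cE^\vee(h)\big)=0$ for all $i$, whence $\chi(\cE^\vee(h))=0$, which via \eqref{RRgeneral}, \eqref{c_1c_2-6-1}, \eqref{hc_2-6-1} gives the \emph{equality} $\alpha_1+\alpha_2+\alpha_3=6$ rather than merely $\le6$. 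This collapses the list with $\alpha_3\ge3$ to four triples --- $(1,2,3)$, $(1,1,4)$, $(0,3,3)$, $(0,2,4)$ --- and the duality $\cE\mapsto\cE^\vee(2h)$ reduces $(1,1,4)$ to $(0,3,3)$. For the two surviving cases $\alpha_1=0$ one then knows $E$ is a disjoint union of lines in class $h_2h_3$, and the paper finishes by producing a section of $\cE(-h_2-2h_3)$ and analysing its vanishing locus to force $\cE$ to split; your complete--intersection argument (a) does not apply here, since no pair of effective divisors summing to $c_1$ meets in a curve of class $c\,h_2h_3$ for $c\ge4$. In short, the Riemann--Roch bound $\sum\alpha_i\le6$ is a nice observation, but without the regularity split you do not get $D=0$ nor the equality $\sum\alpha_i=6$, and the resulting case analysis is both much larger and not covered by the mechanisms you list.
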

\begin{proof}
We distinguish two cases according to whether $\cE$ is regular in the sense of Castelnuovo--Mumford or not.

In the latter case we infer $h^0\big(F,\cE^\vee(h)\big)=h^3\big(F,\cE(-3h)\big)\ne0$ because $\cE$ is aCM. Thus, if $t\in\bZ$ is such that $\cE^\vee(th)$ is initialized, we know that $t\le1$. Since $\cE^\vee(th)$ is obviously also aCM, we know that  $2th-c_1=c_1(\cE^\vee(th))$ is effective, due to Proposition \ref{pLower6-1}. We conclude that the same is true for $2h-c_1$, because $t\le1$.

Now we assume that $\cE$ is regular, hence it is globally generated. Thus the zero--locus $E:=(s)_0$ of a general section $s\in H^0\big(F,\cE\big)$ is a smooth curve, i.e. the divisorial part $D$ is $0$. Let $\alpha_1\le\alpha_2\le\alpha_3$: we already know from Remark \ref{rBoundAlpha} that $0\le \alpha_1\le2$ and $\alpha_2,\alpha_3\le 4$. We want to examine the cases $\alpha_3=3,4$. In any case we have that $h^i\big(F,\cE^\vee(h)\big)=h^{3-i}\big(F,\cE(-3h)\big)=0$, $i=0,1,2,3$, because $\cE$ is assumed aCM, regular and initialized. Combining the above remarks with equality \eqref{RRgeneral}, Formulas  \eqref{c_1c_2-6-1}, \eqref{hc_2-6-1} and the vanishing of $e(c_1,D)$ (recall that $D=0$ and $c_1\ne0$), we finally obtain 
  $$
  0=\chi(\cE^\vee(h))=12-2\alpha_1-2\alpha_2-2\alpha_3.
  $$
  Thus we have the following possibilities for $(\alpha_1,\alpha_2,\alpha_3)$ with $\alpha_3\ge3$:  $(1,2,3)$, $(1,1,4)$, $(0,3,3)$, $(0,2,4)$.

  If $c_1=h_1+2h_2+3h_3$ up to permutations, then Formula
  \eqref{hc_2-6-1} give $h c_2=7$ and $c_1c_2=12$. Substituting in
  Equality \eqref{RRgeneral} we obtain $\chi(\cE)=12$. Since $\cE$ is
  aCM, it follows that $h^1\big(F,\cE\big)=h^2\big(F,\cE\big)=0$. Since $\cE$ is initialized, it follows that $h^3\big(F,\cE\big)=h^0\big(F,\cE(-2h-c_1)\big)=0$.  We
  conclude that $h^0\big(F,\cE\big)=12$ and
  $\cE$ turns out to be Ulrich. Similarly, one can also prove that $\cE$ is Ulrich in all the other cases.

We now prove that if  $c_1=\alpha_1h_1+\alpha_2h_2+\alpha_3h_3$ is either $h_1+h_2+4h_3$ or $2h_2+4h_3$ or $3h_2+3h_3$ up to permutations, then $\cE$ splits as a sum of invertible sheaves. 
  
  Let $c_1=h_1+h_2+4h_3$.  Then we know that ${\cE}^\vee(2h)$ is Ulrich (see Lemma 2.4 of \cite{C--H2}): in particular it is aCM and initialized and we have $c_1(\cE^\vee(2h))=3h_1+3h_2$. 
  
  Hence it suffices to examine only the two last cases $(0,3,3)$, $(0,2,4)$ where $\alpha_1=0$. Recall that $E$ is a smooth curve. Thus the System \eqref{AlphaBeta} and the Inequalities \eqref{positivity}  with $D=0$ force $c_2=ch_2h_3$ where $c:=\alpha_2\alpha_3-4$: due to the restrictions on $\alpha_2$ and $\alpha_3$ we have that $c$ is either $4$, or $5$. 
  
  Lemma \eqref{lInvariantE} implies $\deg(E)=c$ and $p_a(E)=1-c\le -3$. Since $E$ is smooth, it follows that it is necessarily the non--connected  union of $c$ pairwise skew lines whose class in $A^2(F)$ is $h_2h_3$.  In particular
  \begin{equation}
    \label{Decomp}
    \cO_E(a_1h_1+a_2h_2+a_3h_3)\cong\cO_{\p1}(a_1)^{\oplus c}.
  \end{equation}
  
Recalling the possible values of $c_1$ we have $h^0\big(F,\cO_F(c_1-h_2-2h_3)\big)=6$. On the one hand, the cohomology of Sequence \eqref{seqStandard} twisted by $\cO_F(c_1-h_2-2h_3)$ gives rise to a monomorphism 
$$
\vartheta\colon H^0\big(F,\cI_{E\vert F}(c_1-h_2-2h_3)\big) \to H^0\big(F,\cO_F(c_1-h_2-2h_3)\big). 
$$
We fix three points $p_1$, $p_2$, $p_3$ on three different lines in $E$. We have that $c_1-h_2-2h_3$ is either $h_2+2h_3$ or $2h_2+h_3$. 

Let $V_i\subseteq H^0\big(F,\cO_F(c_1-h_2-2h_3)\big)$ be the subspace of sections corresponding to divisors though the points $p_1,\dots,p_i$ . We have $V:=V_3\subseteq V_2\subseteq V_1\subseteq H^0\big(F,\cO_F(c_1-h_2-2h_3)\big)$. On the one hand, at each step the dimension can go down at most by $1$, hence $\dim(V)\ge h^0\big(F,\cO_F(c_1-h_2-2h_3)\big)-3=3$. On the other hand, using the fact that $\cO_F(h_i)$ is globally generated, it is easy to check that the above inclusions are all strict, thus we also have $\dim(V)\le3$. We conclude that $\dim(V)=3$.

We trivially have that $H^0\big(F,\cI_{E\vert F}(c_1-h_2-2h_3)\big)\cong\im(\vartheta)\subseteq V$, thus  $h^0\big(F,\cI_{E\vert F}(c_1-h_2-2h_3)\big)\le3$. Since $h^0\big(E,\cO_E(c_1-h_2-2h_3)\big)=c$ due to the Isomorphism \eqref{Decomp} above, it follows that $h^0\big(F,\cI_{E\vert F}(c_1-h_2-2h_3)\big)\ge6-c$.

The cohomology of Sequence \eqref{seqIdeal} for $s$ twisted by $\cO_F(-h_2-2h_3)$ thus yields
$$
3\ge h^0\big(F,\cE(-h_2-2h_3)\big)=h^0\big(F,\cI_{E\vert F}(c_1-h_2-2h_3)\big)\ge6-c\ge1.
$$
We have that $c_1(\cE(-h_2-2h_3))=c_1-2h_2-4h_3$ and $c_2(\cE(-h_2-2h_3))=0$. 

Let $\sigma\in H^0\big(F,\cE(-h_2-2h_3)\big)$ be a general section. As usual we can write $(\sigma)_0=\Theta\cup \Xi$ where $\Theta$ has codimension 2 (or it is empty) and $\Xi$ is a, possibly $0$, effective divisor. 

The cohomology of Sequence \eqref{seqStandard} yields
$$
h^0\big(F,\cI_{\Theta\vert F}(c_1-2h_2-4h_3-\Xi)\big)\le h^0\big(F,\cO_F(c_1-2h_2-4h_3-\Xi)\big)
$$
which is zero because $c_1$ is either $2h_2+4h_3$ or $3h_2+3h_3$ and $\Xi$ is effective. The cohomology of Sequence \eqref{seqIdeal} for $\sigma$ implies $h^0\big(F,\cO_F(\Xi)\big)=h^0\big(F,\cE(-h_2-2h_3)\big)\le3$. We deduce that $\Xi\in \vert\xi h_i\vert$ for some integer $0\le \xi\le2$. Moreover $\cE$ is initialized, thus we immediately infer from the monomorphism $\cO_F(\Xi)\to \cE(-h_2-2h_3)$ that $\Xi\in \vert\xi h_i\vert$ where $i\ne1$. 
It follows that
$$
c_2(\cE(-h_2-2h_3-\Xi))=c_2-c_1(h_2+2h_3+\Xi)+(h_2+2h_3+\Xi)^2={\left\lbrace\begin{array}{ll} 
    0\ &\text{if $\alpha_2=2$, $\alpha_3=4$,}\\
    -\xi h_2h_3\ &\text{if $\alpha_2=\alpha_3=3$, $\Xi=\xi h_3$,}\\
    \xi h_2h_3\ &\text{if $\alpha_2=\alpha_3=3$, $\Xi=\xi h_2$.}
  \end{array}\right.}
$$
Recall that $\deg(\Theta)=c_2(\cE(-h_2-2h_3-\Xi))h$. In the first case, $\deg(\Theta)=0$, hence $\Theta=\emptyset$. In the second case, $\deg(\Theta)=-\xi\le0$, hence $\xi=0$: in particular $\Xi=0$ and $\Theta=\emptyset$. In the third case Sequence \eqref{seqIdeal} for $\sigma$ becomes
$$
0\longrightarrow\cO_F(\xi h_2)\longrightarrow\cE(-h_2-2h_3)\longrightarrow\cI_{\Theta\vert F}((1-\xi)h_2-h_3)\longrightarrow0.
$$
Taking the cohomology of  the above sequence twisted by $\cO_F(-2h_1-h_2)$, we would obtain that $h^1\big(F,\cE(-2h)\big)\ge h^1\big(F,\cO_F(-2h_1+(\xi-1)h_2)\big)$, because $h^0\big(F,\cI_{\Theta\vert F}(-2h_1-\xi h_2-h_3)\big)=0$. Thus $\xi=0$ because $\cE$ is aCM. Since $\deg(\Theta)=0$, it again follows that $\Theta=\emptyset$. Thus, in any case
Sequence \eqref{seqIdeal} for $\sigma$ becomes
$$
0\longrightarrow\cO_F(\Xi)\longrightarrow\cE(-h_2-2h_3)\longrightarrow\cO_F(c_1-2h_2-4h_3-\Xi)\longrightarrow0.
$$
The above sequence corresponds to an element of 
$$
\Ext_F^1\big(\cO_F(c_1-2h_2-4h_3-\Xi),\cO_F(\Xi)\big)\cong H^1\big(F,\cO_F(2h_2+4h_3-c_1+2\Xi)\big).
$$
The last space is zero, due to the possible values of $c_1$ and the corresponding values of $\xi$. We conclude that $\cE$ would be decomposable, a contradiction. 
\end{proof}

\section{The proof of the theorem A}
\label{sTheorem}
In this section we will complete the proof of Theorem A stated in the introduction. It only remains to show that if $\cE$ is an indecomposable, initialized, aCM bundle, then its general section vanishes exactly along a curve (see Proposition \ref{pLower6-1}). 

We already checked in the previous sections (see Propositions \ref{pLower6-1} and  \ref{pUpper6-1}) that  either $c_1\ge0$ and $2h-c_1\ge0$ or $c_1=h_1+2h_2+3h_3$ up to permutations of the $h_i$'s for such kind of bundles and that the general section vanishes exactly along a curve when either $c_1=0$, or $2h-c_1=0$, or $c_1=h_1+2h_2+3h_3$ up to permutations of the $h_i$'s.

It follows that we can restrict our attention to the remaining cases. If we assume  $\alpha_1\le\alpha_2\le\alpha_3$, such cases satisfy $\alpha_1\le1\le\alpha_3$. 

\begin{lemma}
  \label{lggDivisor}
  Let $\cE$ be an indecomposable initialized aCM bundle of rank $2$ on $F$ whose general section $s\in H^0\big(F,\cE\big)$ satisfies $(s)_0=E\cup D$ where $E$ has codimension $2$ (or it is empty) and $D\in\vert\delta_1h_1+\delta_2h_2+\delta_3h_3\vert$ is non--zero. Then:
  \begin{enumerate}
  \item  up to permutations
  $$
  (\delta_1,\delta_2,\delta_3)\in\left\{\ (0,0,1), (0,1,1), (0,0,2), (0,1,2)\ \right\};
  $$
  \item $\cE$ is not globally generated;
  \item $E\ne\emptyset$.
\end{enumerate}
\end{lemma}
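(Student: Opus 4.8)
The plan is to dispose of the three assertions using the Koszul sequence \eqref{seqIdeal} and the structure sequence \eqref{seqStandard} attached to the general section, the effectiveness $0\le c_1-D\le c_1$ (Proposition \ref{pLower6-1}), and the bound $\alpha_i\le2$ (Proposition \ref{pUpper6-1}). Note first that since $D\ne0$ the general section has a divisorial component, hence does not vanish in pure codimension $2$; as the general section of a globally generated rank-two bundle would, $\cE$ is not globally generated — this is assertion (2) — and in particular not Ulrich, so $c_1\ne2h$ and $c_1\ne h_1+2h_2+3h_3$ up to permutation, whence $\alpha_i\le2$. For (1), by \eqref{delta} the triple $(\delta_1,\delta_2,\delta_3)$ is, up to permutation, one of $(0,0,0),(0,0,1),(0,1,1),(0,0,2),(0,1,2),(0,2,2)$; the first is excluded since $D\ne0$. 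To rule out $(0,2,2)$, effectiveness of $c_1-D$ with $\alpha_i\le2$ and $c_1\ne2h$ forces $\alpha_2=\alpha_3=2$ and $c_1-D=\alpha_1h_1$, $\alpha_1\in\{0,1\}$; twisting \eqref{seqIdeal} by $\cO_F(-2h)$ and using $h^0(F,\cE(-2h))=h^1(F,\cE(-2h))=0$ (as $\cE$ is initialized and aCM) gives $h^0(F,\cI_{E\vert F}(c_1-D-2h))=h^1(F,\cO_F(-2h_1))=1$, which is absurd since $\cI_{E\vert F}(c_1-D-2h)\subseteq\cO_F(c_1-D-2h)$ and $c_1-D-2h=(\alpha_1-2)h_1-2h_2-2h_3$ has no global sections.

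For (3) I argue by contradiction: suppose $E=\emptyset$, so $\cI_{E\vert F}=\cO_F$ and \eqref{seqIdeal} becomes \eqref{seqIdealO}, a non-split extension by indecomposability, whence $H^1(F,\cO_F(2D-c_1))\ne0$. K\"unneth then forces, after permuting coordinates, the first coordinate of $2D-c_1$ to be $\le-2$ and the other two $\ge0$; with $\delta_i\le\alpha_i\le2$ this yields $\delta_1=0$, $\alpha_1=2$, $\delta_i\le\alpha_i\le2\delta_i$ for $i=2,3$. Also, twisting \eqref{seqIdealO} by $\cO_F(-2h)$ and using $h^2(F,\cE(-2h))=0$ produces $H^2(F,\cO_F(c_1-D-2h))\hookrightarrow H^3(F,\cO_F(D-2h))=0$ (the target vanishing because $D\ne0$), so $c_1-D$ is none of $2h_1,2h_2,2h_3$. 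Running the four possible $D$'s from (1) through these constraints and discarding $c_1=2h$, I expect only $(D,c_1)=((0,0,1),(2,0,2))$ and $((0,1,1),(2,1,2))$ to survive, up to permutation, both with $c_1-D=2h_a+h_c$ for distinct $a,c$.

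To conclude, in these two cases $h^1(F,\cO_F(D))=0$, so $H^0(F,\cE)\to H^0(F,\cO_F(c_1-D))$ is surjective and a general section $s$ has nonzero image $\bar s$, whence $(s)_0\subseteq\Delta:=(\bar s)_0$. The system $|c_1-D|=|2h_a+h_c|$ is pulled back, along the projection of $F$ onto its $a$-th and $c$-th factors, from the very ample $|\cO_{\p1\times\p1}(2,1)|$, whose general member is a smooth irreducible curve; so $\Delta$ is a smooth irreducible surface for general $s$, and since $\bar s$ vanishes to first order along $\Delta$ the divisorial part $D_s$ of $(s)_0$ is $0$ or $\Delta$. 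As $(s)_0=D_s$ is non-empty by Lemma \ref{lNonEmpty6-1}, $D_s=\Delta$ has class $2h_a+h_c$, which is neither $(0,0,1)$ nor $(0,1,1)$, contradicting the hypothesis that the general section has divisorial part of class $D$. Hence $E\ne\emptyset$.

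The main obstacle is the last step of (3): the numerical reduction is routine, but excluding a divisorial part strictly between $0$ and $\Delta$ genuinely needs $\Delta$ irreducible, which rests on the product structure of $F$ and the very ampleness of $\cO_{\p1\times\p1}(2,1)$; one also needs $D_s$ to appear with multiplicity one along $\Delta$ (true since $\bar s$ vanishes to first order there), so that $D_s\le\Delta$ forces $D_s\in\{0,\Delta\}$.
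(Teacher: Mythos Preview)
Your proof is correct; parts (1) and (2) match the paper's argument essentially verbatim. For (3), however, you take a substantially longer route than the paper. The paper's argument is a one-liner: assuming $E=\emptyset$, Sequence \eqref{seqIdealO} exhibits $\cE$ as an extension of $\cO_F(c_1-D)$ by $\cO_F(D)$; since $c_1-D\ge0$ (Proposition \ref{pLower6-1}) and $D\ge0$, both line bundles are \emph{globally generated} on $(\p1)^3$ (effective line bundles there are globally generated), and $h^1\big(F,\cO_F(D)\big)=0$, so $\cE$ itself is globally generated --- contradicting (2) directly.

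Your approach instead pushes the nontriviality of the extension through K\"unneth to pin down $(D,c_1)$, then uses surjectivity of $H^0(F,\cE)\to H^0(F,\cO_F(c_1-D))$ and irreducibility of a general $\Delta\in|2h_a+h_c|$ to trap the divisorial part of a general section. All of this is valid, and your handling of the multiplicity issue ($D_s\le\Delta$ because $\bar s$ has order one along $\Delta$) is careful. But the whole ``main obstacle'' you identify is self-inflicted: once you observe that effective $\Rightarrow$ globally generated on $F$, the extension argument of the paper finishes immediately without case analysis, without invoking irreducibility of $\Delta$, and without comparing divisor classes. The advantage of your route is perhaps that it makes explicit which $(D,c_1)$ would be the dangerous cases; the paper's route buys brevity and avoids any geometric input beyond the structure of line bundles on a product of $\p1$'s.
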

\begin{proof}
We already know that 
$$
(\delta_1,\delta_2,\delta_3)\in\left\{\ (0,0,0), (0,0,1), (0,1,1), (0,0,2), (0,1,2), (0,2,2)\ \right\}
$$
(see Relation \eqref{delta}). Since $D\ne0$, it follows that $(\delta_1,\delta_2,\delta_3)\ne(0,0,0)$. Let $D\in \left\vert2h_2+2h_3\right\vert$ and look at the cohomologies of Sequences \eqref{seqIdeal} and \eqref{seqStandard} respectively twisted by $\cO_F(-2h)$ and $\cO_F(c_1-D-2h)$. On the one hand, taking into account that $h^0\big(F,\cE(-2h)\big)=h^1\big(F,\cE(-2h)\big)=0$ by hypothesis we obtain 
$$
1=h^1\big(F,\cO_F(D-2h)\big)=h^0\big(F,\cI_{E\vert F}(c_1-D-2h)\big)\le h^0\big(F,\cO_F(c_1-D-2h)\big).
$$
On the other hand we also know that the last dimension is zero, due to the restrictions $D\ge0$, $2h-c_1\ge0$ and $c_1\ne2h$. It follows that equality $(\delta_1,\delta_2,\delta_3)=(0,2,2)$ cannot occur.

  If $\cE$ would be globally generated, then the general section $s\in H^0\big(F,\cE\big)$ should vanish on a curve, contradicting the hypothesis. 
  
  Finally, assume that $E=\emptyset$. Thus $\cI_{E\vert F}(c_1-D)\cong\cO_F(c_1-D)$ in Sequence  \eqref{seqIdeal}. We obviously have that $\cO_F(D)$ is globally generated and $h^1\big(F,\cO_F(D)\big)=0$. Moreover $\cO_F(c_1-D)$ is globally generated too by Proposition \ref{pLower6-1}. We conclude that $\cE$ is globally generated too thanks to Sequence  \eqref{seqIdeal} contradicting what we just proved above.
\end{proof}

From now on in this section we will assume that $\cE$ satisfies the hypothesis of the above lemma, hence we can assume $E\ne\emptyset$. We want to classify all the possible cases. To this purpose we recall that $E$ is in the class $c_2(\cE(-D))=c_2-c_1D+D^2\in A^2(F)$. Once the values of $(\alpha_1,\alpha_2,\alpha_3)$ and $(\delta_1,\delta_2,\delta_3)$ are fixed, the class $c_2$ can be computed thanks to System \eqref{AlphaBeta} and Formulas \eqref{c_1c_2-6-1}, \eqref{hc_2-6-1}. Moreover we have
\begin{gather*}
  c_1D=(\delta_2\alpha_3+\delta_3\alpha_2)h_2h_3+(\delta_1\alpha_3+\delta_3\alpha_1)h_1h_3+(\delta_1\alpha_2+\delta_2\alpha_1)h_1h_2,\\
  D^2=2(\delta_2\delta_3h_2h_3+\delta_1\delta_3h_1h_3+\delta_1\delta_2h_1h_2).
\end{gather*}
If $D\ne0$, then Inequalities \eqref{positivity} give further restrictions on the $\beta_i$'s. 

For example let $(\alpha_1,\alpha_2,\alpha_3)=(1,2,2)$ and $(\delta_1,\delta_2,\delta_3)=(0,1,2)$. In this case $e(c_1,D)=0$, hence System \eqref{AlphaBeta} is 
$$
\left\lbrace\begin{array}{l} 
\beta_1+2\beta_2+2\beta_3=8\\
\beta_1+\beta_2+\beta_3=5.
\end{array}\right.
$$
Inequalities \eqref{positivity} imply $\beta_i\ge0$, $i=1,2,3$, thus the only possible solutions $(\beta_1,\beta_2,\beta_3)$ of the above system are:
$$
(2,1,2),\quad(2,2,1),\quad(2,0,3),\quad(2,3,0).
$$
Using the expressions above for $c_1D$ and $D^2$, an immediate computation shows that the corresponding values of $c_2(\cE(-D))$ are $-h_1h_3+h_1h_2$, $0$, $-2h_1h_3+2h_1h_2$, $h_1h_3-h_1h_2$ respectively. Taking into account that $c_2(\cE(-D))$ is the class in $A^2(F)$ of  a non--empty scheme of pure codimension $2$, namely $E$, it follows that none of the above cases are admissible.

By examining along the same lines the all the possible cases, we obtain $53$ cases up to permutations of the $h_i$'s. We list all these cases in the following table. 

$$
\begin{array}{|c|c|c|c|c|}           \hline
  { (\alpha_1,\alpha_2,\alpha_3) } & {(\delta_1,\delta_2,\delta_3) } & e(c_1,D) & {(\beta_1,\beta_2,\beta_3) } & \text{class of $E$ in $A^2(F)$}\\ \hline
(0,0,1) & (0,0,1) & 1 &(0,0,0) & 0 \\  \hline
(0,0,2) & (0,0,1) & 0 & (0,0,0) & 0 \\  \hline
(0,1,1) & (0,0,1) & 0 & (1,0,0) & 0 \\  \hline
(0,1,1) & (0,1,0) & 0 & (1,0,0) & 0 \\  \hline
(0,1,1) & (0,1,1) & 1 & (0,0,0) & 0 \\  \hline
(0,1,2) & (0,0,1) & 0 & (1,0,0) & 0 \\  \hline
(0,1,2) & (0,1,0) & 0 & (1,0,0) & -h_2h_3 \\  \hline
(0,1,2) & (0,1,1) & 0 & (1,0,0) & 0 \\  \hline
(0,1,2) & (0,1,2) & 1 & (0,0,0) & 0 \\  \hline
(0,2,2) & (0,0,1) & 0 & (2,0,0) & 0 \\  \hline
(0,2,2) & (0,0,2) & 0 & (2,0,0) & -2h_2h_3 \\  \hline
(0,2,2) & (0,1,1) & 0 & (2,0,0) & 0 \\  \hline
(0,2,2) & (0,1,2) & 0 & (2,0,0) & 0 \\  \hline
(1,1,1) & (0,0,1) & 0 & (0,1,1) & -h_2h_3+h_1h_2 \\  \hline
(1,1,1) & (0,0,1) & 0 & (1,1,0) & 0 \\  \hline
(1,1,1) & (0,1,1) & 0 & (0,1,1) & 0 \\  \hline
(1,1,1) & (0,1,1) & 0 & (1,0,1) & h_2h_3-h_1h_3 \\  \hline
(1,1,2) & (0,0,1) & 0 & (1,1,1) & h_1h_3 \\  \hline
(1,1,2) & (0,0,1) & 0 & (0,2,1) & -h_2h_3+h_1h_3 \\  \hline
(1,1,2) & (0,1,0) & 0 & (0,1,1) & -h_2h_3+h_1h_3 \\  \hline
(1,1,2) & (0,1,0) & 0 & (0,2,1) & 0 \\  \hline
(1,1,2) & (0,1,1) & 0 & (1,1,1) & 0 \\  \hline
(1,1,2) & (0,1,1) & 0 & (0,2,1) & -h_2h_3+h_1h_3 \\  \hline
(1,1,2) & (1,0,1) & 0 & (0,2,1) & -h_2h_3+h_1h_3 \\  \hline
(1,1,2) & (1,1,0) & 0 & (0,2,1) & -2h_2h_3+h_1h_2 \\  \hline
(1,1,2) & (0,1,2) & 0 & (1,1,1) & h_2h_3-h_1h_3 \\  \hline
(1,1,2) & (0,1,2) & 0 & (0,2,1) & 0 \\  \hline
\end{array}
$$
$$
\begin{array}{|c|c|c|c|c|}           \hline
  { (\alpha_1,\alpha_2,\alpha_3) } & {(\delta_1,\delta_2,\delta_3) } & e(c_1,D) & {(\beta_1,\beta_2,\beta_3) } & \text{class of $E$ in $A^2(F)$}\\ \hline
(1,2,2) & (0,0,1) & 0 & (2,1,2) & 2h_1h_3 \\  \hline
(1,2,2) & (0,0,1) & 0 & (2,2,1) & h_1h_3+h_1h_2  \\  \hline
(1,2,2) & (0,0,1) & 0 & (2,0,3) & -h_1h_3+3h_1h_2 \\  \hline
(1,2,2) & (0,0,1) & 0 & (2,3,0) & 2h_1h_3 \\  \hline
(1,2,2) & (1,0,0) & 0 & (2,1,2) & 2h_2h_3-h_1h_3 \\  \hline
(1,2,2) & (1,0,0) & 0 & (2,2,1) & 2h_2h_3-h_1h_2 \\  \hline
(1,2,2) & (1,0,0) & 0 & (2,0,3) & 2h_2h_3-2h_1h_3+h_1h_2 \\  \hline
(1,2,2) & (0,0,2) & 0 & (2,1,2) & -2h_2h_3-h_1h_3+2h_1h_2 \\  \hline
(1,2,2) & (0,0,2) & 0 & (2,2,1) & -2h_2h_3+h_1h_2 \\  \hline
(1,2,2) & (0,0,2) & 0 & (2,0,3) & -2h_2h_3-2h_1h_3+3h_1h_2 \\  \hline
(1,2,2) & (0,0,2) & 0 & (2,3,0) & -2h_2h_3+h_1h_3 \\  \hline
(1,2,2) & (0,1,1) & 0 & (2,2,1) & h_1h_2 \\  \hline
(1,2,2) & (0,1,1) & 0 & (2,3,0) & 2h_1h_3-h_1h_2 \\  \hline
(1,2,2) & (1,0,1) & 0 & (2,1,2) & 0 \\  \hline
(1,2,2) & (1,0,1) & 0 & (2,2,1) & h_1h_3-h_1h_2 \\  \hline
(1,2,2) & (1,0,1) & 0 & (2,3,0) & 2h_1h_3-2h_1h_2 \\  \hline
(1,2,2) & (1,0,1) & 0 & (2,0,3) & -h_1h_3+h_1h_2 \\  \hline
(1,2,2) & (0,1,2) & 0 & (2,1,2) & -h_1h_3+h_1h_2 \\  \hline
(1,2,2) & (0,1,2) & 0 & (2,2,1) & 0 \\  \hline
(1,2,2) & (0,1,2) & 0 & (2,3,0) & h_1h_3-h_1h_2 \\  \hline
(1,2,2) & (0,1,2) & 0 & (2,0,3) & -2h_1h_3+2h_1h_2 \\  \hline
(1,2,2) & (1,0,2) & 0 & (2,1,2) & -2h_2h_3+h_1h_3 \\  \hline
(1,2,2) & (1,0,2) & 0 & (2,2,1) & -2h_2h_3+2h_1h_3-h_1h_2 \\  \hline
(1,2,2) & (1,0,2) & 0 & (2,0,3) & -2h_2h_3+3h_1h_3-2h_1h_2 \\  \hline
(1,2,2) & (1,0,2) & 0 & (2,0,3) & -2h_2h_3+h_1h_2 \\  \hline
\end{array}
$$
\vskip0.5truecm

Only few cases are actually admissible in the above table and we will examine them in the following.

When the class of $E$ is $h_1h_2$, then $E$ is a line and we have an exact sequence of the form
$$
0\longrightarrow\cO_F(-h_1-h_2)\longrightarrow \cO_F(-h_1)\oplus \cO_F(-h_2)\longrightarrow\cI_{E\vert F}\longrightarrow0.
$$
It follows that $\cI_{E\vert F}(h)\cong\cI_{E\vert F}(c_1-D)$ is globally generated. Thus we conclude again that $\cE$ should be globally generated, contradicting Lemma \ref{lggDivisor}. A similar argument holds when the class of $E$ is $h_1h_3$.

When the class of $E$ is $h_1h_3+h_1h_2$, then we cannot repeat the above argument. In this case we have that $h^2\big(F,\cO_F(D-h)\big)=h^3\big(F,\cO_F(D-2h)\big)=0$. Moreover $\cE$ is aCM, hence we obtain 
$$
h^1\big(F,\cI_{E\vert F}(c_1-D-h)\big)=h^2\big(F,\cI_{E\vert F}(c_1-D-2h)\big)=0.
$$
The cohomology of Sequence \eqref{seqStandard} twisted by $\cO_F(c_1-D-3h)$ implies that
$$
h^3\big(F,\cI_{E\vert F}(c_1-D-3h)\big)\le h^3\big(F,\cO_{F}(c_1-D-3h)\big).
$$
Since $c_1-D-3h=-2h_1-h_2-2h_3$ we conclude that the last dimension is zero. It follows that $\cI_{E\vert F}(c_1-D)$ is regular in the sense of Castelnuovo--Mumford, hence it is globally generated. As in the previous cases we also obtain that $\cE$ is globally generated, contradicting again Lemma \ref{lggDivisor}. A similar argument also proves that the class of $E$ cannot be $2h_1h_2$, $2h_1h_3$.

We summarize the above discussion in the following statement.

\begin{proposition}
\label{pCodimension}
  If $\cE$ is an indecomposable initialized aCM bundle of rank $2$ on $F$, then the zero--locus $(s)_0$ of a general section $s\in H^0\big(F,\cE\big)$ has pure dimension $2$. 
\end{proposition}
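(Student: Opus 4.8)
The plan is to combine the Chern-class bounds already established with a finite case analysis, much of which is contained in the discussion preceding the statement. From Propositions \ref{pLower6-1} and \ref{pUpper6-1} one has, after ordering $\alpha_1\le\alpha_2\le\alpha_3$, that $c_1\ge0$ and either $2h-c_1\ge0$ or $c_1=h_1+2h_2+3h_3$ up to permutations, and the general section already vanishes exactly along a curve when $c_1$ is $0$, $2h$, or $h_1+2h_2+3h_3$ up to permutations; so only the ``intermediate'' values of $c_1$, which satisfy $\alpha_1\le1\le\alpha_3$, remain. I would then argue by contradiction: if the general section $s$ of such an $\cE$ had $(s)_0=E\cup D$ with $D\ne0$, then Lemma \ref{lggDivisor} would restrict $(\delta_1,\delta_2,\delta_3)$ to one of $(0,0,1),(0,1,1),(0,0,2),(0,1,2)$ up to permutation, guarantee $E\ne\emptyset$, and record that $\cE$ is not globally generated --- a conclusion I shall contradict in every remaining case.

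Next, for each admissible pair $\big((\alpha_1,\alpha_2,\alpha_3),(\delta_1,\delta_2,\delta_3)\big)$ I would read off $e(c_1,D)$ (it equals $1$ iff $D=c_1$), compute $hc_2$ and $c_1c_2$ from Formulas \eqref{hc_2-6-1} and \eqref{c_1c_2-6-1}, and solve System \eqref{AlphaBeta} for $(\beta_1,\beta_2,\beta_3)$ subject to the positivity Inequalities \eqref{positivity}; this leaves only a short list of candidate triples. For each I would compute the class of $E$ in $A^2(F)$, namely $c_2(\cE(-D))=c_2-c_1D+D^2$, using the explicit expressions for $c_1D$ and $D^2$. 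Since $E$ is a nonempty subscheme of pure codimension $2$, its class must be an effective combination of $h_2h_3,h_1h_3,h_1h_2$; discarding every candidate that violates this removes almost all the a priori possibilities and leaves exactly the table displayed above.

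Finally I would eliminate each surviving entry by exhibiting the forbidden global generation of $\cE$. When the class of $E$ is $h_1h_2$ or $h_1h_3$, $E$ is a line, and its standard Koszul-type resolution makes $\cI_{E\vert F}(c_1-D)$ globally generated; together with the global generation of $\cO_F(D)$ and the aCM hypothesis, Sequence \eqref{seqIdeal} then forces $\cE$ to be globally generated, contradicting Lemma \ref{lggDivisor}. When the class is $h_1h_2+h_1h_3$, $2h_1h_2$, or $2h_1h_3$, the resolution argument fails, so instead I would check that $\cI_{E\vert F}(c_1-D)$ is regular in the sense of Castelnuovo--Mumford: the vanishing of the relevant $H^1$ and $H^2$ follows since $\cE$ and $\cO_F(D)$ are aCM, while the remaining $H^3$ vanishing is obtained by comparing $\cI_{E\vert F}(c_1-D-3h)$ with $\cO_F(c_1-D-3h)$ through Sequence \eqref{seqStandard}, the latter having vanishing third cohomology for the values of $c_1-D$ that occur; regularity again gives global generation of $\cI_{E\vert F}(c_1-D)$, hence of $\cE$, the same contradiction. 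As no intermediate case survives, $D=0$ and the general section vanishes exactly along a curve, as claimed.

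The difficulty here is not conceptual but organizational: the genuine content is that the positivity and effectivity constraints annihilate the overwhelming majority of the a priori cases, leaving only the handful recorded in the table, and that these split cleanly into ``line'' classes (dispatched by an explicit resolution) and the three residual classes $h_1h_2+h_1h_3$, $2h_1h_2$, $2h_1h_3$ (dispatched by a regularity computation). I expect the main obstacle to be the bookkeeping --- ensuring that no admissible triple $(\beta_1,\beta_2,\beta_3)$ is overlooked and that the effectivity test is applied correctly across all the cases in the table --- together with the one genuinely delicate point, namely the top-cohomology vanishing required for the Castelnuovo--Mumford argument in the three residual classes.
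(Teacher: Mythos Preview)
Your proposal is correct and follows essentially the same route as the paper: reduce to the intermediate range via Propositions \ref{pLower6-1} and \ref{pUpper6-1}, invoke Lemma \ref{lggDivisor} to constrain $D$ and record that $\cE$ is not globally generated, tabulate the admissible triples $(\alpha,\delta,\beta)$ and the resulting class $c_2(\cE(-D))$, discard the non--effective or zero classes, and finish the residual cases by showing $\cI_{E\vert F}(c_1-D)$ is globally generated (Koszul for the line classes, Castelnuovo--Mumford regularity for $h_1h_2+h_1h_3$, $2h_1h_2$, $2h_1h_3$), contradicting Lemma \ref{lggDivisor}. One minor imprecision: in the regularity step you justify the $H^1$ and $H^2$ vanishings by saying ``$\cO_F(D)$ is aCM'', but what is actually needed is $h^2\big(F,\cO_F(D-h)\big)=h^3\big(F,\cO_F(D-2h)\big)=0$, the second of which is not an aCM condition; it holds here because in all three residual cases one has $D=h_3$, for which both vanishings are immediate by K\"unneth, and this is exactly how the paper argues.
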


By combining Propositions \ref{pLower6-1}, \ref{pUpper6-1} and \ref{pCodimension} we have the proof of Theorem A.

The next sections are devoted to the classification of all indecomposable initialized aCM bundles of rank $2$ on $F$, or, in other words, to the proof of Theorem B stated in the introduction. 

\section{The standard bound}
\label{sExtremal}
In this section we deal with the extremal cases $c_1=0$ and $c_1=2h$. 
 In the first case Proposition \ref{pLower6-1} implies that the zero--locus  $E:=(s)_0$ of a general section $s\in H^0\big(F,\cE\big)$ has codimension $2$ inside $F$. Formula \eqref{hc_2-6-1} and Lemma \ref{lInvariantE} (notice that we know that $D=0$) give $\deg(E)=h c_2=1$, thus $E$ is a line. In particular, if its  class in $A^2(F)$ is $\beta_1h_2h_3+\beta_2h_1h_3+\beta_3h_1h_2$, then necessarily all the $\beta_i$'s are zero but one which is $1$, i.e. the class of $E$ in $A^2(F)$ is either $h_2h_3$ or $h_1h_3$ or $h_1h_2$.

Conversely we show that each line $E\subseteq F$ arises as the zero locus of a section of an indecomposable initialized aCM bundle of rank $2$ on $F$ with $c_1=0$. To this purpose, assume that the class of $E$ is $h_2h_3$. We know that $\omega_E\cong\cO_E(-2h)$, thus adjunction formula on $F$ implies $\det(\cN_{E\vert F})\cong\cO_E$. Theorem \ref{tSerre} with $\mathcal L:=\cO_F$ guarantees the existence of a vector bundle $\cE$ of rank $2$ fitting into a sequence of the form
$$
0\longrightarrow \cO_F\longrightarrow \cE\longrightarrow \cI_{E\vert F}\longrightarrow 0.
$$
Hence $h^1\big(F,\cE(th)\big)\le h^1\big(F, \cI_{E\vert F}(th)\big)$, $t\in \bZ$. The vanishing $h^1\big(\p7,\cI_{E\vert\p7}(th)\big)=h^2\big(\p7,\cI_{F\vert\p7}(th)\big)=0$ (recall that both $E$ and $F$ are aG), imply $h^1\big(F, \cI_{E\vert F}(th)\big)=0$, i.e. $h^1\big(F,\cE(th)\big)=0$. Since $c_1=0$, it follows that ${\cE}^\vee\cong\cE$, thus Serre's duality also yields $h^2\big(F,\cE(th)\big)=0$. We conclude that $\cE$ is an aCM bundle. It is trivial to check that $\cE$ is initialized. Finally, if $\cE$ were decomposable, then $\cE\cong\cM\oplus \cM^{-1}$ because $c_1=0$. Thus $h_2h_3=c_2=-c_1(\cM)^2$. But if $c_1(\cM)=\mu_1h_1+\mu_2h_2+\mu_3h_3$, then $c_1(\cM)^2=2(\mu_2\mu_3h_2h_3+\mu_1\mu_3h_1h_3+\mu_1\mu_2h_1h_2)$ which cannot coincide with $h_2h_3$.

Therefore we have proven the following theorem.

\begin{theorem}
  \label{tLine}
  There exist indecomposable initialized aCM bundles $\cE$ of rank $2$ with $c_1=0$. 

  Moreover the zero--locus of a general section of $\cE$ is a line and each line on $F$ can be obtained in this way.
\end{theorem}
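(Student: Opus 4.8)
The plan is to prove the two assertions in turn: first to read off what $E:=(s)_0$ must look like for such a bundle, and then to construct one starting from an arbitrary line on $F$.

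For the descriptive part, suppose $\cE$ is indecomposable, initialized and aCM with $c_1=0$. By Proposition \ref{pLower6-1} the zero--locus of a general section has codimension $2$, so its divisorial part $D$ is $0$. Lemma \ref{lInvariantE} then gives $\deg(E)=hc_2$, and substituting $\alpha_1=\alpha_2=\alpha_3=0$ together with $e(c_1,D)=1$ (since $D=0=c_1$) into Formula \eqref{hc_2-6-1} yields $hc_2=1$. A subscheme of $\p{7}$ which is pure of dimension $1$ and of degree $1$ is a line, so $E$ is a line; writing its class in $A^2(F)$ as $\beta_1h_2h_3+\beta_2h_1h_3+\beta_3h_1h_2=c_2$ forces exactly one $\beta_i$ to equal $1$ and the other two to vanish, hence the class of $E$ is one of $h_2h_3$, $h_1h_3$, $h_1h_2$.

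For the converse I would take an arbitrary line $E\subseteq F$ and, using the action of the symmetric group of order $3$ on $F$, reduce to the case where the class of $E$ is $h_2h_3$. Since $E$ is a line we have $\cO_E(h)\cong\cO_{\p1}(1)$ and $\omega_E\cong\cO_{\p1}(-2)\cong\cO_E(-2h)$; comparing this with the adjunction formula $\omega_E\cong\omega_F\otimes\det(\cN_{E\vert F})$ and $\omega_F\cong\cO_F(-2h)$ gives $\det(\cN_{E\vert F})\cong\cO_E$. As $h^1(F,\cO_F)=h^2(F,\cO_F)=0$ by K\"unneth, Theorem \ref{tSerre} applied with $\mathcal L:=\cO_F$ produces a rank $2$ bundle $\cE$, unique up to isomorphism, with $\det(\cE)\cong\cO_F$ and fitting in
$$0\longrightarrow\cO_F\longrightarrow\cE\longrightarrow\cI_{E\vert F}\longrightarrow0.$$
It then remains to check the three required properties. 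For the aCM property, the displayed sequence gives $h^1(F,\cE(th))\le h^1(F,\cI_{E\vert F}(th))$, and from $0\to\cI_{F\vert\p{7}}(th)\to\cI_{E\vert\p{7}}(th)\to\cI_{E\vert F}(th)\to0$ the term $h^1(F,\cI_{E\vert F}(th))$ is squeezed between $h^1(\p{7},\cI_{E\vert\p{7}}(th))=0$ and $h^2(\p{7},\cI_{F\vert\p{7}}(th))=0$, both vanishing because a line and $F$ are projectively normal (indeed arithmetically Gorenstein) in $\p{7}$; hence $h^1(F,\cE(th))=0$. Because $c_1=0$ we have $\cE^\vee\cong\cE$, so Serre duality converts this into $h^2(F,\cE(th))=0$ as well, and $\cE$ is aCM. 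That $\cE$ is initialized is immediate: $h^0(F,\cE)\ne0$ by construction, while twisting the sequence by $\cO_F(-h)$ and using $h^0(F,\cO_F(-h))=0$ gives $h^0(F,\cE(-h))=0$. Finally, if $\cE$ were decomposable it would be $\cM\oplus\cM^{-1}$ for some $\cM\in\Pic(F)$, whence $c_2=-c_1(\cM)^2$; but for $c_1(\cM)=\mu_1h_1+\mu_2h_2+\mu_3h_3$ the square $c_1(\cM)^2=2(\mu_2\mu_3h_2h_3+\mu_1\mu_3h_1h_3+\mu_1\mu_2h_1h_2)$ has only even coefficients and so cannot equal $-h_2h_3$, a contradiction; hence $\cE$ is indecomposable.

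None of these steps is deep, but the point needing the most care is the cohomology chase proving that $\cE$ is aCM: what is really being used is that the ideal sheaf of a \emph{line inside $F$} (not merely inside $\p{7}$) has vanishing intermediate cohomology, and this is precisely what the arithmetic--Gorenstein hypotheses on both $E$ and $F$ are there to supply. Everything else is bookkeeping with the Koszul sequence and K\"unneth's formula.
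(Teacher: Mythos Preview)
Your argument is correct and follows essentially the same route as the paper: Proposition~\ref{pLower6-1} plus Formula~\eqref{hc_2-6-1} to identify $E$ as a line, then the Hartshorne--Serre construction with $\mathcal L=\cO_F$, the aCM check via $h^1(\p7,\cI_{E\vert\p7}(th))=h^2(\p7,\cI_{F\vert\p7}(th))=0$ together with $\cE\cong\cE^\vee$ and Serre duality, and finally the parity obstruction $c_1(\cM)^2\equiv0\pmod 2$ for indecomposability. The only cosmetic point is that, in the initialized check, the twist by $\cO_F(-h)$ also uses $h^0(F,\cI_{E\vert F}(-h))\le h^0(F,\cO_F(-h))=0$, not just the vanishing of the left-hand term; this is implicit but worth stating.
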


Now we turn our attention to the case $c_1=2h$. Proposition \ref{pUpper6-1} guarantees that $\cE$ is Ulrich, hence globally generated. Thus also in this case the zero--locus  $E:=(s)_0$ of a general section $s\in H^0\big(F,\cE\big)$ is a smooth curve inside $F$. 

Using Sequence \eqref{seqIdeal} we obtain that $h^1\big(F,\cI_{E\vert F}\big)=h^1\big(F,\cI_{E\vert F}(h)\big)=0$, hence both $h^0\big(E,\cO_{E}\big)=1$ and $E$ is linearly normal (use Sequence \eqref{seqStandard}). We conclude that we can assume $E$ to be a smooth curve which is not contained in any hyperplane (from now on we will briefly refer to such a property by saying that $E$ is non--degenerate). Moreover, combining Lemma \ref{lInvariantE} and Formula \eqref{hc_2-6-1}, we obtain
$\deg(E)=hc_2=8$ and that $E$ is elliptic.

Conversely, let $E$ be a smooth, non--degenerate elliptic curve of degree $8$ on $F$. We know by adjunction that $\cO_E\cong\omega_E\cong\det(\cN_{E\vert F})\otimes\cO_F(-2h)$, hence $\det(\cN_{E\vert F})\cong\cO_F(2h)\otimes\cO_E$. The invertible sheaf $\cO_F(2h)$ satisfies the vanishing of the Theorem \ref{tSerre}, thus $E$ is the zero locus of a section $s$ of a vector bundle $\cE$ of rank $2$ on $F$ with $c_1=2h$ and the class of $E$ in $A^2(F)$ is $c_2$.

We conclude that in order to show the existence of indecomposable, initialized, aCM bundles $\cE$ of rank $2$ with $c_1=2h$, it suffices to prove the existence of smooth, non--degenerate elliptic curve of degree $8$ on $F$. The remaining part of this section is devoted to the proof of the existence of such curves.

Due to Proposition 1.1 and Corollary 2.2 of \cite{C--G--N}, we know that $E$ is aCM, then $h^1\big(\p7,\cI_{E\vert\p7}(th)\big)=0$. Since $F$ is aG, hence $h^2\big(\p7,\cI_{F\vert\p7}(th)\big)=0$, taking the cohomology of sequence
\begin{equation}
\label{seqIdealEF}
0\longrightarrow \cI_{F\vert\p7}\longrightarrow \cI_{E\vert\p7}\longrightarrow \cI_{E\vert F}\longrightarrow 0,
\end{equation}
it also follows that $h^1\big(F,\cI_{E\vert F}(th)\big)=0$. The cohomology of the Sequence \eqref{seqIdeal} yields $h^1\big(F,\cE(th)\big)=0$. Such a vanishing also implies  $h^2\big(F,\cE(th)\big)=0$ by Serre's duality. We conclude that $\cE$ is aCM. Thanks to Proposition \ref{pUpper6-1} we also know that $\cE$ is Ulrich. Thus non--degenerate elliptic curves on a del Pezzo threefold $F$ correspond to Ulrich bundles on $F$ with $c_1=2h$. 

First we deal with the possible values of $c_2$. 

\begin{lemma}
\label{lUlrichc_2}
If $\cE$ is an indecomposable initialized aCM bundle of rank $2$ with $c_1=2h$, then $c_2$ is either $2h_2h_3+2h_1h_3+4h_1h_2$ or $2h_2h_3+3h_1h_3+3h_1h_2$ up to permutation of the $h_i$'s.
\end{lemma}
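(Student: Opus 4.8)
The plan is to turn the geometry of the curve $E$ cut out by a general section of $\cE$ into numerical constraints on the coefficients $\beta_i$ of $c_2$ and then to note that only two triples survive.

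First I would recall from Proposition \ref{pUpper6-1} that $\cE$ is Ulrich, hence globally generated, so the zero--locus $E:=(s)_0$ of a general section $s\in H^0\big(F,\cE\big)$ is a smooth curve with divisorial part $D=0$; by the discussion preceding the lemma $E$ is moreover a non--degenerate, linearly normal, smooth elliptic curve of degree $8$, and its class in $A^2(F)$ is $c_2=\beta_1h_2h_3+\beta_2h_1h_3+\beta_3h_1h_2$. Next I would compute the relevant invariants: since $\alpha_1=\alpha_2=\alpha_3=2$, Formula \eqref{c_1c_2-6-1} gives $c_1c_2=16$, and either \eqref{hc_2-6-1} (with $e(c_1,D)=0$, as $D=0\ne c_1$) or Lemma \ref{lInvariantE} gives $hc_2=\deg(E)=8$. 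Feeding both into System \eqref{AlphaBeta} leaves the single relation $\beta_1+\beta_2+\beta_3=8$, while \eqref{positivity} gives $\beta_i\ge0$.

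The geometric input comes from the three projections $\pi_i\colon F\to\p1$. Intersecting $c_2$ with a general fibre $h_i$ shows $\beta_i=h_i\cdot c_2=\deg(\pi_i|_E)$, the degree of $\pi_i$ restricted to $E$. If $\beta_i=0$ then $E$ lies in a fibre of $\pi_i$, which is a copy of $\p1\times\p1$ embedded by $\cO_F(h)$ as a quadric surface spanning a linear $\p3\subsetneq\p7$, contradicting the non--degeneracy of $E$. If $\beta_i=1$ then $\pi_i|_E$ is a degree one morphism between smooth projective curves, hence an isomorphism $E\cong\p1$, contradicting $p_a(E)=1$. Therefore $\beta_i\ge2$ for $i=1,2,3$, and together with $\beta_1+\beta_2+\beta_3=8$ this leaves, up to permutation of the indices, only $(\beta_1,\beta_2,\beta_3)=(2,2,4)$ or $(2,3,3)$, i.e. $c_2=2h_2h_3+2h_1h_3+4h_1h_2$ or $c_2=2h_2h_3+3h_1h_3+3h_1h_2$ up to permutation of the $h_i$'s.

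I do not expect a serious obstacle: the only points needing care are the identification $\beta_i=\deg(\pi_i|_E)$ obtained by intersecting with a general fibre, and the degeneracy argument, both of which are routine once $E$ is known to be a non--degenerate elliptic curve of the prescribed degree.
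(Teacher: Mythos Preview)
Your proof is correct and follows essentially the same route as the paper: both reduce to showing $\beta_i\ge2$ for all $i$ and then use $\beta_1+\beta_2+\beta_3=8$. The paper phrases the key step as ``non--degeneracy forces the restriction of $\vert h_i\vert$ to $E$ to be a pencil, and Riemann--Roch on an elliptic curve then gives $\beta_i\ge2$'', while you split into the cases $\beta_i=0$ (degenerate) and $\beta_i=1$ ($E\cong\p1$); these are the same argument in different words.
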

\begin{proof}
The linear system $\vert h_i\vert$ on $F$ has dimension $1$. Let $D_i\in \vert h_i\vert$ be general. The cohomology of the exact sequence 
$$
0\longrightarrow \cO_F(h-h_i)\longrightarrow\cO_F(h)\longrightarrow\cO_{D_i}(h)\longrightarrow0
$$
yields $h^0\big(D_i,\cO_{D_i}(h)\big)=4$, thus $D_i$ spans a space of dimension $3$ in $\p7$. 

Let $E$ be the zero locus of a general section of $\cE$. Since $E$ is non--degenerate, we know that the restriction to $E$ of $\vert h_i\vert$ has dimension at least $1$. Since $E$ is elliptic, it follows that its degree, which is $\beta_i$, is greater than $2$, thanks to Riemann--Roch theorem on the curve $E$. Now the statement follows from the fact that $\deg(E)=hc_2=\beta_1+\beta_2+\beta_3=8$.
\end{proof}

We now proceed to construct explicitly such curves and bundles. To this purpose we will make use of degenerate elliptic curves. Since the proofs of the results below depend only on the fact that $F$ is a del Pezzo threefold, we will state and prove such results in general. Thus, in what follows, $F$ will be a del Pezzo threefold of degree $3\le d\le7$, embedded in $\p{d+1}$. Let $H$ be a general hyperplane in $\p{d+1}$, and define $S:=F\cap H$. 
The surface $S$ is smooth and connected of degree $d$. Moreover, adjunction on $F$ implies that $S$ is a del Pezzo surface. In particular we know that $S$ can be represented as the blow up of $\p2$ in $9-d$ points in general position. 

Thus the Picard group of $S$ is freely generated by the class $\ell$ of the pull--back of a general line in $\p2$ and by the classes of the $9-d$ exceptional divisors $e_1,\dots,e_{9-d}$ of the blow up.

Clearly we have a natural map $A^1(S)\to A^2(F)$ and we want to inspect which elliptic curves on $F$ come from curves on $S$. We will make such an analysis later on in the section. We now make some comments on the deformation theory of degenerate elliptic curves on a del Pezzo threefolds. 

\begin{proposition}
  \label{pNormal}
  Let $F$ be a del Pezzo threefold of degree $3\le d\le 7$. If $H\subseteq\p{d+1}$ is a general hyperplane and $C\subseteq S:=F\cap H$ is a smooth, connected elliptic curve of degree $\delta$, then 
  $$
  h^0\big(S,\cO_{S}(C)\big)=\delta+1,\qquad h^0\big(C,\cN_{C\vert F}\big)=2\delta,\qquad h^1\big(S,\cO_{S}(C)\big)=h^1\big(C,\cN_{C\vert F}\big)=0.
  $$
\end{proposition}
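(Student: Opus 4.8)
The plan is to read off all three pieces of data from the normal bundle sequence of $C$ inside $F$ together with the standard exact sequences relating $S$ and $F$. First I would set up the geometry: since $S$ is a del Pezzo surface, adjunction on $S$ gives $\omega_S\cong\cO_S(-H)$ where $H=h\vert_S$, and for a smooth connected elliptic curve $C\subseteq S$ adjunction on the surface yields $\cO_C\cong\omega_C\cong\cO_S(C)\otimes\omega_S\otimes\cO_C\cong\cO_S(C-H)\otimes\cO_C$, so $\cO_S(C)\otimes\cO_C\cong\cO_C(H)$, a line bundle of degree $\delta=\deg(C)$ on the elliptic curve $C$.

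Next I would compute $h^i\big(S,\cO_S(C)\big)$ using the exact sequence
$$
0\longrightarrow\cO_S\longrightarrow\cO_S(C)\longrightarrow\cO_S(C)\otimes\cO_C\longrightarrow0.
$$
On $C$ the line bundle $\cO_S(C)\otimes\cO_C\cong\cO_C(H)$ has degree $\delta\ge1$, hence (Riemann--Roch on the elliptic curve $C$) it has $h^0=\delta$ and $h^1=0$; combined with $h^0(S,\cO_S)=1$, $h^1(S,\cO_S)=h^2(S,\cO_S)=0$ for a del Pezzo surface, and the fact that $C$ is effective and connected (so the map $H^0(S,\cO_S)\to H^0(C,\cO_S(C)\otimes\cO_C)$ is injective with the constants missing from the degree-$\delta$ system), the long exact sequence gives $h^0\big(S,\cO_S(C)\big)=\delta+1$ and $h^1\big(S,\cO_S(C)\big)=0$.

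Then I would pass to the normal bundle of $C$ in $F$ via the conormal/normal sequence of the flag $C\subseteq S\subseteq F$:
$$
0\longrightarrow\cN_{C\vert S}\longrightarrow\cN_{C\vert F}\longrightarrow\cN_{S\vert F}\otimes\cO_C\longrightarrow0.
$$
Here $\cN_{C\vert S}\cong\cO_S(C)\otimes\cO_C\cong\cO_C(H)$ has degree $\delta$, and $\cN_{S\vert F}\cong\cO_F(H)\otimes\cO_S=\cO_S(H)$ (as $S$ is a hyperplane section), so $\cN_{S\vert F}\otimes\cO_C\cong\cO_C(H)$ also has degree $\delta$. Both are degree-$\delta$ line bundles on an elliptic curve, so each has $h^0=\delta$, $h^1=0$; the long exact sequence then forces $h^0\big(C,\cN_{C\vert F}\big)=2\delta$ and $h^1\big(C,\cN_{C\vert F}\big)=0$, and in fact the extension sequence splits on cohomology automatically since the $h^1$ of the sub is zero.

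The main obstacle I anticipate is not any single cohomological computation — those are routine once the identifications $\cN_{C\vert S}\cong\cO_C(H)$ and $\cN_{S\vert F}\otimes\cO_C\cong\cO_C(H)$ are in place — but rather making sure the degree-$\delta$ line bundle $\cO_C(H)$ on the elliptic curve $C$ genuinely has $h^1=0$, i.e. that $\delta\ge1$ and that we are in the strictly positive-degree regime where Riemann--Roch on a genus-one curve gives $h^0=\delta$, $h^1=0$ with no exceptional cases; since $C\subseteq S\subseteq\p{d+1}$ is embedded by (a sub-system of) $\vert H\vert$ and is not a point, $\delta=\deg(C)\ge1$, so this is fine, but it is worth stating explicitly. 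A minor point to handle carefully is the injectivity of $H^0(S,\cO_S)\to H^0(C,\cO_S(C)\otimes\cO_C)$, which holds because $C$ is non-empty and the structure sheaf map $\cO_S\to\cO_C$ is surjective with $C$ effective.
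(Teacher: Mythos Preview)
Your proof is correct and uses the same two exact sequences as the paper: the normal-bundle sequence for $C\subseteq S\subseteq F$ and $0\to\cO_S\to\cO_S(C)\to\cO_C(C)\to0$. The one substantive difference is that you invoke adjunction on $S$ to identify $\cO_C(C)\cong\cO_C(h)$ at the outset, so both pieces of $\cN_{C\vert F}$ are visibly degree-$\delta$ line bundles on the elliptic curve and the vanishing $h^1(C,\cO_C(C))=0$ (hence $h^1(S,\cO_S(C))=0$) is immediate from Riemann--Roch on $C$; the paper instead computes $C^2=\delta$ by writing $C$ in the basis $\ell,e_1,\dots,e_{9-d}$ of $\Pic(S)$, and then obtains $h^1(S,\cO_S(C))=0$ via Serre duality on $S$ together with the cohomology of the sequence twisted by $\cO_S(-h)$. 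Your route is a bit more direct and avoids appealing to the explicit blow-up description of $S$. One expositional point: your aside about injectivity of $H^0(S,\cO_S)\to H^0(C,\cO_C(C))$ is not needed---in the long exact sequence the relevant injection is $H^0(S,\cO_S)\hookrightarrow H^0(S,\cO_S(C))$, which is automatic, and surjectivity of the restriction onto $H^0(C,\cO_C(C))$ follows from $h^1(S,\cO_S)=0$, giving $h^0(S,\cO_S(C))=1+\delta$ without further argument.
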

\begin{proof}
We have the isomorphisms $\cN_{C\vert S}\cong\cO_C(C)$ and $\cN_{S\vert F}\cong\cO_S(h)$. Thus we have the exact sequences
\begin{gather*}
0\longrightarrow \cO_C(C)\longrightarrow \cN_{C\vert F}\longrightarrow \cO_C(h)\longrightarrow 0,\\
0\longrightarrow \cO_S\longrightarrow \cO_S(C)\longrightarrow \cO_C(C)\longrightarrow0,
\end{gather*}
hence $\chi(\cN_{C\vert F})=\chi(\cO_C(C))+\chi(\cO_C(h))=\chi(\cO_S(C))+\chi(\cO_C(h))-1$. Since $p_a(C)=1$, it follows that  $h^1\big(C,\cO_{C}(h)\big)=0$, hence Riemann--Roch on $C$ implies $h^0\big(C,\cO_{C}(h)\big)=\delta$.

Let $a\ell-\sum_{i=1}^db_i e_i$ be the class of $C$ in $A^1(S)$. The conditions $\deg(C)=\delta$ and $p_a(C)=1$ yield $C^2=a^2-\sum_{i=1}^db_i^2=\delta>0$. Thus Riemann--Roch theorem on $S$ gives us $\chi(\cO_S(C))=\delta+1$, hence
\begin{equation}
\label{chiNormal}
\chi(\cN_{E\vert F})=2\delta.
\end{equation}

The surface $S$ is del Pezzo, then $h^i\big(S,\cO_S\big)=0$, $i=1,2$. It follows from the cohomology of the second sequence above and by Serre's duality on $S$ that 
$$
h^1\big(C,\cO_C(C)\big)=h^1\big(S,\cO_S(C)\big)=h^1\big(S,\cO_S(-C-h)\big).
$$
The last dimension is $0$. Indeed it suffices to take the cohomology of the second sequence above twisted by $\cO_S(-h)$. At this point the cohomology of the first sequence above yields $h^1\big(E,\cN_{E\vert F}\big)=0$. Combining this last vanishing with equality \eqref{chiNormal} we obtain the last equality of the statement.
\end{proof}

Now we come back to analyse the case $F\cong\p1\times\p1\times\p1$, identifying the elliptic curves on $F$ contained in a smooth hyperplane section. We use the same notation introduced at the end of the previous section: thus $H$ is a general hyperplane in $\p{7}$, and $S:=F\cap H$.

\begin{lemma}
  \label{lSurface}
  $C$ is a smooth, connected elliptic curve of degree $8$ on $S$ if and only if, up to automorphisms of $S$, $C$ is a smooth connected element in the class either  $3\ell-e_1$, or $ 4\ell-2e_1-2e_2$.
\end{lemma}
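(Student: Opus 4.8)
The plan is to rephrase everything in terms of $\Pic(S)$. Since $H$ is general, $S=F\cap H$ is the (unique) smooth del Pezzo surface of degree $6$, that is the blow-up $\Bl_{p_1,p_2,p_3}\bP^2$ at three non-collinear points, so that $\Pic(S)=\bZ\ell\oplus\bZ e_1\oplus\bZ e_2\oplus\bZ e_3$ with $\ell^2=1$, $e_i^2=-1$, $\ell\cdot e_i=e_i\cdot e_j=0$ for $i\ne j$, $h\vert_S=-K_S=3\ell-e_1-e_2-e_3$, and whose $(-1)$-curves are exactly $e_1,e_2,e_3,\ell-e_i-e_j$. I write the class of $C$ as $a\ell-b_1e_1-b_2e_2-b_3e_3$, so that $\deg(C)=C\cdot(-K_S)=3a-b_1-b_2-b_3$, while adjunction gives $2p_a(C)-2=C^2+C\cdot K_S=C^2-\deg(C)$ with $C^2=a^2-b_1^2-b_2^2-b_3^2$.

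For the implication $(\Rightarrow)$ assume $C$ is a smooth connected elliptic curve of degree $8$. Being an irreducible curve distinct from every $(-1)$-curve (it has degree $8$, not $1$), $C$ meets all of them non-negatively; in particular $b_i=C\cdot e_i\ge0$. The two numerical conditions then read $3a-\sum b_i=8$ and $a^2-\sum b_i^2=8$. Cauchy--Schwarz gives $(3a-8)^2=(\sum b_i)^2\le 3\sum b_i^2=3(a^2-8)$, i.e. $3a^2-24a+44\le0$, hence $a\in\{3,4,5\}$. Enumerating the non-negative integer solutions for each value of $a$ (up to permuting $e_1,e_2,e_3$) leaves precisely the classes $3\ell-e_1$ (for $a=3$), $4\ell-2e_1-2e_2$ (for $a=4$) and $5\ell-3e_1-2e_2-2e_3$ (for $a=5$). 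Finally the standard Cremona involution centered at $p_1,p_2,p_3$ is a biregular automorphism of $S$ acting on $\Pic(S)$ by $\ell\mapsto2\ell-e_1-e_2-e_3$, $e_i\mapsto\ell-e_j-e_k$, and a direct check shows it sends $5\ell-3e_1-2e_2-2e_3$ to $3\ell-e_1$; so up to automorphisms of $S$ only the two asserted classes survive.

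For the implication $(\Leftarrow)$ I must produce, in each of the two classes, a smooth connected elliptic curve of degree $8$; the degree and the equality $p_a=1$ follow at once from the formulas of the first paragraph. One route is classical: the strict transform of a general plane cubic through $p_1$ lies in $\vert 3\ell-e_1\vert$ and is a smooth connected plane cubic, hence elliptic; the strict transform of a general plane quartic with an ordinary node at $p_1$ and at $p_2$ lies in $\vert 4\ell-2e_1-2e_2\vert$, is smooth and connected and has geometric genus $3-2=1$. Alternatively, writing $3\ell-e_1=(2\ell-e_1)+\ell$ and $4\ell-2e_1-2e_2=(2\ell-e_1-e_2)+(2\ell-e_1-e_2)$ exhibits both classes as sums of base-point-free classes, hence base-point-free; Bertini then gives a smooth general member $C$, and a short Riemann--Roch computation using $h^i(S,\cO_S)=0$ for $i>0$ gives $h^1(S,\cO_S(-C))=0$, so $h^0(C,\cO_C)=1$ and $C$ is connected.

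The only step that is not pure bookkeeping is the reduction of the spurious solution $a=5$ to the class $3\ell-e_1$: one must know that the Cremona transformation is genuinely a regular automorphism of the degree-$6$ del Pezzo surface (equivalently, that $\operatorname{Aut}(S)$ surjects onto the group of isometries of $\Pic(S)$ fixing $K_S$ and the hexagon of $(-1)$-curves), which is classical for $d=6$. Everything else reduces to elementary manipulations with the intersection form.
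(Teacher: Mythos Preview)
Your proof is correct and follows essentially the same route as the paper's: writing $C\sim a\ell-\sum b_ie_i$, deriving $3a-\sum b_i=8$ and $a^2-\sum b_i^2=8$, bounding $a\in\{3,4,5\}$ via Cauchy--Schwarz, enumerating, and using the Cremona involution to identify the $a=5$ class with $3\ell-e_1$. Your justification for $b_i\ge0$ (intersection of the irreducible curve $C$ with the $(-1)$-curves $e_i$) is more careful than the paper's bare claim ``$C$ effective $\Rightarrow a,b_i\ge0$''; and you supply the converse direction $(\Leftarrow)$, constructing smooth connected members in each class, which the paper's proof omits.
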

\begin{proof}
  Let $a\ell-\sum_{i=1}^{3}b_i e_i$ be the class of $C$ in $A^1(S)$. Since $C$ is effective, it follows that $a,b_i\ge0$. Moreover, one easily checks that the conditions $\deg(C)=8$ and $p_a(C)=1$ yield
  \begin{equation}
    \label{deg-genus}
    3a-\sum_{i=1}^3b_i=8,\qquad a^2-\sum_{i=1}^3b_i^2=8.
  \end{equation}
  Schwarz's inequality implies $(\sum_{i=1}^3b_i)^2\le 3\sum_{i=1}^3b_i^2$. Combining such an inequality with equalities \eqref{deg-genus}, we finally obtain 
  $6a^2-48a+88\le0$, i.e. $a=3,4,5$. 

  Now, again by equalities \eqref{deg-genus}, we obtain the cases  $3\ell-e_1$, $ 4\ell-2e_1-2e_2$, $5\ell-3e_1-2e_2-2e_3$. The automorphism of $S$ induced by the quadratic transformation of $\p2$ centered at the blown up points maps $(\ell,e_1,e_2,e_3)$ to $(2\ell-e_1-e_2-e_3, \ell-e_2-e_3,\ell-e_1-e_3,\ell-e_1-e_2)$, thus it
  transforms $5\ell-3e_1-2e_2-2e_3$ in $3\ell-e_1$. It follows that we can restrict our attention to  $3\ell-e_1$ and $ 4\ell-2e_1-2e_2$ only. 
\end{proof}

We recall that if $E$ is a non--degenerate, smooth elliptic curve on $F$, its class in $A^2(F)$ is either $2h_2h_3+3h_1h_3+3h_1h_2$ or $2h_2h_3+2h_1h_3+4h_1h_2$ (see Lemma \ref{lUlrichc_2}). We will show in the following the existence of non--degenerate, smooth, connected elliptic curves on $F$ of both types. To this purpose we will first show the existence of degenerate elliptic curves in both classes and we will postpone the proof of the existence of non--degenerate curves at the end of the present section. 

\begin{lemma}
  \label{lSurface-1}
  Let $H\subseteq\p{7}$ be a general hyperplane and let $C$ be a smooth, connected elliptic curve of degree $8$ on $S:=F\cap H$. The class of $C$ in $A^2(F)$ is either $2h_2h_3+3h_1h_3+3h_1h_2$ or $2h_2h_3+2h_1h_3+4h_1h_2$. 
\end{lemma}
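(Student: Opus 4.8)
\textbf{Proof plan for Lemma \ref{lSurface-1}.}

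The plan is to use the classification of $C$ on $S$ from Lemma \ref{lSurface}: up to automorphisms of $S$ we have $C \in |3\ell - e_1|$ or $C \in |4\ell - 2e_1 - 2e_2|$. So the strategy is to take each of these two classes in $A^1(S)$ and push it forward along the natural map $A^1(S) \to A^2(F)$ induced by the inclusion $C \subseteq S \subseteq F$, then read off the coefficients $\beta_i$ in the basis $h_2h_3, h_1h_3, h_1h_2$. Concretely, for $i=1,2,3$ we have $\beta_i = h_i \cdot [C]$ computed in $A(F)$, and since $[C]$ is supported on the hyperplane section $S$, this equals the intersection number on $S$ of $C$ with the restriction $h_i|_S$. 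So the first task is to identify the classes $h_1|_S, h_2|_S, h_3|_S$ inside $\Pic(S) = \bZ\ell \oplus \bigoplus_{i=1}^{3}\bZ e_i$.

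To do this I would use the standard description of the del Pezzo surface $S$ of degree $6$: it is the blow-up of $\p2$ at three general points, and the three conic-bundle (equivalently, $\bP^1$-fibration) structures on $S$ correspond precisely to the three projections $\pi_i\colon F \to \p1$ restricted to $S$. Each fiber class of such a pencil has self-intersection $0$ and degree $1$ against $h|_S = -K_S = 3\ell - e_1 - e_2 - e_3$. The three pencils on the blow-up of $\p2$ at $p_1,p_2,p_3$ are the pencils of lines through each point, which pull back to $\ell - e_1$, $\ell - e_2$, $\ell - e_3$; these are exactly the three classes $h_j|_S$ (in some order). A quick consistency check: $(\ell-e_i)^2 = 0$, $(\ell - e_i)\cdot(-K_S) = 3 - 1 = 1$, and $(\ell-e_i)\cdot(\ell-e_j) = 1$ for $i \ne j$, matching $h_i h_j \cdot h_k = 1$ and $h_i^2 = 0$ in $A(F)$. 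So up to relabeling, $h_1|_S = \ell - e_1$, $h_2|_S = \ell - e_2$, $h_3|_S = \ell - e_3$.

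Now it is a routine computation. For $C = 3\ell - e_1$: $C\cdot(\ell - e_1) = 3 - 1 = 2$, $C\cdot(\ell - e_2) = 3$, $C\cdot(\ell - e_3) = 3$, giving $(\beta_1,\beta_2,\beta_3) = (2,3,3)$, i.e. the class $2h_2h_3 + 3h_1h_3 + 3h_1h_2$. For $C = 4\ell - 2e_1 - 2e_2$: $C\cdot(\ell - e_1) = 4 - 2 = 2$, $C\cdot(\ell - e_2) = 4 - 2 = 2$, $C\cdot(\ell - e_3) = 4$, giving $(\beta_1,\beta_2,\beta_3) = (2,2,4)$, i.e. the class $2h_2h_3 + 2h_1h_3 + 4h_1h_2$. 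These are exactly the two asserted classes (the ambiguity in labeling the $e_i$ is harmless since the statement is up to permutation of the $h_i$, which is already implicit). The one point that needs a word of care is the claim that the three classes $\ell - e_i$ are indeed the restrictions of the $h_i$ rather than some other pencils; I expect this identification of $h_i|_S$ with the fiber classes of the three conic bundle structures to be the only non-formal step, and it follows because the restriction of $\pi_i$ to the (general, hence smooth) hyperplane section $S$ is still a $\bP^1$-fibration whose fibers are lines, and the only such pencils on the degree-$6$ del Pezzo surface are the $\ell - e_j$ (together with their images under the quadratic transformation, which permutes them). Hence every such $C$ has one of the two stated classes in $A^2(F)$, completing the proof.
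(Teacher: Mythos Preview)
Your approach is correct and essentially the same as the paper's: identify the restrictions $h_i|_S$ with the three conic pencils $\ell - e_j$ on $S$, then read off the class of $C$ in $A^2(F)$. The paper carries this out by first computing the pushforwards of $\ell$ and of the $e_j$ to $A^2(F)$ and then substituting; you equivalently compute the intersection numbers $\beta_i = C\cdot(h_i|_S)$ directly on $S$ via the projection formula. Two small slips to correct: the fibres of $\pi_i|_S$ are \emph{conics}, not lines (a general hyperplane section of the Segre quadric $\p1\times\p1\subset\p3$ has degree $2$), and accordingly your consistency check should read $(\ell-e_i)\cdot(-K_S)=3-1=2$, which matches $h_i\cdot h^2 = 2$ in $A(F)$ rather than $h_ih_jh_k=1$. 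Neither slip affects the main computation, which is correct.
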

\begin{proof}
  First we deal with the classes of $\ell$ and $e_i$ in $A^2(F)$. Recall that the class of $S$ inside $A^1(F)$ is the class of the hyperplane section, i.e. $h=h_1+h_2+h_3$: thus, e.g. $h_1S=h_1h=h_1h_2+h_1h_3$. Moreover the class of the hyperplane section of $S$ is $3\ell-e_1-e_2-e_3$ in $A^1(S)$. 

  We have $h_i h^2=2h_1h_2h_3$, thus $h_i S=h_i h$ is the class of a conic on $S$. Arguing as in the proof of Lemma \ref{lSurface}
 one easily checks that the classes of conics on $S$ are $\ell-e_i$. Thus we can assume that $h_i S=\ell-e_i$ in $A^1(S)$. 

  Now, we turn our attention to $\ell$. Let  $\gamma_1h_2h_3+\gamma_2h_1h_3+\gamma_3h_1h_2$ its class in $A^2(F)$. We have $\gamma_i=\ell h_i\ge0$ and 
  $$
  \gamma_1+\gamma_2+\gamma_3=\ell h=\deg(\ell)=\ell(3\ell-e_1-e_2-e_3)=3
  $$
  If $\gamma_1=0$, then $e_1 h_3=(\ell-h_1S)h_3=-h_1h_2h_3$. Since $e_1$ and $h_3$ are both effective, we obtain an absurd. Thus $\gamma_1\ge1$ and, similarly, $\gamma_2,\gamma_3\ge1$. We conclude that the classes of $\ell$, $e_1$, $e_2$, $e_3$ in $A^2(F)$ are $h_2h_3+h_1h_3+h_1h_2$, $h_2h_3$, $h_1h_3$, $h_1h_2$ respectively.

  Thanks to Lemma \ref{lSurface}, the assertion about the class of $C$ in $A^2(F)$ now follows from direct substitution.
\end{proof}

As explained at the beginning of our analysis, the curves $C$ described in Lemma \ref{lSurface-1}, being degenerate, do not correspond to the bundles we are interested in: indeed such bundles correspond to non--degenerate curves. Thus we have finally to show the existence of non--degenerate curves in the same classes. We will check this with a well--known deformation argument (see \cite{Fa2}). 

\begin{proposition}
  \label{pDef6-1}
  If $E\subseteq F$ is a general element in the class either $2h_2h_3+3h_1h_3+3h_1h_2$ or $2h_2h_3+2h_1h_3+4h_1h_2$. Then $E$ is a non--degenerate smooth, connected elliptic curve.
\end{proposition}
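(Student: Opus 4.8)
The plan is to smooth the degenerate elliptic curves lying inside a hyperplane section out of that hyperplane, using the unobstructedness supplied by Proposition~\ref{pNormal}; this is the deformation argument of \cite{Fa2}. First I would fix a general hyperplane $H\subseteq\p7$, so that $S:=F\cap H$ is a smooth del Pezzo surface of degree $6$, and choose a smooth, connected elliptic curve $C\subseteq S$ of degree $8$ in the class $3\ell-e_1$ (respectively $4\ell-2e_1-2e_2$): such a curve exists because these classes are base--point--free on $S$, so a general member of the corresponding linear system is smooth and connected by Bertini, and by Lemma~\ref{lSurface-1} its class in $A^2(F)$ is $2h_2h_3+3h_1h_3+3h_1h_2$ (respectively $2h_2h_3+2h_1h_3+4h_1h_2$).

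By Proposition~\ref{pNormal} (applied with $d=6$ and $\delta=8$) we have $h^1\big(C,\cN_{C\vert F}\big)=0$ and $h^0\big(C,\cN_{C\vert F}\big)=16$, so the Hilbert scheme of curves of $F$ is smooth at $[C]$ and there is a unique irreducible component $\mathcal H$ through $[C]$, of dimension $16$. Since the Hilbert polynomial is locally constant on $\mathcal H$, every curve parametrized by $\mathcal H$ has degree $8$ and arithmetic genus $1$; since smoothness is an open condition and $h^0\big(C',\cO_{C'}\big)$ is upper semicontinuous in $C'\in\mathcal H$, a general $C'\in\mathcal H$ is smooth with $h^0\big(C',\cO_{C'}\big)\le h^0\big(C,\cO_C\big)=1$, hence a smooth connected elliptic curve; and its class in $A^2(F)$ is the same as that of $C$, being algebraically equivalent to it. So it only remains to prove that a general member of $\mathcal H$ is non--degenerate.

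To see this I would bound the dimension of the locus $\mathcal D\subseteq\mathcal H$ of degenerate curves. Such a curve lies in $F\cap H'$ for some $H'\in(\p7)^\vee$, a $7$--dimensional family; for a general $H'$ the surface $F\cap H'$ is again a del Pezzo surface of degree $6$, on which---by the Diophantine analysis already carried out in Lemma~\ref{lSurface} together with Proposition~\ref{pNormal}---the curves of degree $8$ and arithmetic genus $1$ sweep out finitely many linear systems, each of projective dimension $h^0\big(\cO_{F\cap H'}(C)\big)-1=8$; hyperplanes with singular section form a lower--dimensional locus and contribute no more. Hence $\dim\mathcal D\le 7+8=15<16=\dim\mathcal H$, and since $\mathcal H$ is irreducible its general member is not contained in any hyperplane of $\p7$. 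The point that really needs care is precisely this last estimate: one must rule out that some family of degenerate degree $8$, arithmetic genus $1$ curves on $F$---in particular one coming from singular hyperplane sections or from reducible configurations in the Hilbert scheme---has dimension $15$ or more. All these families are, however, controlled by the classification of Lemma~\ref{lSurface}, so once that verification is spelled out the argument is complete.
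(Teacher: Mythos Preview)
Your argument is correct and follows essentially the same route as the paper's: start from a degenerate elliptic curve $C\subseteq S=F\cap H$ supplied by Lemmas~\ref{lSurface} and~\ref{lSurface-1}, use Proposition~\ref{pNormal} to see that the Hilbert scheme is smooth of dimension $16$ at $[C]$, and then rule out that the whole component consists of degenerate curves by the dimension count $7+8=15<16$. The paper packages this last step as an incidence correspondence $\mathcal X\subseteq\mathcal H\times(\p7)^\vee$ and computes the fibre of $\mathcal X\to(\p7)^\vee$ over the \emph{specific} general $H$ chosen at the outset; upper semicontinuity of fibre dimension then makes the estimate automatic, so the worry you flag about singular hyperplane sections never arises. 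The paper also spells out the constancy of the class in $A^2(F)$ by intersecting the family with a general member of $\vert h_i\vert$ and invoking flatness, whereas you appeal to algebraic equivalence; both are fine here since the class is determined by the intersection numbers $C\cdot h_i$, which are locally constant in a flat family.
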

\begin{proof}
  Let $\mathcal H$ be the union of the components of $\Hilb^{8t}(F)$ containing non--degenerate smooth connected elliptic curves of degree $8$. Take the incidence variety 
  $$
  \mathcal X:=\{\ (C,H)\in \mathcal H\times({\p7})^\vee\ \vert\ C\subseteq H\ \}
  $$
  Let $(C,H)\in\mathcal X$ be general. In particular $H\subseteq\p7$ is general, then $S:=F\cap H$ is a smooth del Pezzo surface of degree $6$ as above and $C\subseteq S$ is a smooth connected elliptic curve whose class in $A^2(F)$ is $\gamma_1h_2h_3+\gamma_2h_1h_3+\gamma_3h_1h_2$. Anyhow $\deg(C)=8$, hence Proposition \ref{pNormal} yields $h^0\big(C,\cN_{C\vert S}\big)=16$ and $h^1\big(C,\cN_{C\vert S}\big)=0$. It follows that $\mathcal H$ is smooth at the point corresponding to $C$ (briefly we say that $C$ is unobstructed) and it has dimension $16$.  Assume that all deformation of $C$ inside $F$ were degenerate.

  On the one hand, the image of the projection $\mathcal X\to\mathcal H$ would contain an open neighbourhood of the point corresponding to $C$. Since the points in the fibre over $C$ are parametrized by the hyperplanes containing $C$, it follows that such a projection would be generically injective, hence $\mathcal X$ would contain an irreducible component $\widehat{\mathcal X}$ of dimension $16$ containing $(C,H)$.

  On the other hand the fibre of the projection $\widehat{\mathcal X}\to({\p7})^\vee$ over $H$ is isomorphic to the projectivization of $H^0\big(S,\cO_S(C)\big)$ which is $\p8$ (see Proposition \ref{pNormal}). We conclude that $\dim(\widehat{\mathcal X})\le 15$, a contradiction. We conclude that each general deformation of $C$ inside $F$ is non--degenerate in $\p7$. 
  
  Let $\mathcal C\subseteq F\times B\to B$ a flat family of curves in $\mathcal H$ with special fibre ${\mathcal C}_{b_0}\cong C$ over $b_0$. Since $C$ is unobstructed, we can assume that $B$ is integral. Since $\mathcal C\to B$ is flat and $C$ is integral, it follows that $\mathcal C$ is integral too. Take a general element in the class $h_i$, say $Q_i$, and consider the family  ${\mathcal Q}_i:={\mathcal C}\cap (Q_i\times B)\to B$. Since $Q_i$ is general and $h_i$ is globally generated, we can assume that ${\mathcal Q}_i$ is a family of $0$--cycles of $F$. Up to a proper choice of homogeneous coordinates  $t_0^{(i)},t_1^{(i)}$ on the $i^{th}$ copy of $\p1$ inside the product $F$, we can assume $\cO_{{\mathcal Q}_i}\cong\cO_{\mathcal C}/(t_1^{(i)})$. Thanks to the choice of $Q_i$, the element $t_1^{(i)}$ is regular element in $\cO_{\mathcal C}$. Thus the Corollary of Theorem 22.5 of \cite{Ma}, implies that ${\mathcal Q}_i$ is flat over $B$. By semicontinuity the degree of the fibre of ${\mathcal Q}_i$ over $b$, which is ${\mathcal C}_b h_i$, is upper semicontinuous, thus there exists an open subset $B_i\subseteq B$ containing $b_0$ such that ${\mathcal C}_b h_i\le \gamma_i$. Since 
  $$
  \gamma_1+\gamma_2+\gamma_3=8=\deg({\mathcal C}_b)={\mathcal C}_b h=\sum_{i=1}^3{\mathcal C}_b h_i\le \gamma_1+\gamma_2+\gamma_3,
  $$
  at each $b\in B':=B_1\cap B_2\cap B_3\subseteq B$, we finally conclude that ${\mathcal C}_b h_i= \gamma_i$, i.e. ${\mathcal C}_b$ is in the same class of $C$ inside $A^2(F)$.  Hence each general element $E\subseteq F$ in the classes $2h_2h_3+3h_1h_3+3h_1h_2$ and $2h_2h_3+2h_1h_3+4h_1h_2$ is non--degenerate.
\end{proof}

We finally state our existence result.

\begin{theorem}
  \label{tElliptic}
  There exist indecomposable initialized aCM bundles $\cE$ of rank $2$ with $c_1=2h$ and $c_2$ either $2h_2h_3+3h_1h_3+3h_1h_2$ or $2h_2h_3+2h_1h_3+4h_1h_2$ up to permutations of the $h_i$'s. Moreover the zero--locus of a general section of $\cE$ is an elliptic normal curve. 
  
Conversely,  up to permutations of the $h_i$'s, each elliptic normal curve on $F$ can be obtained as the zero locus of a general section of an initialized aCM bundles $\cE$ of rank $2$ with $c_1=2h$ and $c_2$ either $2h_2h_3+3h_1h_3+3h_1h_2$ or $2h_2h_3+2h_1h_3+4h_1h_2$. The bundle $\cE$ is indecomposable if and only if it is not the complete intersection of two divisors $Q_1\in\vert 2h_2+h_3\vert$ and $Q_2\in\vert 2h_1+h_3\vert$. 
\end{theorem}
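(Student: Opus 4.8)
The plan is to lean on the correspondence set up in the paragraphs preceding Lemma \ref{lUlrichc_2} between non--degenerate smooth connected elliptic curves of degree $8$ on $F$ and initialized aCM bundles of rank $2$ with $c_1=2h$ (which are Ulrich by Proposition \ref{pUpper6-1}). Concretely: given such a curve $E$, adjunction gives $\det(\cN_{E\vert F})\cong\cO_F(2h)\otimes\cO_E$, and since $h^1\big(F,\cO_F(-2h)\big)=h^2\big(F,\cO_F(-2h)\big)=0$, Theorem \ref{tSerre} produces a rank $2$ bundle $\cE$, unique up to isomorphism, with $\det(\cE)\cong\cO_F(2h)$ and a section vanishing along $E$; the argument with Sequences \eqref{seqIdealEF}, \eqref{seqIdeal} and Serre duality shows $\cE$ is aCM, hence Ulrich. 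Conversely, the zero locus of a general section of any such $\cE$ is smooth (global generation), connected and linearly normal (Sequences \eqref{seqIdeal} and \eqref{seqStandard}), of degree $hc_2=8$ and arithmetic genus $\frac12 c_1c_2-hc_2+1=1$ by Lemma \ref{lInvariantE} and Formulas \eqref{c_1c_2-6-1}, \eqref{hc_2-6-1} --- i.e.\ an elliptic normal curve --- with $c_2$ one of the two classes in Lemma \ref{lUlrichc_2}. Together with the existence of such curves (Proposition \ref{pDef6-1}) this gives the first two sentences and the converse, provided one also checks that some bundle with each of the two values of $c_2$ is indecomposable; the ``general section'' wording in the converse is automatic since, $E$ being smooth, the section yielding it lies in the open locus of sections with smooth zero locus.

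The genuinely new part is the last dichotomy, which I would get by classifying the decomposable bundles. Suppose $\cE\cong\mathcal{L}_1\oplus\mathcal{L}_2$ with $c_1=2h$. Each $\mathcal{L}_i$ is a summand of an aCM bundle, hence aCM, and a quotient of the globally generated bundle $\cE$, hence globally generated, so $\mathcal{L}_i=\cO_F(D_i)$ with $D_i\ge0$; $h^0\big(F,\cE(-h)\big)=0$ forces $h^0\big(F,\cO_F(D_i-h)\big)=0$, so each $D_i$ has a vanishing coordinate, and $D_1+D_2=2h$ puts every coordinate of $D_1,D_2$ in $\{0,1,2\}$, with $D_1$ having a coordinate equal to $0$ and --- as $D_2$ has a vanishing coordinate --- one equal to $2$. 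If the third coordinate of $D_1$ were $0$ or $2$, one of $\mathcal{L}_1,\mathcal{L}_2$ would be $\cO_F(2h_j)$, which is not aCM (a K\"unneth computation gives $h^2\big(F,\cO_F(2h_j-2h)\big)\neq0$); hence, up to permuting the $h_i$'s, $\cE\cong\cO_F(2h_1+h_3)\oplus\cO_F(2h_2+h_3)$. Both summands occur in Lemma \ref{lInvaCM}, so this bundle is aCM, and it is readily checked to be initialized with $h^0=12$, hence Ulrich, with $c_2=2h_2h_3+2h_1h_3+4h_1h_2$. It follows that every bundle with $c_1=2h$ and $c_2=2h_2h_3+3h_1h_3+3h_1h_2$ is indecomposable, whereas a bundle with $c_1=2h$ and $c_2=2h_2h_3+2h_1h_3+4h_1h_2$ is decomposable exactly when it is $\cO_F(Q_1)\oplus\cO_F(Q_2)$ for divisors $Q_1\in\vert2h_2+h_3\vert$, $Q_2\in\vert2h_1+h_3\vert$; since the general section of $\cO_F(Q_1)\oplus\cO_F(Q_2)$ vanishes along the complete intersection of a divisor in $\vert2h_2+h_3\vert$ with one in $\vert2h_1+h_3\vert$, and conversely (Koszul complex plus the uniqueness in Theorem \ref{tSerre}) such a complete intersection forces $\cE$ to split, this is precisely the claim that $\cE$ is indecomposable if and only if it is not such a complete intersection.

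To complete the existence statement I still have to produce an indecomposable bundle with $c_1=2h$ and $c_2=2h_2h_3+2h_1h_3+4h_1h_2$; I would do this by a dimension count. By Proposition \ref{pDef6-1} a general element $E$ of this class is a non--degenerate smooth connected elliptic curve, and by Proposition \ref{pNormal} such curves lie on a component of $\Hilb^{8t}(F)$ that is smooth of dimension $h^0\big(C,\cN_{C\vert F}\big)=16$; on the other hand the complete intersections of a divisor in $\vert2h_2+h_3\vert$ with one in $\vert2h_1+h_3\vert$ sweep out a family of dimension $\dim\vert2h_2+h_3\vert+\dim\vert2h_1+h_3\vert=10$. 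As $10<16$, a general such $E$ is not a complete intersection of that type, so the Ulrich bundle attached to it is not $\cO_F(Q_1)\oplus\cO_F(Q_2)$ and hence is indecomposable. The main obstacle is exactly this last point: one must be sure that the $10$--dimensional family of complete intersections is really thin inside the $16$--dimensional component of elliptic normal curves, i.e.\ that it is not, for some structural reason, dense in it --- this is what the smoothness (hence local irreducibility) and the dimension in Proposition \ref{pNormal} are for.
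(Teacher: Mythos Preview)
Your argument is correct, and the first two paragraphs run essentially parallel to the paper's: the correspondence between non--degenerate smooth elliptic octics and Ulrich bundles with $c_1=2h$ is exactly what the paper invokes, and your classification of the decomposable case agrees with the paper's (which reaches $\cE\cong\cO_F(2h_2+h_3)\oplus\cO_F(2h_1+h_3)$ by combining Lemma \ref{lInvaCM} with the constraints $c_1(\mathcal L_1)+c_1(\mathcal L_2)=2h$ and $h^0(\mathcal L_1)+h^0(\mathcal L_2)=12$, rather than by your coordinate/global--generation analysis --- but the content is the same).

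The genuine divergence is in producing an \emph{indecomposable} bundle with $c_2=2h_2h_3+2h_1h_3+4h_1h_2$. The paper does this algebraically: it computes
\[
\dim_k\Ext^1_F\big(\cO_F(2h_2+h_3),\cO_F(2h_1+h_3)\big)=h^1\big(F,\cO_F(2h_1-2h_2)\big)=3,
\]
so non--trivial extensions exist; any such extension $\cE$ is aCM (both ends are), and since $\Hom_F\big(\cO_F(2h_1+h_3),\cO_F(2h_2+h_3)\big)=0$, a direct--sum splitting of $\cE$ would force the extension itself to split. Hence every non--zero class gives an indecomposable bundle. Your route is instead a Hilbert--scheme dimension count: the component of curves in this class has dimension $16$, while complete intersections of type $\vert2h_2+h_3\vert\cap\vert2h_1+h_3\vert$ sweep out only a $10$--dimensional family, so the general curve cannot arise from the split bundle. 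Both arguments are valid; the paper's is more explicit and yields a concrete $3$--parameter family of indecomposable bundles, whereas yours leverages the deformation machinery already in place and avoids the $\Ext$/$\Hom$ bookkeeping.

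One small citation issue: you invoke Proposition \ref{pNormal} for the dimension $16$, but that proposition is stated for \emph{degenerate} curves $C\subseteq S=F\cap H$. The $16$--dimensionality of the relevant component is established in the \emph{proof} of Proposition \ref{pDef6-1} (using Proposition \ref{pNormal} at the degenerate specialization, together with $h^1(\cN_{C\vert F})=0$ for smoothness). Citing that proof rather than Proposition \ref{pNormal} directly would make your last paragraph airtight.
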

\begin{proof}
Almost all the statement follows from the previous proposition and from the correspondence between non--degenerate smooth elliptic curves and bundles with $c_1=2h$. The only unproved assertion is that there are indecomposable bundles of each type.

Assume that $\cE\cong{\mathcal L}_1\oplus{\mathcal L}_2$ for some ${\mathcal L}_i\in\Pic(F)$. Thus we know that ${\mathcal L}_i$ is aCM. Moreover either both the ${\mathcal L}_i$ are initialized or one of them is initialized and the other has no sections. 

Thanks to Lemma \ref{lInvaCM} and to the equalities
\begin{gather*}
  h^0\big(F,{\mathcal L}_1\big)+h^0\big(F,{\mathcal L}_2\big)=h^0\big(F,\cE\big)=12,\\
  c_1({\mathcal L}_1)+c_1({\mathcal L}_1)=c_1=2h
\end{gather*}
it is easy to check that $\cE\cong\cO_F(2h_2+h_3)\oplus\cO_F(2h_1+h_3)$ necessarily. In particular in the case $(2,3,3)$, i.e. $c_2=2h_2h_3+3h_1h_3+3h_1h_2$, the vector bundle $\cE$ is indecomposable.

Consider the other case $(2,2,4)$, i.e. $c_2=2h_2h_3+2h_1h_3+4h_1h_2$. We have 
$$
\dim_k\left(\Ext^1_F\big(\cO_F(2h_2+h_3),\cO_F(2h_1+h_3)\big)\right)=h^1\big(F,\cO_F(2h_1-2h_2)\big)=3,
$$
thus there are non--trivial extensions of the form
\begin{equation}
\label{seqExtension}
0\longrightarrow\cO_F(2h_1+h_3)\longrightarrow\cE\longrightarrow\cO_F(2h_2+h_3)\longrightarrow0.
\end{equation}
Notice that $\cE$ is aCM, because both $\cO_F(2h_2+h_3)$ and $\cO_F(2h_1+h_3)$ are aCM.

If $\cE$ is decomposable, then we checked above that $\cE\cong\cO_F(2h_2+h_3)\oplus\cO_F(2h_1+h_3)$. Since 
$$
\dim\left(\Hom_F\big(\cO_F(2h_1+h_3),\cO_F(2h_2+h_3)\big)\right)=h^0\big(F,\cE(-h_1+2h_2-h_3)\big)=0,
$$
it follows that the epimorphism $\cE\to\cO_F(2h_2+h_3)$ has a section, thus Sequence \eqref{seqExtension} splits. In particular $\cE$ splits if and only if the extension \eqref{seqExtension} splits too. Thus each non--zero section of $\Ext^1_F\big(\cO_F(2h_2+h_3),\cO_F(2h_1+h_3)\big)$ induces an indecomposable bundle.
\end{proof}

\section{The sporadic extremal case}
\label{sSporadic}
In this section we deal with the case $c_1=h_1+2h_2+3h_3$. Again Proposition \ref{pUpper6-1} guarantees that $\cE$ is Ulrich, hence globally generated. It follows that the zero--locus  $E:=(s)_0$ of a general section $s\in H^0\big(F,\cE\big)$ is a smooth curve inside $F$. 

Notice that $\cO_F(-c_1)$ is aCM. This fact and the cohomology of Sequence \eqref{seqIdeal} yield $h^1\big(F,\cI_{E\vert F}\big)=h^1\big(F,\cI_{E\vert F}(h)\big)=0$. It follows that both $h^0\big(E,\cO_{E}\big)=1$ and $E$ is linearly normal (use Sequence \eqref{seqStandard}). We conclude that we can assume $E$ to be a non--degenerate, smooth, irreducible curve. Moreover, again by combining Lemma \ref{lInvariantE} with $D=0$ and Formula \eqref{hc_2-6-1}, one easily obtains $\deg(E)=hc_2=7$. It follows that $E$ is a rational normal curve in $\p7$.

Conversely, let $E$ be a non--degenerate, smooth, rational curve of degree $7$ in the class $\beta_1h_2h_3+\beta_2h_1h_3+\beta_3h_1h_2$ on $A^2(F)$. We know by adjunction that $\cO_\p1(-2)\cong\omega_E\cong\det(\cN_{E\vert F})\otimes\cO_F(-2h)$, thus $\det(\cN_{E\vert F})\cong\cO_{\p1}(12)$. The restriction to $E$ of $\vert h_i\vert$, $i=1,2,3$, is a linear system of divisors on $\p1$ of degree $\beta_i$, hence its dimension is $\beta_i$. Since $E$ is non--degenerate and $\vert h_i\vert$ is a pencil on $F$, it follows that $\beta_i\ge1$. The same argument applied to $\vert h_i+h_j\vert$, $i,j=1,2,3$ and $i\ne j$, similarly shows that $\beta_i+\beta_j\ge3$. We conclude that the possible values of  $(\beta_1,\beta_2,\beta_3)$ are  $(3,2,2)$, $(4,1,2)$, $(3,3,1)$ up to permutations of the $h_i$'s.

Let us now consider the first case, i.e. the class of $E$ is $3h_2h_3+2h_1h_3+2h_1h_2$. Such an $E$ cannot be the zero locus of a section of any aCM bundle $\cE$ with $c_2=3h_2h_3+2h_1h_3+2h_1h_2$. On the one hand, Lemma  \ref{lInvariantE} would imply that $c_1c_2=12$. It follows that if $c_1=\alpha_1h_1+\alpha_2h_2+\alpha_3h_3$, then $(\alpha_1,\alpha_2,\alpha_3)=(1,2,3)$, up to permutations. On the other hand, the first of the equalities of System \eqref{AlphaBeta} shows that there is no permutation of the $h_i$'s such that $c_1c_2=12$.

\begin{remark} 
Smooth rational curves in the class $3h_2h_3+2h_1h_3+2h_1h_2$ actually exist. Indeed choose three general maps from $\p1$ to $\p1$ of degrees $3$, $2$, $2$. Their product gives a morphism $\p1\to F$ whose image $E$ is a rational curve.

Using any computer algebra software (e.g. \cocoa: see \cite{CoCoA}) one can compute the homogeneous ideal of $E$ in $\p7$ checking that it is minimally generated by quadratic forms, hence $E$ is non--degenerate. In the same way one can also check that the Hilbert polynomial of $E$ is $7t + 1$. It follows that the arithmetic genus of $E$ is $0$. We conclude that $E$ is a smooth rational curve of degree $7$.

The sheaf $\mathcal L:=\cO_F (2h_1 + 2h_2 + h_3)$ satisfies the hypothesis of Theorem \ref{tSerre}. Thus $E$ is the zero locus of a section of rank $2$ bundle $\cE$ on $F$ with $c_1 = 2h_1 +2h_2 +h_3$ and $c_2 = 3h_2h_3 +2h_1h_3 +2h_1h_2$. The cohomology of Sequence \eqref{seqIdeal} twisted by $\cO_F(-h)$ shows that $\cE$ is initialized. An analysis of all the possible cases also shows that $\cE$ is indecomposable. Nevertheless, $\cE$ cannot be aCM, thanks to the discussion above.
\end{remark}

Let us examine the cases $(4,1,2)$ and $(3,3,1)$. The sheaf $\mathcal L:=\cO_F(h_1+2h_2+3h_3)$ satisfies the hypothesis of Theorem \ref{tSerre}. Thus $E$ is the zero locus of a section $s$ of a vector bundle $\cE$ of rank $2$ on $F$ with $c_1=h_1+2h_2+3h_3$ and $c_2$ either $4h_2h_3+h_1h_3+2h_1h_2$, or $3h_2h_3+3h_1h_3+h_1h_2$ respectively. Moreover, $E$ is aCM (see Proposition 1.1 and Corollary 2.2 of \cite{C--G--N}), and the same holds for $F$. 

\begin{lemma}
Let $E$ be a non--degenerate, smooth, rational curve of degree $7$ whose class in $A^2(F)$ is either $4h_2h_3+h_1h_3+2h_1h_2$, or $3h_2h_3+3h_1h_3+h_1h_2$. Then the bundle $\cE$ defined above is Ulrich.
\end{lemma}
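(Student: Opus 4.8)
The plan is to show that $\cE$ is aCM and initialized; once that is done, $\cE$ is Ulrich by the argument used in the proof of Proposition \ref{pUpper6-1} for the class $h_1+2h_2+3h_3$ (namely $\chi(\cE)=12$ via \eqref{RRgeneral}, while $h^1(F,\cE)=h^2(F,\cE)=0$ by aCM and $h^3(F,\cE)=h^0(F,\cE(-2h-c_1))=0$, the last equality being immediate from \eqref{seqIdeal} and \eqref{seqStandard}; hence $h^0(F,\cE)=12=\rk(\cE)\deg(F)$).

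First I would record the elementary K\"unneth vanishings
$$
h^1(F,\cO_F(th))=h^2(F,\cO_F(th))=h^1(F,\cO_F(c_1+th))=h^1(F,\cO_F(th-c_1))=0\qquad(t\in\bZ),
$$
each holding because the triple of twists involved is either constant or strictly monotone, hence never of the mixed-sign shape needed to produce $H^1$ or $H^2$ on a product of three lines. Recalling that $E$ is aCM in $\p7$ (cited from \cite{C--G--N} above) and $F$ is aG, twisting Sequence \eqref{seqIdealEF} by $\cO_{\p7}(t)$ gives $h^1(F,\cI_{E\vert F}(th))=0$ for all $t$. Twisting the Koszul sequence \eqref{seqIdeal} (with $D=0$) by $\cO_F(th-c_1)$ and using $\cE(-c_1)\cong\cE^\vee$ produces
$$
0\longrightarrow\cO_F(th-c_1)\longrightarrow\cE^\vee(th)\longrightarrow\cI_{E\vert F}(th)\longrightarrow 0,
$$
whose outer $H^1$'s vanish, so $h^1(F,\cE^\vee(th))=0$ for every $t$; by Serre duality ($\omega_F=\cO_F(-2h)$) this is precisely $h^2(F,\cE(th))=0$ for all $t$.

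It then remains to show $h^1(F,\cE(th))=0$ for all $t$. Twisting \eqref{seqIdeal} by $\cO_F(th)$ and \eqref{seqStandard} by $\cO_F(c_1+th)$, the K\"unneth vanishings above identify $h^1(F,\cE(th))$ with the cokernel of the restriction map $r_t\colon H^0(F,\cO_F(c_1+th))\to H^0(E,\cO_E(c_1+th))$. For $t\le-2$ the target is zero. For $t\ge0$ I would argue by induction on $t$: writing $\cO_F(c_1+th)=\cO_F(h)\otimes\cO_F(c_1+(t-1)h)$, the multiplication $H^0(F,\cO_F(h))\otimes H^0(F,\cO_F(c_1+(t-1)h))\to H^0(F,\cO_F(c_1+th))$ is surjective (being the tensor product, over the three factors, of surjections $H^0(\p1,\cO(1))\otimes H^0(\p1,\cO(a))\to H^0(\p1,\cO(a+1))$ with $a\ge0$), the analogous multiplication on $E\cong\p1$ is surjective (both line bundles having nonnegative degree), and $H^0(F,\cO_F(h))\to H^0(E,\cO_E(h))$ is an isomorphism since $E$ is a non-degenerate curve of degree $7$; hence surjectivity of $r_{t-1}$ forces that of $r_t$. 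The induction is anchored once we know that $r_{-1}$, a map between two $6$-dimensional spaces, is an isomorphism, i.e. that $h^0(F,\cI_{E\vert F}(c_1-h))=0$: $E$ lies on no effective divisor of class $c_1-h=h_2+2h_3$.

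This last non-containment is the only real obstacle, and it is exactly where the precise class of $E$ enters. Since $\cO_F(h_2+2h_3)$ is the pull-back along $(\pi_2,\pi_3)\colon F\to\p1\times\p1$ of $\cO(1,2)$, it suffices to see that the image curve $(\pi_2,\pi_3)(E)$ lies on no curve of bidegree $(1,2)$. In both admissible classes, $4h_2h_3+h_1h_3+2h_1h_2$ and $3h_2h_3+3h_1h_3+h_1h_2$, exactly one of $\pi_2\vert_E$, $\pi_3\vert_E$ has degree $1$, so $(\pi_2,\pi_3)(E)$ is the graph of a morphism $\p1\to\p1$ of degree $2$, respectively $3$; such a graph is an irreducible curve of bidegree $(2,1)$, respectively $(1,3)$, and neither class is $\le(1,2)$ coefficientwise, so it cannot be contained in any curve of bidegree $(1,2)$. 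Hence $h^0(F,\cI_{E\vert F}(c_1-h))=0$, all $r_t$ are surjective, and $h^1(F,\cE(th))=0$ for all $t$, so $\cE$ is aCM. Finally $\cE$ is initialized, since $h^0(F,\cE(-h))=h^0(F,\cI_{E\vert F}(c_1-h))=0$ (from \eqref{seqIdeal}) while $h^0(F,\cE)\ge1$; and the Riemann--Roch computation recalled at the start then gives $h^0(F,\cE)=\rk(\cE)\deg(F)$, so $\cE$ is Ulrich.
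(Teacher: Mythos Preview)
Your proof is correct and follows the same overall architecture as the paper: show $h^2(F,\cE(th))=0$ via $\cE^\vee\cong\cE(-c_1)$ and the aCM property of $E$ in $\p7$, then reduce $h^1(F,\cE(th))=0$ to surjectivity of the restriction $r_t\colon H^0(F,\cO_F(c_1+th))\to H^0(E,\cO_E(c_1+th))$, and handle $t\le-2$, $t=-1$, $t\ge0$ separately.

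The substantive difference is the treatment of $t=-1$. The paper argues geometrically: any $M\in\vert h_2+2h_3\vert$ containing $E$ would be an integral non-degenerate surface of degree $6$ in $\p7$, hence a surface of minimal degree, and (after excluding the cone case because $F$ is smooth) a smooth rational normal scroll; analysing $\Pic(M)$ then forces the class of $E$ in $A^2(F)$ to be $4h_2h_3+2h_1h_3+h_1h_2$ or $h_2h_3+4h_1h_3+2h_1h_2$, contradicting the hypothesis. Your route is more elementary and exploits the product structure of $F$ directly: since $\cO_F(h_2+2h_3)$ is pulled back from $\p1\times\p1$ via $(\pi_2,\pi_3)$, the non-containment reduces to checking that the image curve, which is a graph of bidegree $(2,1)$ or $(1,3)$, cannot sit inside a curve of bidegree $(1,2)$. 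This avoids the classification of minimal-degree surfaces entirely. A minor secondary difference is that for $t\ge0$ the paper uses the decomposition $c_1+th=(t+1)h+(c_1-h)$ together with the aCM property of $E$ (so $\varphi_t$ is surjective for every $t$), whereas you use $c_1+th=h+(c_1+(t-1)h)$ and induct from $t=-1$; both are fine.
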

\begin{proof}
Since $E$ is aCM, it follows that $h^1\big(\p7,\cI_{E\vert\p7}(th)\big)=h^2\big(\p7,\cI_{F\vert\p7}(th)\big)=0$. Again the cohomology of Sequence \eqref{seqIdealEF} implies $h^1\big(F,\cI_{E\vert F}(th)\big)=0$. The cohomology of Sequence \eqref{seqIdeal} twisted by $\cO_F(-c_1)$ implies for $t\in\bZ$
$$
h^2\big(F,\cE(th)\big)=h^1\big(F,\cE^\vee((-2-t)h)\big)=h^1\big(F,\cI_{E\vert F}((-2-t)h)\big)=0.
$$

We check that $h^1(F,{\cE}(th)\big)=h^1\big(F,\cI_{E\vert F}(c_1+th)\big)=0$, $t\in\bZ$. Notice that $\deg(\cO_E(c_1+th))=(c_1+th)c_2=12+7t$. Hence $h^1\big(F,\cI_{E\vert F}(c_1+th)\big)\le h^0\big(E,\cO_{E}(c_1+th)\big) =0$ when $t\le -2$, because $E\cong\p1$. 

Let $t=-1$. We have $(h_2+2h_3)E=5$. Hence $h^0\big(E,\cO_E(h_2+2h_3)\big)=6$. The cohomologies of Sequences \eqref{seqStandard} and \eqref{seqIdeal} suitably twisted yield
$$
h^1\big(F,\cE(-h))=h^0\big(F,\cE(-h))=h^0\big(F,\cI_{E\vert F}(h_2+2h_3)\big).
$$

Assume the existence of a surface $M\subseteq F$ in $\vert\cO_F(h_2+2h_3)\vert$ through $E$. We have $\deg(M)=6$. If $M$ were not integral, then $E$, being integral, would be contained in an integral surface $M'\subseteq F$ of degree at most $5$. In particular $M'$ would be degenerate in $\p7$, thus the same would hold for $E$, a contradiction. It follows that $M$ is integral and it is easy to check that it is non--degenerate, thus a surface of minimal degree in $\p7$. Hence it is either a smooth rational normal scroll, or it is the cone onto a smooth rational normal curve of degree $6$ contained in a hyperplane $H$ with vertex a point $V\not\in H$ (see \cite{G--H}, p. 522).

In second case the tangent space at $M$ in $V$ is the whole $\p7$. Since such a space is contained in the tangent space at $F$ in $V$, which is a smooth variety, we get a contradiction. Hence $M\cong\bP(\cO_{\p1}\oplus\cO_{\p1}(-e))$ for some integer $e\ge0$. Let $\xi$ be the tautological divisor on $M$ and $f$ the class of a fibre. The embedding $M\subseteq\p7$ is given by some linear system $\vert\xi+af\vert$ with $a>0$.

Since a rational normal curve has no trisecant curves because it is cut out by quadrics (see \cite{G--H}, p. 530), it follows that $E$ is either in $\vert (\xi+af)+bf\vert$, or in $\vert 2(\xi+af)-cf\vert$ for suitable integers $b$ and $c$. The conditions $\deg(M)=6$ and $\deg(E)=7$ imply $b=1$ and $c=-5$. 

The class of $\vert\xi+af\vert$ in $A^2(F)$ is $hM=3h_2h_3+2h_1h_3+h_1h_2$. The linear system $\vert\cO_M(h_3)\vert$ is a pencil of lines on $M$. An easy computation in $\Pic(M)$ shows that the only pencil of lines on $M$ is $\vert f\vert$, thus the class of $f$ inside $A^2(F)$ is $h_2h_3$.  By combining the above results we finally obtain that the class of $E$ in $A^2(F)$ must be either $4h_2h_3+2h_1h_3+h_1h_2$, or $h_2h_3+4h_1h_3+2h_1h_2$, contradicting the hypothesis on $E$. We conclude that $h^1\big(F,\cE(-h))=h^0\big(F,\cE(-h))=0$. In particular $\cE$ is initialized.

Let us examine the case $t\ge0$. In this case we have a commutative diagram
$$
\begin{CD}
  H^0\big(F,\cO_{F}((t+1)h)\big)\otimes H^0\big(F,\cO_{F}(M)\big)@>\varphi_t\otimes \vartheta_{-1}>> H^0\big(E,\cO_E((t+1)h)\big)\otimes H^0\big(E,\cO_E(M)\big)\\
  @VVV @V\vartheta_tVV\\
  H^0\big(F,\cO_{F}(c_1+th)\big)@>>> H^0\big(E,\cO_E(c_1+th)\big)\\
\end{CD}
$$
where $\varphi_t$ is the natural restriction map: it follows that the top horizontal arrow is surjective. Since $E\cong\p1$ the right vertical arrow is an epimorphism. Thus $\vartheta_t$ is surjective too, hence $h^1\big(F,\cI_{E\vert F}(c_1+th)\big)=0$, $t\ge0$. 
The vanishing of $h^i(F,{\cE}(th)\big)=0$, $i=1,2$ and $t\in\bZ$, proved above implies that $\cE$ is aCM, hence $\cE$ is an Ulrich bundle thanks to Proposition \ref{pUpper6-1}. 
\end{proof}

We now construct examples of Ulrich bundles with $c_1=h_1+2h_2+3h_3$. We have that
$$
\dim_k\left(\Ext^1_F\big(\cO_F(2h_2+h_3),\cO_F(h_1+2h_3)\big)\right)=h^1\big(F,\cO_F(h_1-2h_2+h_3)\big)=4,
$$
thus there are non--trivial extensions of the form
$$
  0\longrightarrow \cO_F(h_1+2h_3)\longrightarrow \cE\longrightarrow  \cO_F(2h_2+h_3)\longrightarrow 0.
$$
The bundle $\cE$  has Chern classes are $c_1=h_1+2h_2+3h_3$ and $c_2=4h_2h_3+h_1h_3+2h_1h_2$. 

Since both $\cO_F(2h_2+h_3)$ and $\cO_F(h_1+2h_3)$ are aCM, looking at the cohomology of above sequence twisted by $\cO_F(th)$, it follows that the same is true for $\cE$. Moreover, the same cohomology sequence and vanishing also yield that $\cE$ is initialized.
Arguing as in the previous section we are also able to show that $\cE$ is indecomposable, with the exception of the trivial extension.

The above discussion partially proves the following result.

\begin{theorem}
  \label{tRational}
  There exist indecomposable initialized aCM bundles $\cE$ of rank $2$ with $c_1=h_1+2h_2+3h_3$ and $c_2$ either $4h_2h_3+h_1h_3+2h_1h_2$ or $3h_2h_3+3h_1h_3+h_1h_2$ up to permutation of the $h_i$'s. Moreover the zero--locus of a general section of $\cE$ is a rational normal curve.
  
Conversely, up to permutations of the $h_i$'s, each rational normal curve on $F$ whose class in $A^2(F)$ is not $3h_2h_3+2h_1h_3+2h_1h_2$ can be obtained as the zero locus of a general section of an initialized aCM bundles $\cE$ of rank $2$ with $c_1=h_1+2h_2+3h_3$ and $c_2$ either $4h_2h_3+h_1h_3+2h_1h_2$ or $3h_2h_3+3h_1h_3+h_1h_2$. The bundle $\cE$ is indecomposable if and only if it is not the complete intersection of two divisors $Q_1\in\vert 2h_2+h_3\vert$ and $Q_2\in\vert h_1+2h_3\vert$. 
\end{theorem}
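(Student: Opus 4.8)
The plan is to follow the proof of Theorem~\ref{tElliptic} closely, using the previous lemma and Theorem~\ref{tSerre} to set up a dictionary between non--degenerate rational normal curves on $F$ and aCM bundles with $c_1=h_1+2h_2+3h_3$. Concretely, the sheaf $\mathcal L:=\cO_F(h_1+2h_2+3h_3)$ satisfies $h^1\big(F,\mathcal L^\vee\big)=h^2\big(F,\mathcal L^\vee\big)=0$, so Theorem~\ref{tSerre} applies with uniqueness. If $E\subseteq F$ is a non--degenerate, smooth, rational curve of degree $7$ whose class, up to the action of the symmetric group on $F$, is $4h_2h_3+h_1h_3+2h_1h_2$ or $3h_2h_3+3h_1h_3+h_1h_2$, then adjunction gives $\det(\cN_{E\vert F})\cong\mathcal L\otimes\cO_E$ (both sides having degree $12$ on $E\cong\p1$), hence $E$ is the zero locus of a section of a unique rank~$2$ bundle $\cE$ with $\det\cE\cong\mathcal L$; by the previous lemma $\cE$ is Ulrich, in particular initialized and aCM, with $c_2(\cE)=[E]$. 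Combining this with Proposition~\ref{pUpper6-1} and the discussion preceding the statement (which shows that a general section of an indecomposable initialized aCM bundle with this $c_1$ vanishes along a rational normal curve whose class is $4h_2h_3+h_1h_3+2h_1h_2$ or $3h_2h_3+3h_1h_3+h_1h_2$, the class $3h_2h_3+2h_1h_3+2h_1h_2$ being impossible for an aCM bundle) yields all the assertions of the statement except the existence claim for $c_2=3h_2h_3+3h_1h_3+h_1h_2$ and the precise description of the decomposable bundles.

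For the latter, I would argue as in Theorem~\ref{tElliptic}: if $\cE\cong\mathcal L_1\oplus\mathcal L_2$ then each $\mathcal L_i$ is aCM, so by Lemma~\ref{lInvaCM} the coefficient vector of $c_1(\mathcal L_i)$ spans an integer interval of length at most $2$. Running through the finitely many solutions of $c_1(\mathcal L_1)+c_1(\mathcal L_2)=h_1+2h_2+3h_3$ and $c_1(\mathcal L_1)c_1(\mathcal L_2)=c_2$ subject to this constraint and to $c_2\in\{4h_2h_3+h_1h_3+2h_1h_2,\ 3h_2h_3+3h_1h_3+h_1h_2\}$, the only possibility turns out to be $\{\mathcal L_1,\mathcal L_2\}=\{\cO_F(2h_2+h_3),\cO_F(h_1+2h_3)\}$, which forces $c_2=4h_2h_3+h_1h_3+2h_1h_2$ and, via its Koszul complex, exhibits $\cE$ as the bundle attached to a complete intersection $E=Q_1\cap Q_2$ with $Q_1\in\vert2h_2+h_3\vert$, $Q_2\in\vert h_1+2h_3\vert$. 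Thus every bundle with $c_2=3h_2h_3+3h_1h_3+h_1h_2$ is automatically indecomposable, and for $c_2=4h_2h_3+h_1h_3+2h_1h_2$ indecomposability is equivalent to the non--splitness of the extension $0\to\cO_F(h_1+2h_3)\to\cE\to\cO_F(2h_2+h_3)\to0$ discussed before the statement; this is the ``indecomposable if and only if not a complete intersection'' clause.

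It then remains only to produce an indecomposable initialized aCM bundle with $c_1=h_1+2h_2+3h_3$ and $c_2=3h_2h_3+3h_1h_3+h_1h_2$, equivalently (by the correspondence) a non--degenerate, smooth, rational curve $E\subseteq F$ of degree $7$ in the class $3h_2h_3+3h_1h_3+h_1h_2$. As in the remark on the class $3h_2h_3+2h_1h_3+2h_1h_2$, I would take three general maps $\p1\to\p1$ of degrees $3$, $3$, $1$: since the last one is an isomorphism their product is an immersion, hence an embedding for general choices, and its image $E$ is a smooth rational curve with $Eh_i$ equal to the three degrees, so $[E]=3h_2h_3+3h_1h_3+h_1h_2$ up to permutation; a computer algebra check (e.g. with \cocoa, cf. \cite{CoCoA}) verifies that the homogeneous ideal of $E$ in $\p7$ is generated by quadrics, so $E$ is non--degenerate, with Hilbert polynomial $7t+1$. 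Theorem~\ref{tSerre} together with the previous lemma then yields the desired bundle, which is indecomposable by the previous paragraph. The main obstacle is exactly this existence step: in contrast with the elliptic case, the bundles with $c_2=3h_2h_3+3h_1h_3+h_1h_2$ do not arise as non--split extensions of aCM line bundles (the only candidate decompositions involve the non--aCM sheaf $\cO_F(h_2+3h_3)$), so one must either exhibit the curve explicitly and check its non--degeneracy by computation, or run a deformation argument in the spirit of Proposition~\ref{pDef6-1} from degenerate rational curves of degree $7$ on a smooth hyperplane section $S=F\cap H$.
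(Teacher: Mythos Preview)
Your proposal is correct and closely parallels the paper's proof for most of the argument: the Hartshorne--Serre correspondence via Theorem~\ref{tSerre}, the appeal to the preceding lemma to get the Ulrich property, and the enumeration of possible splittings via Lemma~\ref{lInvaCM} all match. The one genuine divergence is in the existence step for $c_2=3h_2h_3+3h_1h_3+h_1h_2$. You construct the curve explicitly as the image of $\p1$ under a triple of maps of degrees $(3,3,1)$ and propose verifying non--degeneracy by computer (mirroring the paper's earlier Remark on the class $3h_2h_3+2h_1h_3+2h_1h_2$), whereas the paper actually carries out the deformation argument you mention only as an alternative. Concretely, the paper takes a smooth rational septic $C$ on a general hyperplane section $S=F\cap H$ in the class $3\ell-2e_3$ or $4\ell-e_1-e_2-3e_3$ (both of which map to $3h_2h_3+3h_1h_3+h_1h_2$ in $A^2(F)$), computes $h^0\big(C,\cN_{C\vert F}\big)=14$ and $h^1\big(C,\cN_{C\vert F}\big)=0$ from the normal bundle sequence, and then repeats the incidence--variety dimension count of Proposition~\ref{pDef6-1} to deform $C$ off the hyperplane. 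Your explicit construction is quicker and more concrete once one accepts a machine check of non--degeneracy; the paper's route is computer--free and reuses the machinery already set up in Section~\ref{sExtremal}. One small caution: your phrase ``the coefficient vector spans an integer interval of length at most $2$'' is only a necessary condition for a line bundle to be aCM (e.g.\ $\cO_F(2h_1)$ satisfies it but is not aCM), so in the enumeration you must check the candidates against the actual list in Lemma~\ref{lInvaCM}, as the paper does.
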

\begin{proof}
  The above discussion proves the existence of of a $3$--dimensional family of initialized aCM vector bundles with $c_2=4h_2h_3+h_1h_3+2h_1h_2$. The only decomposable bundle in such a family is $\cO_F(h_1+2h_3)\oplus\cO_F(2h_2+h_3)$, whose sections are trivially complete intersection curves inside $F$.
  
  Now we deal with the case $c_2=3h_2h_3+3h_1h_3+h_1h_2$. Again the above discussion shows that if such a vector bundle exists, then it is indecomposable. In order to show its existence we again work by deforming suitable curves on a hyperplane section of $F$ as we did in the previous section. Let $S:=H\cap F$ be a general hyperplane section of $F$. $S$ is a del Pezzo surface of degree $6$ in $\p6$. We know by the proof of Lemma \ref{lSurface-1} that the classes of $\ell$, $e_1$, $e_2$, $e_3$ in $A^2(F)$ are $h_2h_3+h_1h_3+h_1h_2$, $h_2h_3$, $h_1h_3$, $h_1h_2$ respectively. Using the same notations of Lemma \ref{lSurface} one easily checks that $S$ contains rational curves of degree $7$ and their classes are $3\ell-2e_3$, $4\ell-e_1-e_2-3e_3$ which are both in the class of $3h_2h_3+3h_1h_3+h_1h_2$.

  Take a smooth rational curve $C\subseteq S$ of degree $7$. We have $\cN_{C\vert S}\cong\cO_C(C)$ and $\cN_{S\vert F}\cong\cO_S(h S)$, thus the exact sequences
  \begin{gather*}
    0\longrightarrow \cO_C(C)\longrightarrow \cN_{C\vert F}\longrightarrow \cO_C(h)\longrightarrow 0,\\
    0\longrightarrow \cO_S\longrightarrow \cO_S(C)\longrightarrow \cO_C(C)\longrightarrow0,
  \end{gather*}
  Since $C\cong\p1$, it follows that $\cO_C(C)\cong\cO_{\p1}(5)$, $\cO_C(h)\cong\cO_{\p1}(7)$, thus $h^0\big(S,\cO_{S}(C)\big)=7$, $h^0\big(C,\cN_{E\vert F}\big)=14$,  $h^1\big(C,\cN_{C\vert F}\big)=0$. 

  Arguing as in the proof of Proposition \ref{pDef6-1} one checks the existence of a non--degenerate smooth rational curve $E$ whose class is  $3h_2h_3+3h_1h_3+h_1h_2$.
\end{proof}

\section{The intermediate cases}
\label{sIntermediate}
In this section we will show which other indecomposable, initialized, aCM bundles $\cE$ of rank $2$ are actually admissible besides the ones occurring in the extremal cases. We know that for the general section $s\in H^0\big(F,\cE\big)$ its zero--locus $E:=(s)_0$ has pure dimension $1$.

System \eqref{AlphaBeta}, Inequality \eqref{positivity} with $D=0$ and Formulas \eqref{c_1c_2-6-1}, \eqref{hc_2-6-1}  yield the following list (up to permutations). 
$$
\begin{array}{|c|c|c|c|c|}           \hline
  \text{Case} &{ (\alpha_1,\alpha_2,\alpha_3) } & {(\beta_1,\beta_2,\beta_3) } & { \deg(E) } & { p_a(E) } \\ \hline
  L&(0,0,2) & (0,0,0) & 0 & -1\\  \hline
  M&(0,0,1) & (1,0,0) & 1 & 0\\  \hline
  N&(0,1,1) & (1,0,0) & 1 & 0\\  \hline
  P&(0,1,2) & (1,0,0) & 1 & 0\\  \hline
  Q&(1,1,1) & (1,1,0) & 2 & 0\\  \hline
  R&(1,1,1) & (2,0,0) & 2 & 0\\  \hline
  S&(0,2,2) & (2,0,0) & 2 & -1\\  \hline
  T&(1,1,2) & (1,1,1) & 3 & 0\\  \hline
  U&(1,1,2) & (0,2,1) & 3 & 0\\  \hline
  V&(1,2,2) & (2,2,1) & 5 & 0\\  \hline
  W&(1,2,2) & (2,3,0) & 5 & 0\\  \hline
\end{array}
$$
\vskip0.5truecm
\noindent It is immediate to exclude the case L because $E\ne\emptyset$ due to Lemma \ref{lNonEmpty6-1}.

Let $\cE$ be in the list above. Then $\cE^\vee(th)$ is an indecomposable aCM bundle too. We take $t$ in such a way that it is also initialized. Thus $0\le 2t-\alpha_1\le2$ (see Proposition \ref{pUpper6-1}), hence $t=1$. We have 
\begin{gather*}
  c_1(\cE^\vee(h))=(2-\alpha_1)h_1+(2-\alpha_2)h_2+(2-\alpha_3)h_3,\\
  {\begin{align*}
      c_2(\cE^\vee(h))&=(\beta_1+2-(\alpha_2+\alpha_3))h_2h_3+\\
      &+(\beta_2+2-(\alpha_1+\alpha_3))h_1h_3+(\beta_3+2-(\alpha_1+\alpha_2))h_1h_2
    \end{align*}}
\end{gather*}
In case S, then $c_2(\cE^\vee(h))=0$, thus we can again exclude it. In case U (resp. W), then $c_2(\cE^\vee(h))=-h_2h_3+h_1h_3+h_1h_2$ (resp. $2h_1h_3-h_1h_2$). Due to the positivity of the $\beta_i$'s (see Formula \eqref{positivity}) also these two cases cannot occur. The same argument shows that cases T and V occur if and only if cases N and M occur respectively.

We now examine the remaining cases one by one, checking which of them actually occurs. We have to analyze cases M, N, P, Q, R.

\subsection{The case M (and V)}
In case M we know by the table that $E$ must be a line. 

Let $E\subseteq F$ be a line in the class $h_2h_3\in A^2(F)$. Since $\omega_E\cong\cO_E(-2h)$, it follows that $\det(\cN_{E\vert F})\cong\cO_E$ which 
thus coincides with $\cO_F(h_3)\otimes\cO_E$. Taking into account the vanishing $h^2\big(F,\cO_F(-h_3)\big)=0$, we thus know the existence of an exact sequence 
$$
0\longrightarrow \cO_F\longrightarrow \cE\longrightarrow\cI_{E\vert F}(h_3)\longrightarrow0.
$$
where $\cE$ is a vector bundle of rank $2$ on $F$ (see Theorem \ref{tSerre}). It is immediate to check that $\cE$ is initialized. We now check that $\cE$ is aCM. Trivially we have $h^1\big(F,\cE(th)\big)=h^1\big(F,\cI_{E\vert F}(th+h_3)\big)$.

The cohomology of Sequence \eqref{seqStandard} twisted by $\cO_F(th+h_3)$ gives rise to the exact sequence
$$
  H^0\big(F,\cO_F(th+h_3)\big)\longrightarrow H^0\big(E,\cO_E(th+h_3)\big)\longrightarrow H^1\big(F,\cI_{E\vert F}(th+h_3)\big)\longrightarrow0.
$$
The multiplication by $h_3$ gives rise to the commutative diagram
$$
\begin{CD}
  H^0\big(F,\cO_{F}(th)\big)@>\varphi_t>> H^0\big(E,\cO_E(th)\big)\\
  @VVV @V\psi_t VV\\
  H^0\big(F,\cO_{F}(th+h_3)\big)@>>> H^0\big(E,\cO_E(th+h_3)\big)\\
\end{CD}
$$
Since both $E$ and $F$ are aG, it is easy to check that $\varphi_t$ is surjective (see the proof of the analogous fact in Section \ref{sExtremal}). Moreover $E$ is in the class of $h_2h_3$, thus $\psi_t$ is an isomorphism. It follows that the map below is surjective, hence $h^1\big(F,\cI_{E\vert F}(th+h_3)\big)=0$.

Now we look at the vanishing of $H^2$. We know that we can also write the exact sequence
$$
0\longrightarrow \cO_F(-h_3)\longrightarrow \cE^\vee\longrightarrow\cI_{E\vert F}\longrightarrow0.
$$
We thus obtain
$$
h^2\big(F,\cE(th)\big)=h^1\big(F,\cE^\vee(-(2+t)h)\big)=h^1\big(F,\cI_{E\vert F}(-(2+t)h)\big).
$$ 
But the last number is the dimension of the cokernel of the map $\varphi_{-(2+t)}$ which is zero. We conclude that $\cE$ is aCM.

Assume that $\cE$ is decomposable. Thus it is the sum of two aCM invertible sheaves, one of them being initialized, the other one either initialized or without sections. Such sheaves are listed in Lemma \ref{lInvaCM}. Looking at $c_1$ the unique possibilities are then $\cO_F\oplus\cO_F(h_3)$, $\cO_F(-h_2)\oplus\cO_F(h_2+h_3)$, $\cO_F(-h_2-h_3)\oplus\cO_F(h_2+2h_3)$. But, in these cases, $c_2$ is $0$, $-h_2h_3$, $-3h_2h_3$ respectively. We conclude that $\cE$ is indecomposable.

In particular both the cases M and V are admissible.

\subsection{The case N (and T)}
In case N we know by the table that $E\subseteq F$ is a line in the class $h_2h_3\in A^2(F)$.  On the one hand we know that $\det({\cN_{E\vert F}})\cong\cO_F(h_2+h_3)\otimes\cO_E$. Since $h^2\big(F,\cO_F(-h_2-h_3)\big)=0$, it follows the existence of an exact sequence 
$$
0\longrightarrow \cO_F(-h_2-h_3)\longrightarrow \cE^\vee\longrightarrow\cI_{E\vert F}\longrightarrow0.
$$
where $\cE$ is a vector bundle of rank $2$ on $F$. Let $i=2,3$: there are $D_i\in\vert h_i\vert$ containing $E$, because $c_2h_i=0$. Thus $E=D_2\cap D_3$ necessarily. The associated Koszul complex is the exact sequence 
$$
0\longrightarrow \cO_F(-h_2-h_3)\longrightarrow \cO_F(-h_2)\oplus\cO_F(-h_3)\longrightarrow\cI_{E\vert F}\longrightarrow0.
$$

We have $h^1\big(F, \cO_F(-h_2-h_3)\big)=0$ (see Lemma \ref{lInvaCM}), hence the two sequences above must be isomorphic due to Theorem \ref{tSerre}. It follows that  $\cE\cong\cO_F(h_2)\oplus\cO_F(h_3)$. 

In particular both the cases N and T are not admissible in the sense that the sheaf $\cE$ exists, but it is decomposable.

\subsection{The case P}
Arguing as in the case N we check that $\cE\cong\cO_F(h_2+h_3)\oplus\cO_F(h_3)$. Hence this case is not admissible in the sense that the sheaf $\cE$ exists, but it is decomposable.

\subsection{The cases Q and R}
In these cases $E$ is a curve of degree $2$ of genus $0$. $E$ is either irreducible or the union of two concurrent lines, because $p_a(E)=0$, or a double line. 

In case Q only the first two cases are possible and we can easily construct the exact sequence 
$$
0\longrightarrow \cO_F(-h)\longrightarrow \cE^\vee\longrightarrow\cI_{E\vert F}\longrightarrow0
$$
where $\cE$ is a vector bundle of rank $2$ on $F$. 

We have $h_3c_2=0$: since $\vert h_3\vert$ is base--point--free, it follows the existence of $Q\in\vert h_3\vert$ containing $E$. $Q$ is a smooth quadric surface, $E$ is cut out on $Q$ by a hyperplane. Since $\cO_F(h)\otimes\cO_Q\cong\cO_F(h_1+h_3)\otimes\cO_Q$, it follows the existence of $D\in\vert h_1+h_2\vert$ such that $D\cap Q=E$. In particular we have a corresponding Koszul complex
$$
0\longrightarrow \cO_F(-h)\longrightarrow \cO_F(-h_1-h_2)\oplus\cO_F(-h_3)\longrightarrow\cI_{E\vert F}\longrightarrow0.
$$
As in the previous cases N and P, we deduce that $\cE\cong\cO_F(h_1+h_2)\oplus\cO_F(h_3)$. 

Let us examine case R. We claim that $E$ is not reduced. E.g., assume $E$ is the union of two distinct concurrent lines, say $L,M$, and let $p:=L\cap M$ be their intersection point. Thus there are $Q_i\in\vert h_i\vert$, $i=1,2$ such that $p\in Q_1\cap Q_2$. Since $LQ_i+MQ_i= E Q_i=0$, it would follow that $E=L\cup M\subseteq Q_1\cap Q_2$, hence its class inside $A^2(F)$ would be $h_2h_3$. But we know that its class is $c_2=2h_2h_3$, a contradiction. If $E$ is irreducible we can argue similarly.

Thus $E$ is a double structure on a line $E_{red}$ whose class in $A^2(F)$ is $h_2h_3$ necessarily. The general theory of double structures (see e.g. \cite{B--E}) gives us an exact sequence of the form
$$
0\longrightarrow \cC_{E_{red}\vert E}\longrightarrow \cO_E\longrightarrow \cO_{E_{red}}\longrightarrow 0.
$$
The conormal sheaf $\cC_{E_{red}\vert E}$ is an invertible sheaf on $E_{red}\cong\p1$, thus $\cC_{E_{red}\vert E}\cong\cO_{\p1}(-a)$. Moreover $a=1$, because $p_a(E)=0$ (see Section 2 of \cite{B--E}). 

Recall that the Hilbert scheme $\Gamma$ of lines in $F$ is the union of three components $\Gamma_i$, each of them isomorphic to $\p1\times\p1$ (Proposition 3.5.6 of \cite{I--P}). 

On the one hand, all the lines on $F$ in the class $h_2h_3\in A^2(F)$ are parameterized by the same copy of $\p1\times\p1$ and we denote it by $\Gamma_1$. 
The variety $F$ does not contain planes, hence the universal family $S_1\to\Gamma_1$ maps surjectively on $F$ (see Proposition 3.3.5 of \cite{I--P}). Lemma 3.3.4 of \cite{I--P} implies that the line $L$ corresponding to the  general point in $\Gamma_1$ satisfies $\cC_{L\vert F}\cong\cN_{L\vert F}^\vee\cong\cO_{\p1}^{\oplus2}$. Clearly the induced action of the automorphism group of $F$ on $\Gamma_1$ is transitive, hence there is an automorphism of $F$ fixing the class $h_2h_3$ and sending $E_{red}$ to $L$. We conclude that  $\cC_{E_{red}\vert F}\cong\cC_{L\vert F}\cong\cO_{\p1}^{\oplus2}$. 

On the other hand, the inclusion $E_{red}\subseteq E\subseteq F$ implies the existence of an epimorphism
$\cC_{E_{red}\vert F}\twoheadrightarrow \cC_{E_{red}\vert E}$. We obtain an epimorphism $\cO_{\p1}^{\oplus2}\twoheadrightarrow\cO_{\p1}(-1)$, clearly an absurd.

In particular both the cases Q and R are not admissible. In the former case the bundle $\cE$ exists, but it is decomposable, while in the latter $\cE$ does not exist at all.

We summarize the results of this section in the following result.
\begin{theorem}
\label{tIntermediate}
Let $\cE$ be an indecomposable initialized aCM bundle of rank $2$ on $F$ and let $c_1=\alpha_1h_1+\alpha_2h_2+\alpha_3h_3$ with $\alpha_1\le\alpha_2\le\alpha_3$. Assume that $c_1$ is neither $0$, nor $2h$, nor $h_1+2h_2+3h_3$. Then  $(c_1,c_2)$ is either $(h_3, h_2h_3)$, or $(h_1+2h_2+2h_3, 2h_2h_3+2h_1h_3+h_1h_2)$.

Conversely, for each such a pair, there exists an indecomposable, initialized, aCM bundle $\cE$ of rank $2$ on $F$ with Chern classes $c_1$ and $c_2$.

Moreover the zero--locus of a general section of $\cE$ is respectively a line or a, possibly reducible, quintic with arithmetic genus $0$. Each line on $F$ can be obtained in this way.
 \end{theorem}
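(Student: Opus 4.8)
The plan is to assemble the case analysis carried out in this section, supplying the Chern-class bookkeeping. For the classification direction I would first invoke Theorem A (Propositions \ref{pLower6-1}, \ref{pUpper6-1} and \ref{pCodimension}): excluding $c_1\in\{0,2h,h_1+2h_2+3h_3\}$ forces $0\le\alpha_i\le2$ and forces the divisorial part $D$ of $(s)_0$ to vanish, so $E:=(s)_0$ is a curve. Putting $D=0$ into \eqref{c_1c_2-6-1}, \eqref{hc_2-6-1}, the system \eqref{AlphaBeta} and the positivity \eqref{positivity} then leaves exactly the eleven lines $L,\dots,W$ of the table above.

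Next I would rule out cases. Case $L$ is impossible since $E\neq\emptyset$ by Lemma \ref{lNonEmpty6-1}. Then I would pass to $\cE^\vee(h)$: it is again an indecomposable aCM bundle, it is initialized because $t=1$ is the only integer with $0\le 2t-\alpha_1\le2$ (Proposition \ref{pUpper6-1}), and its general section again vanishes along a curve (Proposition \ref{pCodimension}), so \eqref{positivity} applies to it; computing $c_2(\cE^\vee(h))=c_2-c_1h+h^2$ and imposing $\beta_i\ge0$ for it discards $S,U,W$ and shows $T,V$ are admissible exactly when $N,M$ are. For $N,P,Q$ the explicit Koszul/Hartshorne--Serre computations of the subsections realize $\cE$ as a direct sum of two line bundles from Lemma \ref{lInvaCM}, contradicting indecomposability; in case $R$ the double-line argument (normal bundle of a line together with Proposition 3.5.6 of \cite{I--P}) shows no such $\cE$ exists. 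What survives is case $M$, hence also case $V$, yielding the two pairs $(c_1,c_2)=(h_3,h_2h_3)$ and, for the dual twist of a case-$M$ bundle, $(2h_1+2h_2+h_3,\,2h_2h_3+h_1h_3+2h_1h_2)=(h_1+2h_2+2h_3,\,2h_2h_3+2h_1h_3+h_1h_2)$ up to permutation of the $h_i$'s.

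For the converse I would recall the case-$M$ construction: from any line $E$ in the class $h_2h_3$ one gets $\det(\cN_{E\vert F})\cong\cO_E\cong\cO_F(h_3)\otimes\cO_E$, and $H^2(F,\cO_F(-h_3))=0$, so Theorem \ref{tSerre} produces a rank-$2$ bundle $\cE$ with $c_1=h_3$ and $(s)_0=E$, shown in the subsection to be initialized, aCM and indecomposable with $c_2$ the class $h_2h_3$ of $E$; case $V$ is then realized by $\cE^\vee(h)$. For the geometry of $E$, Lemma \ref{lInvariantE} gives $\deg(E)=hc_2$ and $p_a(E)=\tfrac12 c_1c_2-hc_2+1$, which equal $(1,0)$ in case $M$ and, via \eqref{c_1c_2-6-1} and \eqref{hc_2-6-1}, $(5,0)$ in case $V$; so $E$ is a line, respectively a quintic of arithmetic genus $0$. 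Since global generation is not guaranteed for every bundle with the case-$V$ invariants, the quintic is only claimed to be possibly reducible. Finally, every line on $F$ lies in one of the three classes $h_2h_3,h_1h_3,h_1h_2$ (Proposition 3.5.6 of \cite{I--P}), so applying the $S_3$-action on the factors of $F$ to the case-$M$ construction shows each line on $F$ is such a zero locus.

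The main obstacle is not a single hard computation but keeping the case-by-case bookkeeping airtight, and the most delicate point is the $\cE^\vee(h)$ elimination: one must verify that $\cE^\vee(h)$ really is an indecomposable, initialized aCM bundle whose general section vanishes along a curve, so that \eqref{positivity} legitimately constrains it, and that $c_2(\cE^\vee(h))$ is computed with the correct signs. A second point worth flagging is case $V$, where the zero locus may genuinely degenerate: the conclusion there should be stated as a possibly reducible arithmetic-genus-$0$ quintic and should rely only on Lemma \ref{lInvariantE}, which needs no smoothness but only the absence of embedded components.
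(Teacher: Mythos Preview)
Your proposal is correct and follows essentially the same approach as the paper: reduce to the eleven-line table via Theorem A and the formulas \eqref{c_1c_2-6-1}--\eqref{positivity}, kill $L$ by Lemma \ref{lNonEmpty6-1}, pass to $\cE^\vee(h)$ to eliminate $S,U,W$ and pair $T\leftrightarrow N$, $V\leftrightarrow M$, then dispatch $N,P,Q$ as decomposable and $R$ via the double-line normal-bundle argument, leaving $M$ (hence $V$) and its explicit Hartshorne--Serre construction. One small slip: case $S$ is not discarded by the inequalities $\beta_i'\ge0$ (there $c_2(\cE^\vee(h))=0$, so they hold trivially) but rather because $\cE^\vee(h)$ would then land in case $L$, i.e.\ its general section would have empty zero locus, contradicting Lemma \ref{lNonEmpty6-1}; the paper handles $S$ this way.
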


\bigskip
\noindent
Gianfranco Casnati,\\
Dipartimento di Scienze Matematiche, Politecnico di Torino,\\
c.so Duca degli Abruzzi 24,\\
10129 Torino, Italy\\
e-mail: {\tt gianfranco.casnati@polito.it}

\bigskip
\noindent
Daniele Faenzi,\\
UFR des Sciences et des Techniques, Universit\'e de Bourgogne,\\
9 avenue Alain Savary -- BP 47870\\
21078 DIJON CEDEX, France\\
e-mail: {\tt daniele.faenzi@univ-pau.fr}

\bigskip
\noindent
Francesco Malaspina,\\
Dipartimento di Scienze Matematiche, Politecnico di Torino,\\
c.so Duca degli Abruzzi 24,\\
10129 Torino, Italy\\
e-mail: {\tt francesco.malaspina@polito.it}

\enddocument
\begin{thebibliography}{44}

\bibitem{A--O}
  V. Ancona, G. Ottaviani: \emph{Some applications of Beilinson's theorem to projective spaces and quadrics}. Forum Math. \textbf{3} \rm{(1991)}, 157--176.

\bibitem{Ar}
  E. Arrondo: \emph{A home--made Hartshorne--Serre correspondence}. Comm. Algebra  \textbf{ 20} \rm (2007),  423--443.

\bibitem{A--C}
  E. Arrondo, L. Costa: \emph{Vector bundles on Fano $3$--folds without intermediate cohomology}. Comm. Algebra  \textbf{ 28} \rm (2000),  3899--3911.

\bibitem{A--F}
  E. Arrondo, D. Faenzi: \emph{Vector bundles with no intermediate cohomology on Fano threefolds of type $V_{22}$}. Pacific J. Math. \textbf{225} \rm (2006), 201--220.

\bibitem{A--G}
  E. Arrondo and B. Gra\~na: \emph{Vector bundles on $G (1, 4)$ without intermediate cohomology}. J. Algebra \textbf{214} \rm(1999), 128--142.

\bibitem{A--M}  
  E. Arrondo, F. Malaspina:
  \emph{Cohomological Characterization of vector bundles on Grassmannians of lines}. J. of Algebra \textbf{323} \rm(2010), 1098--1106.

\bibitem{B--E}
  D. Bayer, D. Eisenbud: \emph{Ribbons and their canonical embeddings}. Trans. Amer. Math. Soc. \textbf{347} \rm (1995), 719--756.

\bibitem{B--M2}  
  E. Ballico, F. Malaspina: \emph{Regularity and cohomological splitting conditions for vector bundles on multiprojective spaces}.  J. of Algebra \textbf{345} \rm(2011), 137--149.

\bibitem{B--F1}
  C. Brambilla, D. Faenzi: \emph{Moduli space of rank $2$ ACM bundles on prime Fano threefolds},  Michigan Math. J. \textbf{60},  \rm(2011), 113--148.

\bibitem{C--H1}
  L. Casanellas, R. Hartshorne: \emph{ACM bundles on cubic surfaces}. J. Eur. Math. Soc. \textbf{13} \rm (2011), 709--731.

\bibitem{C--H2}
  M. Casanellas, R. Hartshorne, F. Geiss, F.O. Schreyer: \emph{Stable Ulrich bundles}. Int. J. of Math. \text{23} 1250083 \rm(2012).

\bibitem{Ca}
G. Casnati: \emph{On rank two bundles without intermediate cohomology}. Preprint.

\bibitem{C--F--M1}
G. Casnati, D.Faenzi, F. Malaspina: \emph{Moduli spaces of rank two aCM bundles on the Segre product of three projective lines}. arXiv:1404.1188 [math.AG].

\bibitem{C--F}
L. Chiantini, D. Faenzi: \emph{Rank 2 arithmetically Cohen-Macaulay
  bundles on a general quintic surface}. Math. Nachr. \textbf{282}
\rm (2009),  1691--1708.

\bibitem{C--M1}
  L. Chiantini, C. Madonna: \emph{A splitting criterion for rank 2 bundles on a general sextic threefold}. Internat. J. Math. \textbf{15} \rm (2004), 341--359.

\bibitem{C--M2}
  L. Chiantini, C. Madonna: \emph{ACM bundles on general hypersurfaces in ${\mathbb P}^5$ of low degree}. Collect. Math. \textbf{56} \rm(2005), 85--96.

\bibitem{C--G--N}
  N. Chiarli, S. Greco, U. Nagel: \emph{On the genus and Hartshorne--Rao module of projective curves}.  Math. Z.  \textbf{ 229} \rm (1998),  695--724.

\bibitem{CoCoA}
CoCoATeam: {\em CoCoA: a system for doing Computations in Commutative Algebra}.
Available at http://cocoa.dima.unige.it.

\bibitem{C--K--M}
  E. Coskun, R.S. Kulkarni, Y. Mustopa: \emph{The geometry of Ulrich bundles on del Pezzo surfaces}.  J. Alg. \textbf{375} \rm(2013), 280--301.


\bibitem{C--M--P} 
  L. Costa, R.M. Mir\'{o}--Roig, J. Pons--Llopis:
  \emph{ The representation type of Segre varieties}. Adv. Math. \textbf{230} \rm(2012), 1995--2013.

\bibitem{E--S} D. Eisenbud, F.-O. Schreyer, J. Weyman: \emph{Resultants and Chow forms via exterior syzygies}.
  J. Amer. Math. Soc. \textbf{16} \rm (2003),  537--579. 

\bibitem{Fa1}
  D. Faenzi: \emph{Bundles over the Fano threefold $V_5$}. Comm. Algebra \textbf{33} \rm(2005), 3061--3080.

\bibitem{Fa2}
  D. Faenzi: \emph{Even and odd instanton bundles on Fano threefolds
    of Picard number one}.  Manuscripta Math. \textbf{144} \rm (2014), 199–239.

\bibitem{Fu}
  W. Fulton: {\em Intersection theory}.  Ergebnisse der Mathematik und ihrer Grenzgebiete, Springer \rm (1984).
  
  \bibitem{G--H}
  Ph. Griffiths, J. Harris: {\em Principles of algebraic geometry}. Wiley Classics Library, Wiley \rm(1994).

\bibitem{Ha1}
  R. Hartshorne: \emph{Varieties of small codimension in projective space}. Bull. Amer. Math. Soc. \textbf{80} \rm (1974), 1017-1032.

\bibitem{Ha2}
  R. Hartshorne: {\em Algebraic geometry}. G.T.M. 52, Springer \rm (1977).

\bibitem{I--P}
  V.A. Iskovskikh, Yu.G. Prokhorov: \emph{Fano varieties}. Algebraic Geometry V (A.N. Parshin and I.R. Shafarevich eds.), Encyclopedia of Mathematical Sciences  47, Springer, \rm (1999).

\bibitem{Kno}
H. Kn\"orrer: \emph{
Cohen-Macaulay modules on hypersurface singularities. I}.
Invent. Math. \textbf{88} \rm (1987), 153-164. 

\bibitem{Ma1}
  C. Madonna: \emph{A splitting criterion for rank $2$ vector bundles on hypersurfaces in $\bP^4$}. Rend. Sem. Mat. Univ. Pol. Torino. \textbf{ 56} \rm (1998), 43--54.

\bibitem{Ma2}
  C. Madonna: \emph{Rank-two vector bundles on general quartic hypersurfaces in $\mathbb P^4$}. Rev. Mat. Complut. \textbf{13} (2000), 287--301.

\bibitem{Ma3}
  C. Madonna: \emph{ACM vector bundles on prime Fano threefolds and complete intersection Calabi Yau threefolds}. Rev. Roumaine Math. Pures Appl. \textbf{ 47} \rm (2002), 211--222.

\bibitem{Ma4}
  C. Madonna: \emph{Rank $4$ vector bundles on the quintic threefold}. Cent. Eur. J. Math. \textbf{3} \rm (2005), 404--411.

\bibitem{Ma}
  H. Matsumura: \emph{Commutative ring theory}. Cambridge U.P., \rm (1980).

\bibitem{MK--R--R1}
   N. Mohan Kumar, A. P. Rao, G. V. Ravindra: \emph{Arithmetically Cohen-Macaulay bundles on three dimensional hypersurfaces}. Int. Math. Res. Not. \rm(2007).

\bibitem{MK--R--R2}
 N. Mohan Kumar, A. P. Rao, G. V. Ravindra: \emph{Arithmetically Cohen-Macaulay bundles on hypersurfaces}. Comment. Math. Helv. \textbf{82} \rm(2007), 829–843.

\bibitem{Mu}
  D. Mumford: \emph{Lectures on curves on an algebraic surface. With a section by G. M. Bergman}. Annals of Mathematics Studies, No. 59 Princeton University Press, \rm(1966).

\bibitem{O--S--S}
  C. Okonek, M. Schneider, H. Spindler: \emph{ Vector bundles on complex projective spaces}. Progress in Mathematics 3, \rm(1980).

\bibitem{Ot}
  G. Ottaviani: \emph{Some extensions of Horrocks criterion to vector bundles on Grassmannians and quadrics}. Ann. Mat. Pura Appl. \textbf{ 155} \rm(1989), 317--341.

\bibitem{Ul}
B. Ulrich: \emph{Gorenstein rings and modules with high number of generators}. Math. Z. \textbf{188} \rm (1984) 23--32.

\bibitem{Vo}
  J.A. Vogelaar: {\em Constructing vector bundles from codimension-two subvarieties}, PhD Thesis.



\end{thebibliography}
